\newcommand{\abs}[1]{\left\lvert#1\right\rvert}
\newcommand{\norm}[1]{\left\lVert#1\right\rVert}
\newcommand{\Z}{\mathbb Z}
 \newcommand{\R}{\mathbb R}
\newcommand{\C}{\mathbb C}
 \newcommand{\RR}{\mathbb R}
\newcommand{\CC}{\mathbb C}
\newcommand{\id}{\mathbbm1} \newcommand{\mathopf}[1]{{\operatorfont
    #1}}
\newcommand{\ee}{\mathopf e}
\newcommand{\SU}{\mathopf{SU}}
\newcommand{\U}{\mathopf{U}}
\newcommand{\su}{\mathfrak{su}} \newcommand{\LSL}{\mathfrak{sl}}
\newcommand{\diag}{\mathopf{diag}}
\let\setthat=\sett \newcommand{\quot}[3][\!]{\mathchoice%
  {\left.\raisebox{.2em}{$\displaystyle#2$}#1\middle/#1%
      \raisebox{-.2em}{$\displaystyle#3$}\right.}%
  {\left.\raisebox{.2em}{$#2$}#1\middle/#1%
      \raisebox{-.2em}{$#3$}\right.}%
  {\left.\raisebox{.08em}{$\scriptstyle#2$}#1\middle/#1%
      \raisebox{-.08em}{$\scriptstyle#3$}\right.}%
  {\left.\raisebox{.05em}{$\scriptscriptstyle#2$}#1\middle/#1%
      \raisebox{-.05em}{$\scriptscriptstyle#3$}\right.}%
}
\newcommand{\eps}{\varepsilon}
 \newcommand{\End}{\mathopf{End}}
\newcommand{\real}{\mathopf{Re}} 
\newcommand{\LG}{\mathfrak{g}} \newcommand{\LT}{\mathfrak{t}}
 \newcommand{\h}{h}
\newcommand{\dist}{\mathopf{dist}}
\DeclareMathOperator{\Ad}{Ad} 
\DeclareMathOperator{\ann}{ann} \DeclareMathOperator{\aff}{aff}
 \DeclareMathOperator{\rank}{rank}
\newcommand{\J}{\mathbf{J}} \newcommand{\I}{\mathopf{i}}
\newcommand{\e}{\mathopf{e}} 
\newcommand{\der}{\mathopf{d}} 
\newcommand\aug{\fboxsep=-\fboxrule\!\!\!\fbox{\strut}\!\!\!}
\newtheorem{theorem}{Theorem}[section]
\newtheorem{lemma}[theorem]{Lemma}
\newtheorem{corollary}[theorem]{Corollary}
\theoremstyle{definition} \newtheorem{definition}[theorem]{Definition}
\newtheorem{example}[theorem]{Example}
\newtheorem{remark}[theorem]{Remark}
\title{$G$-isotropy of $T$-relative equilibria within manifolds tangent to spaces with
  linearly independent weights}
\author{Mara Sommerfeld
  \\\href{mailto:mara.sommerfeld@uni-hamburg.de}{mara.sommerfeld@uni-hamburg.de}
\\\href{https://orcid.org/0000-0001-6194-8274}{https://orcid.org/0000-0001-6194-8274}}
\affil{Department of Mathematics, Universit\"at Hamburg,\\ Bundesstraße
  55, 20146 Hamburg, Germany}
\date{}
\begin{document}
\maketitle
\begin{abstract}
  We investigate the generic local structure of relative equilibria in
  Hamiltonian systems with symmetry $G$ near a completely symmetric
  equilibrium, where $G$ is compact and connected. Fix a maximal torus
  $T \subset G$ and identify the equilibrium with the origin within a
  symplectic representation of $G$. By a previous result, generically,
  for each $\xi \in \LT$ such that $V_0:= \ker \der^2(h-\J^\xi)(0)$
  has linearly independent weights, there is a manifold tangent
  to $V_0$ that consists of relative equilibria with generators in
  $\LT$. Here we determine their isotropy with respect to $G$. The
  main result asserts that for each of these manifolds of $T$-relative
  equilibria, there is a
  local diffeomorphism to its tangent space at $0$ that preserves the
  isotropy groups. We will then deduce that the $G$-orbit of the union
  of these manifolds is stratified by isotropy type. The stratum given
  by the relative equilibria of type $(H)$ has the dimension $\dim G
  -\dim H + \dim (\LT')^L$, where $\LT' \subset \LT$ is the orthogonal
  complement of $\mathfrak h \cap \LT$ and $L$ is the minimal adjoint
  isotropy subgroup of an element of $\LT$ with $H \subset L$. In the
  end, we consider some examples of these manifolds of $T$-relative
  equilibria that contain points with the same isotropy
  type with respect to the $T$-action but different isotropy type with
  respect to $G$.
\end{abstract}

\section*{Introduction}
Symmetries play a central role in the description of physical
systems. In classical mechanics, we usually consider actions with
compact isotropy groups of rank 1, but in non-classical physics we
often have to deal with groups of higher rank. For instance in gauge
theories, the symmetries of the vacuum state are typically products of
special unitary groups.

In my thesis \cite{thesis}, I investigated classical
finite-dimensional Hamiltonian systems with proper group actions, but
with connected isotropy groups of rank greater than 1. Even though
physicists usually consider infinite dimensional systems, this could
be a step to a better understanding of the dynamics caused by high
dimensional symmetries.

The main objects of interest of \cite{thesis} are \emph{relative
  equilibria}: states whose trajectories are contained in their group
orbit. In classical mechanics, they often correspond to solutions of
constant shape. If the physical system is modeled by a gauge system,
relative equilibria describe stationary physical states.

One of the main results of \cite{thesis} is of local nature: We
consider an isolated equilibrium with connected isotropy group $G$ and
investigate the generic structure of relative equilibria in a small
neighborhood of the equilibrium. If the equilibrium is a minimum of
the Hamiltonian, this means that we only consider low energy
states. Since we are only interested in the local structure, it
suffices to consider a $G$-representation $V$ with a $G$-invariant
symplectic form and a $G$-invariant Hamiltonian function $h$ with a
critical point at the origin.

The symplectic form induces a $G$-invariant complex structure on the
center space $V^c$ of $\der X_h(0)$, which is considered as a complex
representation.

The approach to find relative equilibria is to fix a maximal torus $T$
of $G$ and to search for invariant $T$-orbits. Due to the conjugation
theorem, their $G$-orbits form the set of relative equilibria with
respect to the $G$-action.

As discussed in \cite[Section 5.2.2]{thesis}, the equivariant
Weinstein-Moser theorem, introduced by Montaldi, Roberts, and Stewart
\cite{equivweinsteinmoser} already implies that generically for each
irreducible subrepresentation $W$ of $V^c$ and each weight that occurs
with multiplicity 1 in $W$ there is a local manifold of invariant
$T$-orbits that is at the origin tangent to the corresponding weight
space in $W$. Alternatively this follows from a theorem by Ortega and
Ratiu \cite[Theorem 4.1]{Ortega2004RelativeEN}.

The main observation in \cite{thesis} is that, in the case
$\rank(G)>1$, also for some particular sums of weight spaces there are
manifolds of invariant $T$-orbits whose tangent spaces at the origin
coincide with these sums: Generically, we obtain such a manifold for
each kernel of the form $\ker \der^2 (h-\J^\xi)(0)$ that has only
linearly independent weights that occur with multiplicity 1. (Here
$\J^\xi$ denotes the momentum map $\J$ evaluated at $\xi \in \LT$.)
Moreover, the union of these manifolds forms a Whitney-stratified
set. The proof of this statement is based on an application of
equivariant transversality theory, which was developed independently
by Bierstone \cite{Bierstone1977GeneralPO,Bierstone1977GenericEM} and
Field \cite{Field1977StratificationsOE, Field1977TransversalityI} in
the 70's of the last century. The method is very similar to Field and
Richardson's approach to equivariant bifurcation theory (see
\cite{Field1989SymmetryBA, Field1991LocalSO, field1996symmetry}).

This article deals with the computation of the isotropy types of these
relative equilibria. The isotropy type is not only an interesting
property of the solution but also gives information about the overall
structure of relative equilibria, since the $G$-orbits of these
manifolds consist of relative equilibria.

Moreover, we will need to compute the isotropy types of the generators
and momenta, which also play a key role in the description of the
structure of relative equilibria in Hamiltonian systems: Patrick and
Roberts \cite{Patrick2000TheTR} have shown that the relative
equilibria of Hamiltonian systems with a compact connected symmetry
group $G$ that acts freely are generically stratified by the isotropy
type $(K):=(G_\mu\cap G_\xi)$ of their momentum generator pair
$(\mu, \xi)$ with respect to the sum of the coadjoint and adjoint
action on $\LG^*\oplus \LG$. The corresponding stratum has the
dimension $\dim G + 2\dim Z(K) -\dim K$, where $Z(K)$ is the center of
$K$.  To prove this, they define a generic transversality
condition. As pointed out in \cite{thesis}, this condition is a
special case of a transversality condition that is natural in terms of
equivariant transversality theory and is also valid for non-free
actions. This implies that Patrick and Robert's result generically
holds for the action of the group $\left(\quot{N(H)}{H}\right)^\circ$
on the set of points with isotropy group $H$.

We will see that an application of this theory shows that the
$G$-orbit of the set formed by the manifolds of $T$-relative
equilibria tangent to the
kernels of the form $\ker \der^2(h-\J^\xi)(0)$, $\xi \in \LT$, with
linearly independent weights is
stratified by isotropy type. If $H \subset G$ is an isotropy subgroup,
then the stratum of the relative equilibria with isotropy type $(H)$
has the dimension
\begin{equation*}
  \dim G - \dim H + \dim(\LT')^L,
\end{equation*}
where $\LT'$ denotes the orthogonal complement of $\LT \cap \mathfrak
h$ in $\LT$ and $L = L(H)$ is the minimal isotropy subgroup of an
element of $\LT$ with respect to the adjoint action that contains $H$.

The main result to determine the isotropy subgroups asserts that for each of
these manifolds there is a diffeomorphism to a neighborhood of $0$
within its tangent space at the origin which preserves the isotropy
groups. Thus the problem is reduced to a problem in representation
theory.

This justifies the approach of \cite{thesis}: The main idea is to
investigate the structure of the $T$-relative equilibria since they
are contained in every $G$-orbit of relative equilibria. At first
glance, the method seems to have a drawback since we forget
information about the $G$-action in between and perform a
Lyapunov-Schmidt reduction with only $T$-invariant spaces. The results
presented in this article show that anyhow the $G$-isotropy groups are
preserved.

In the end, we will also illustrate an approach to compute the
isotropy types on the Lie algebra level in the relevant subspaces of a
given $G$-representation and discuss some examples. A striking
observation is that points of the same stratum of $T$-relative
equilibria can have different isotropy types with respect to the
$G$-action (so the $G$-orbit of the stratum is not necessarily a
manifold) and their isotropy groups can even be greater than the
isotropy groups of the corresponding weight spaces. (If we call
the weight vectors \emph{pure states} and points with nonzero
components in weight spaces corresponding to different weight
\emph{mixed states}, then the mixed states can have more symmetry than
each of their pure components).

\section{Preliminaries}

\subsection{Notation corresponding to the group action}
Throughout, let $G$ denote a compact connected Lie group and $\LG$ be its
Lie algebra. If $P$ is a space with a $G$-action and $p \in P$, then
the \emph{isotropy subgroup} of $p$ is given by
$G_p = \setthat{g \in G}{gp = p}$. Its conjugacy class is the
\emph{isotropy type} of $p$, denoted by $(G_p)$. The Lie algebra
$\LG_p$ of $G_p$ is called the \emph{isotropy Lie algebra} of $p$. Two
points $p$ and $q$ have the same \emph{type on the Lie algebra level}
if there is $g \in G$ with $\Ad_g(\LG_p)= \LG_q$, where $\Ad$ denotes
the adjoint action of $G$ on $\LG$, given by
  \begin{equation*}
     \Ad_gx := \frac \der {\der t} g\exp (t\xi)g^{-1}\big |_{t=0}.
  \end{equation*}

  For a subgroup $H \subset G$ and a subset $M \subset P$, the set of
  fixed points of $H$ within $M$ is denoted by
  $M^H = \setthat{m \in M}{hm =m \; \forall h \in H}$, the subset of
  points of isotropy type $(H)$ by
  $M_{(H)}=\setthat{m \in M}{G_m \in (H)}$ and the subset of points
  with isotropy subgroup $H$ by $M_H = M^H \cap M_{(H)}$.

\subsection{Notation and facts from representation theory}
Here we provide basic definitions and facts from the representation
theory of compact connected Lie groups, for details see for instance
\cite{btd} or \cite{hall2003lie}.

We will later on
regard symplectic representations as complex representations. Thus we
consider complex representations in this section.

A key idea in representation theory and as well, in our approach to
find relative equilibria, is to fix a maximal torus $T \subset G$ and
to consider a given finite-dimensional complex representation $V$ as a
$T$-representation.

Every group element is contained in a maximal torus, and all maximal
tori are conjugated to $T$. Correspondingly, each element of the Lie algebra
$\LG$ of $G$ is contained in the Lie algebra of a maximal torus, and
every Lie algebra of a maximal torus coincides with the image of the
Lie algebra $\LT$ of $T$ under the adjoint action of an element
$g \in G$.

As a $T$-representation, $V$ splits into the $T$-irreducible
subrepresentations, the \emph{weight spaces}. Each weight space is
1-dimensional and of the following form: Consider the exponential map
$\exp: \LT \to T$. If we use the identification $T \cong \quot
{\R^n}{\Z^n}$, then $\exp$ coincides with the projection $\R^n \to
\quot{\R^n}{\Z^n}$. An element $\alpha \in \LT^*$ is called an
\emph{integral form} iff $\alpha$ maps $\ker(\exp)$ to $\Z$. (When
we identify $\LT^*\cong (\R^n)^*$ with $\R^n$ via the standard inner
product, then the set of integral forms coincides with $\Z^n$.) Every
integral form $\alpha$ defines a $T$-representation on $\C$, which we
denote by $\C_\alpha$, via
\begin{equation*}
  T\ni \exp(\xi) \mapsto \ee^{2\pi \I \alpha(\xi)} \in \U(1).
\end{equation*}
These are exactly the irreducible complex $T$-representations. If
$\C_\alpha$ is a weight space of $V$, then $\alpha$ is the
corresponding \emph{weight}. Any non-zero element of $\C_\alpha$ is
called a \emph{weight vector} corresponding to $\alpha$.  (Note that
this definition of weights equates to \emph{real infinitesimal
  weights} or \emph{real weights} in \cite{btd} and
\cite{hall2003lie}.)

Of particular importance are the weights of the complexified adjoint
action
\begin{align*}
  G \times (\LG \otimes \C) &\to \LG \otimes \C\\
  (g, \xi\otimes z) &\mapsto (\Ad_g\xi)\otimes z,\\
\end{align*}
which are called \emph{roots}. Accordingly, the weight spaces and
weight vectors of $\LG\otimes \C$ are called \emph{root spaces} and \emph{root
  vectors} respectively. The roots occur in pairs
$\pm \rho \in \LT^*$.  As in \cite{btd}, we denote the root space
corresponding to $\rho$ by $L_\rho$ and the space
$(L_\rho\oplus L_{-\rho}) \cap \LG$ corresponding to $\pm \rho$ within
the real Lie algebra $\LG$ by $M_\rho = M_{-\rho}$. If $R^+$ is a
subset of the set of roots $R$ that contains exactly one element of
each pair $\pm \rho$, then
\begin{equation*}
  \LG = \LT \oplus \bigoplus_{\rho \in R^+} M_\rho.
\end{equation*}
The kernels of the roots are hyperplanes in $\LT$, called \emph{Weyl
  walls}. The connected components of
$\LT \setminus \left(\bigcup U_{\rho \in R^+} \ker \rho\right)$ are
called \emph{Weyl chambers}, their closures \emph{closed Weyl
  chambers}.  The orthogonal reflections about the Weyl walls with
respect to an adjoint-invariant inner product on $\LG$ restricted to
$\LT$ generate the Weyl group $W = W(G):= \quot {N(T)}{T}$. The
$W$-action on $\LT$ coincides with the one which is induced by the
adjoint action restricted to $N(T) \times \LT$, see \cite[Chapter V,
(2.19)]{btd}.

Each Weyl group orbit intersects every closed Weyl chamber in exactly
one point.  The weights of a $G$-representation $V$ are
$W$-invariant. If we fix a particular closed Weyl chamber $C$, each
Weyl group orbit of weights of $V$ has hence a unique element in $C$.
We call a weight $\alpha \in C$ \emph{higher} than a weight
$\beta \in C$ iff $\beta$ is contained in the convex hull of the Weyl
group orbit of $\alpha$. Then the irreducible $G$-representations are
in $1$-to-$1$-correspondence with the integral elements $\lambda$ of
$C$: For each $\lambda$ there is a unique irreducible finite
dimensional complex $G$-representation with highest weight $\lambda$.
Moreover, the Weyl walls determine the adjoint isotropy subgroups of
the elements of $\LT$:
\begin{lemma}\label{lem:adjoint_isotropy_connected}
  Isotropy subgroups with respect to the adjoint action are connected.
\end{lemma}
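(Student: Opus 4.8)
The plan is to reduce the statement to two classical facts about a compact connected Lie group $G$: that every element lies in a maximal torus, and that the centralizer of a torus is connected. Since the adjoint isotropy subgroup of $\xi \in \LG$ is $G_\xi = \sett{g \in G}{\Ad_g\xi = \xi}$, the first step is to recognize it as such a centralizer. From $g\exp(t\xi)g^{-1} = \exp\big(t\,\Ad_g\xi\big)$ one sees that $\Ad_g\xi = \xi$ holds if and only if $g$ commutes with the whole one-parameter subgroup $\exp(\R\xi)$, equivalently with its closure $S:=\overline{\exp(\R\xi)}$; differentiating at $t=0$ gives the converse implication. As $S$ is a compact connected abelian subgroup of $G$, it is a torus, and we have shown $G_\xi = Z_G(S)$. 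So it suffices to prove that $Z_G(S)$ is connected for an arbitrary torus $S \subset G$.

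The heart of the argument is then to show that every $g \in Z_G(S)$ lies together with $S$ in a common maximal torus. Since $g$ centralizes $S$, the subgroup generated by $g$ and $S$ is abelian; let $A$ be its closure, a compact abelian Lie group. Because $S$ is connected, $S \subseteq A^\circ$, and since $A$ has only finitely many components while the image of $g$ generates a dense — hence all — of $A/A^\circ$, the group $A/A^\circ$ is finite cyclic. The torus $A^\circ$ is divisible, so the extension $1 \to A^\circ \to A \to A/A^\circ \to 1$ splits, giving $A \cong A^\circ \times \Z/m$; such a group is monothetic, i.e.\ $A = \overline{\langle a\rangle}$ for a suitable $a \in A$. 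Now $a$ lies in some maximal torus $T'$ of $G$, and since $T'$ is closed we get $A = \overline{\langle a\rangle} \subseteq T'$, whence $g \in T'$ and $S \subseteq T'$.

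To conclude: $T'$ is connected and abelian and contains $S$, so $T' \subseteq Z_G(S)$, and therefore $g$ lies in the identity component of $Z_G(S)$. As $g$ was arbitrary, $Z_G(S)$ is connected, and by the first step so is every adjoint isotropy subgroup $G_\xi$.

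The only genuinely delicate point is the claim that $A$ is monothetic — equivalently, the simultaneous placement of $g$ and $S$ in one maximal torus; the rest is bookkeeping together with invocations of the standard structure theory of compact connected Lie groups (maximal tori, $\exp$ surjective, the Weyl-theoretic material of Bröcker--tom Dieck cited above). If one prefers to shorten the write-up, the paragraph producing the common maximal torus can be replaced by a direct citation of the theorem that centralizers of tori in compact connected Lie groups are connected, keeping only the first reduction step $G_\xi = Z_G(S)$.
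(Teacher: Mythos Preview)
Your proof is correct. The paper's own proof, however, is a single sentence: it cites \cite[Chapter IV, (2.3)(ii)]{btd}, which states directly that the adjoint isotropy group $G_\xi$ equals the union of all maximal tori of $G$ containing $\xi$; connectedness is then immediate since each maximal torus is connected and contains the identity.

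Your approach is genuinely different in presentation though not in substance. You first rewrite $G_\xi$ as the centralizer $Z_G(S)$ of the torus $S=\overline{\exp(\R\xi)}$, and then prove from scratch that every $g\in Z_G(S)$ lies, together with $S$, in some maximal torus $T'$ --- which is precisely the content of the cited Br\"ocker--tom Dieck result, just phrased for the torus $S$ rather than the element $\xi$. The monothetic-group argument you give (splitting $A\cong A^\circ\times\Z/m$ and finding a topological generator) is a clean and standard way to establish this. What your version buys is self-containedness: a reader without \cite{btd} at hand can follow it completely. What the paper's version buys is brevity appropriate to a lemma that is purely auxiliary in context. You yourself note in the final paragraph that the heart of your argument could be replaced by exactly such a citation; the paper simply takes that option.
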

\begin{proof}
  Consider $\xi \in \LG$. By \cite[Chapter IV, (2.3)(ii)]{btd} the
  adjoint isotropy group $G_\xi$ coincides with the union of the
  maximal tori that contain $\xi$.
\end{proof}
Thus, the adjoint isotropy groups are determined by their Lie algebras.
\begin{lemma}[Infinitesimal formulation of {\cite[Chapter
    V,(2.3)(ii)]{btd}}] \label{lem:BTD}
  The adjoint isotropy Lie algebra of $\xi \in \LT$ is given by
  \begin{equation*}
    \LG_\xi = \LT \oplus \bigoplus_{\rho \in R^+, \xi \in \ker \rho} M_\rho.
  \end{equation*}
\end{lemma}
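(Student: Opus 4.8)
The plan is to identify $\LG_\xi$ with the centralizer of $\xi$ in $\LG$ and then compute the latter via the root space decomposition. Since $G_\xi = \setthat{g \in G}{\Ad_g \xi = \xi}$, differentiating the adjoint action at $\xi$ shows that its Lie algebra is $\LG_\xi = \setthat{\eta \in \LG}{[\eta,\xi] = 0} = \ker(\ad_\xi)$. As $\LT$ is abelian and $\xi \in \LT$, we have $\LT \subseteq \LG_\xi$, so only the contributions of the root spaces $M_\rho$ remain to be analyzed.

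First I would complexify. The operator $\ad_\xi$ is a real endomorphism of $\LG$, so $\ker(\ad_\xi)$ is the real form of $\ker(\ad_\xi \otimes \C)$ inside $\LG \otimes \C$. On the complexification we have $\LG \otimes \C = (\LT \otimes \C) \oplus \bigoplus_{\rho \in R} L_\rho$, and on each root space $L_\rho$ the adjoint action of $\eta \in \LT$ is multiplication by the scalar $2\pi\I\,\rho(\eta)$; this is obtained by differentiating the defining $T$-action $\exp(\eta) \cdot X = \ee^{2\pi\I\rho(\eta)} X$ on $L_\rho$ in the direction $\eta$. Hence $L_\rho \subseteq \ker(\ad_\xi \otimes \C)$ if and only if $\rho(\xi) = 0$, i.e.\ $\xi \in \ker\rho$, and therefore $\ker(\ad_\xi \otimes \C) = (\LT \otimes \C) \oplus \bigoplus_{\rho \in R,\, \xi \in \ker\rho} L_\rho$.

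It remains to intersect with $\LG$. Since $\xi \in \ker\rho$ if and only if $\xi \in \ker(-\rho)$, the set of roots vanishing on $\xi$ is stable under $\rho \mapsto -\rho$, and complex conjugation permutes the summands $L_\rho \leftrightarrow L_{-\rho}$ and fixes $\LT \otimes \C$; so the real points decompose summandwise, and using $M_\rho = (L_\rho \oplus L_{-\rho}) \cap \LG$ we obtain $\LG_\xi = \ker(\ad_\xi \otimes \C) \cap \LG = \LT \oplus \bigoplus_{\rho \in R^+,\, \xi \in \ker\rho} M_\rho$, as claimed. I do not expect a genuine obstacle here; the only points needing care are the normalization of the infinitesimal root action (the factor $2\pi\I$, matching the ``real infinitesimal weight'' convention fixed above) and the elementary fact that kernels of real operators commute with complexification, so that the real centralizer is correctly recovered from the complex one. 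Alternatively, one could simply differentiate the cited group statement \cite[Chapter V, (2.3)(ii)]{btd} at the identity, invoking Lemma~\ref{lem:adjoint_isotropy_connected} to ensure that no components are lost; but the computation above is self-contained.
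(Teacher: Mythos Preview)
Your proof is correct and is the standard argument via complexification and the root space decomposition. The paper, however, does not actually give a proof of this lemma at all: it simply records the statement as the ``infinitesimal formulation'' of \cite[Chapter V, (2.3)(ii)]{btd} and moves on, treating it as a known fact from the literature. Your closing remark about differentiating the cited group-level statement is in fact exactly the route the paper implicitly takes by labeling the lemma this way; your self-contained computation is a welcome addition rather than a deviation.
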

In other words: The isotropy subspaces of the adjoint action
intersected with $\LT$ are given by the intersections of Weyl walls.
\begin{corollary}\label{cor:intersection-groups}
  Let $K$ and $L$ be isotropy subgroups of points in $\LT$ with
  respect to the adjoint action, and let $\LT^K$ and $\LT^L$ be their
  fixed point subspaces within $\LT$ respectively. Then $K\cap L$ is
  also an isotropy of the adjoint action and its fixed point subspace
  within $\LT$ is given by the intersection of $\LT$ and all Weyl walls that
  contain both $\LT^K$ and $\LT^L$.
\end{corollary}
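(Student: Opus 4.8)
The plan is to reduce the statement to the Lie algebra level and then to the root data, exploiting Lemmas~\ref{lem:adjoint_isotropy_connected} and~\ref{lem:BTD}. Since $K$ and $L$ are adjoint isotropy subgroups of points in $\LT$, I would pick $\xi,\eta \in \LT$ with $G_\xi = K$ and $G_\eta = L$, and then look for a single element $\zeta \in \LT$ whose adjoint isotropy subgroup is exactly $K \cap L$, the natural candidate being $\zeta = \xi + t\eta$ for a well-chosen scalar $t \neq 0$.

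The key point is the choice of $t$: the set of roots is finite, so there are only finitely many values of $t$ for which some root $\rho$ satisfies $\rho(\xi) + t\,\rho(\eta) = 0$ without already having $\rho(\xi) = \rho(\eta) = 0$. Fixing $t$ outside this finite set yields $\rho(\zeta) = 0$ if and only if $\rho(\xi) = 0$ and $\rho(\eta) = 0$, whence by Lemma~\ref{lem:BTD} $\LG_\zeta = \LG_\xi \cap \LG_\eta$. Now $K \cap L = G_\xi \cap G_\eta$ fixes $\zeta$, so $K \cap L \subseteq G_\zeta$; conversely, $G_\zeta$ is connected by Lemma~\ref{lem:adjoint_isotropy_connected} and hence generated (as a group) by $\exp \LG_\zeta = \exp(\LG_\xi \cap \LG_\eta) \subseteq G_\xi \cap G_\eta$, giving $G_\zeta \subseteq K \cap L$. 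Thus $K \cap L = G_\zeta$ is an adjoint isotropy subgroup of a point in $\LT$, as claimed.

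For the fixed point subspace I would argue as follows. An element $\eta' \in \LT$ lies in $\LT^{K\cap L}$ iff $G_\zeta \subseteq G_{\eta'}$, which by Lemmas~\ref{lem:adjoint_isotropy_connected} and~\ref{lem:BTD} is equivalent to $\LG_\zeta \subseteq \LG_{\eta'}$, i.e.\ to $\rho(\eta') = 0$ for every $\rho \in R^+$ with $\rho(\zeta) = 0$; hence $\LT^{K \cap L} = \bigcap_{\rho \in R^+,\ \rho(\zeta) = 0} \ker \rho$. It remains to identify this family of Weyl walls. Since $\xi \in \LT^K$ and, by the same reasoning applied to $\xi$, $\LT^K = \bigcap_{\rho(\xi)=0} \ker\rho$, a Weyl wall $\ker \rho$ contains $\LT^K$ if and only if $\rho(\xi) = 0$, and likewise $\ker\rho \supseteq \LT^L$ iff $\rho(\eta) = 0$. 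Combined with the equivalence $\rho(\zeta) = 0 \Longleftrightarrow \rho(\xi) = \rho(\eta) = 0$ established above, the Weyl walls containing both $\LT^K$ and $\LT^L$ are precisely the $\ker\rho$ with $\rho(\zeta) = 0$, so their intersection is $\LT^{K\cap L}$. (The intersection with $\LT$ in the statement simply covers the degenerate case $K \cap L = T$, in which no Weyl wall contains both subspaces and the empty intersection is read as all of $\LT$; this is consistent, since a common wall $\ker\rho$ would force $M_\rho \subseteq \LG_\zeta$ and hence $G_\zeta \supsetneq T$.)

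The argument is in the end bookkeeping with roots, and I expect no serious obstacle beyond handling these degenerate configurations. The one step that requires genuine care is the passage from the Lie algebra identity $\LG_\zeta = \LG_\xi \cap \LG_\eta$ to the group identity $K \cap L = G_\zeta$: it relies essentially on connectedness of adjoint isotropy subgroups (Lemma~\ref{lem:adjoint_isotropy_connected}) and on the fact, secured by the choice of $t$, that $K \cap L$ is actually realized as the isotropy of a \emph{single} element of $\LT$.
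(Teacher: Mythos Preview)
Your proof is correct and follows essentially the same route as the paper: construct $\zeta=\xi+t\eta$ with a generic scalar so that $\rho(\zeta)=0$ exactly when $\rho(\xi)=\rho(\eta)=0$, apply Lemma~\ref{lem:BTD} to get $\LG_\zeta=\LG_\xi\cap\LG_\eta$, and use connectedness (Lemma~\ref{lem:adjoint_isotropy_connected}) to pass to groups. The only cosmetic difference is that the paper picks $\eps$ small enough rather than avoiding a finite bad set, and your treatment of the fixed-point subspace claim is in fact more explicit than the paper's.
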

\begin{proof}
 Choose $\xi_K, \xi_L \in \LT$ with $G_{\xi_K}= K$ and $G_{\xi_L} =
 L$. Choose $\eps > 0$ small enough such that $\xi:=\xi_K+ \eps \xi_L$ is
 not contained in any Weyl wall that does not contain $\xi_K$. Then
 $\xi$ is contained exactly in the Weyl walls that
 contain $\xi_K$ and $\xi_L$. Thus by Lemma~\ref{lem:BTD} $\LG_\xi =
 \mathfrak k \cap \mathfrak l$, where $ \mathfrak k$ and $\mathfrak l$
 denote the Lie algebras of $K$ and $L$ respectively. Since $G_\xi$ is
 connected, we obtain
 \begin{equation*}
   G_\xi = G_\xi^\circ = (K\cap L)^\circ \subset K\cap L.
 \end{equation*}
 Conversely, we obviously have $K \cap L \subset G_\xi$.
\end{proof}

The roots also contain important information about the $G$-action on
the representation $V$: The $G$-action defines a Lie algebra action of
$\LG$ on $V$, that is a Lie algebra homomorphism $\LG \to \End(V)$,
given by
\begin{equation*}
  \xi x := \frac {\der}{\der t} \exp(t\xi) x \big|_{t=0}.
\end{equation*}
Since $V$ is a complex representation, this induces an action of the
complexified Lie algebra $\LG \otimes \C$ by $(\xi \otimes z)x:= z\xi
x$.

Note that $x \in V$ is a weight vector corresponding to the weight
$\alpha$ iff it holds for every $\xi \in \LT$ that
\begin{equation*}
  \xi x = (\xi\otimes 1)x= 2\pi \I\alpha(\xi) x.
\end{equation*}
\begin{lemma}
  If $Z \in \LG \otimes \C$ is a root vector corresponding to $\rho$
  and $x \in V$ is a weight vector corresponding to $\alpha$, then
  $Zx$ is either $0$ or a weight vector with weight $\alpha + \rho$.
\end{lemma}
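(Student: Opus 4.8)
The plan is to reduce everything to the characterization of weight vectors recorded just above the statement: a vector $w \in V$ is a weight vector for a weight $\beta$ exactly when $\xi w = 2\pi\I\beta(\xi)w$ for every $\xi \in \LT$. Accordingly I would fix an arbitrary $\xi \in \LT$ and compute $\xi(Zx)$, with the goal of showing it equals $2\pi\I(\alpha+\rho)(\xi)(Zx)$.

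First I would unwind what it means for $Z$ to be a root vector for $\rho$. The roots are the weights of the complexified adjoint representation $\LG\otimes\C$, on which the infinitesimal action of $\xi\in\LT$ is the $\C$-linear extension of $\ad_\xi$; that is, $\xi$ acts on $Z$ by $[\xi\otimes 1, Z]$, the bracket being taken in $\LG\otimes\C$. Hence $Z$ being a root vector for $\rho$ is the statement that $[\xi\otimes 1,Z] = 2\pi\I\rho(\xi)Z$ for all $\xi\in\LT$.

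The key ingredient is then that the induced map $\LG\otimes\C\to\End(V)$, $\zeta\mapsto(v\mapsto\zeta v)$, is a homomorphism of complex Lie algebras: it is the $\C$-linear extension of the real Lie algebra homomorphism $\LG\to\End(V)$ supplied by the $G$-action, and the bracket on $\LG\otimes\C$ is the $\C$-bilinear extension of the bracket on $\LG$, so the homomorphism property survives complexification. Applying this to the elements $\xi\otimes 1$ and $Z$ and any $x\in V$ gives
\begin{equation*}
  \xi(Zx) = Z(\xi x) + [\xi\otimes 1, Z]x.
\end{equation*}
Substituting $\xi x = 2\pi\I\alpha(\xi)x$ and $[\xi\otimes 1,Z] = 2\pi\I\rho(\xi)Z$ yields
\begin{equation*}
  \xi(Zx) = 2\pi\I\alpha(\xi)(Zx) + 2\pi\I\rho(\xi)(Zx) = 2\pi\I(\alpha+\rho)(\xi)(Zx).
\end{equation*}

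Since this holds for every $\xi\in\LT$, the characterization of weight vectors gives the dichotomy: either $Zx=0$, or $Zx$ is a weight vector with weight $\alpha+\rho$. I do not expect a genuine obstacle here; the argument is a one-line bracket computation, and the only point requiring a little care is the complexification bookkeeping, i.e.\ checking that the infinitesimal $\LG$-action on $V$ extends to an honest Lie algebra homomorphism on $\LG\otimes\C$ so that Lie brackets become operator commutators.
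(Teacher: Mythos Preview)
Your proof is correct and follows exactly the same approach as the paper: compute $\xi(Zx)$ for $\xi\in\LT$ using the commutator relation $\xi(Zx)=[\xi,Z]x+Z(\xi x)$, substitute the weight/root conditions, and read off the weight $\alpha+\rho$. The paper's proof is the same one-line bracket computation, just without your explicit remarks on the complexification bookkeeping.
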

\begin{proof} If $\xi \in \LT$, then
  \begin{equation*}
    \xi(Zx) =[\xi, Z]x + Z(\xi x) = 2\pi \I \rho(\xi) Z x + Z(2\pi \I
    \alpha(\xi) x) = (2\pi \I (\alpha+\rho)(\xi)) (Z x).
  \end{equation*}
\end{proof}
Note also that
$\LG \otimes \C = \LT \otimes \C \oplus \bigoplus_{\rho \in R}
L_\rho$. Thus if we consider a particular affine subspace $A$ of
$\LT^*$ whose underlying subspace is given by the span of the roots,
then the sum of the weight spaces corresponding to the weights
contained in $A$ is $\LG \otimes \C$-invariant and hence
$G$-invariant. We will use this fact below.

More precisely, the weight structure of an irreducible representation
is as follows:
\begin{theorem}[{\cite[Theorem 10.1]{hall2003lie}}]\label{thm:hall-weight-structure}
  If $V_\lambda$ is an irreducible finite-dimensional complex
  $G$-representation with highest weight $\lambda$, then an integral
  element $\alpha \in \LT^*$ is a weight of $V_\lambda$ iff $\alpha$
  is contained in the convex hull of $W \lambda$ and $\lambda -\alpha$
  is an integer combination of roots of $G$.
\end{theorem}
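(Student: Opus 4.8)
The assertion is a ``saturation'' property of the weight set: it identifies the set of weights $\Pi(V_\lambda)$ of $V_\lambda$ with the intersection of the convex hull of $W\lambda$ and the coset $\lambda + \Z R$ of the root lattice $\Z R$. My plan is to first reduce to the closed Weyl chamber. All three sets are $W$-invariant — for the weights this was noted above, the convex hull of $W\lambda$ trivially is, and $\lambda + \Z R$ is because $w\lambda - \lambda \in \Z R$ for every $w \in W$ — and every integral form is $W$-conjugate to a unique element of the fixed closed Weyl chamber $C$, so it suffices to treat integral $\alpha \in C$. For such $\alpha$, ``$\alpha$ lies in the convex hull of $W\lambda$'' means precisely that $\lambda$ is higher than $\alpha$, and the standard convexity lemma for dominant weights identifies this with ``$\lambda - \alpha$ is a non-negative real combination of the simple roots'' (the simple roots being the positive roots whose Weyl walls bound $C$); its forward direction is elementary, since $\lambda$ maximises the inner product with any dominant element over the orbit $W\lambda$, and its converse is a separating-functional/height argument. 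Since the simple roots form a $\Z$-basis of $\Z R$, adjoining the hypothesis $\lambda - \alpha \in \Z R$ upgrades ``real'' to ``integer''. So the theorem reduces to: for integral $\alpha \in C$, $\alpha$ is a weight of $V_\lambda$ iff $\lambda - \alpha$ is a non-negative integer combination of simple roots, i.e.\ iff $\alpha \preceq \lambda$ in the dominance order.

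I would then handle the two directions separately. For necessity: starting from a weight vector for a weight $\alpha$ and repeatedly applying root vectors for positive roots must terminate at $0$, because $V_\lambda$ is finite-dimensional, each such application strictly raises the weight, and $\lambda$ is the unique maximal weight; unwinding this expresses $\lambda - \alpha$ as a non-negative integer combination of positive roots, hence of simple roots, and combining with the convexity lemma shows that every weight indeed lies in the convex hull of $W\lambda$ and in $\lambda + \Z R$. The substantive half is sufficiency: given integral $\alpha \in C$ with $\alpha \preceq \lambda$, show $\alpha \in \Pi(V_\lambda)$. The plan is induction on the height $\sum n_i$ of $\lambda - \alpha = \sum n_i \alpha_i$: the base case $\alpha = \lambda$ is the non-vanishing of the highest weight space, and in the inductive step one locates a dominant weight $\mu$ of $V_\lambda$ with $\alpha \prec \mu \preceq \lambda$ (so that $\mu \in \Pi(V_\lambda)$ by the inductive hypothesis), positioned so that $\alpha$ lies on the weight string through $\mu$ in the direction of a single simple root $\beta$; the $\mathfrak{sl}_2$-representation theory of the root subalgebra attached to $\beta$ forces such strings to be unbroken, whence $\alpha$ is a weight.

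I expect this last step to be the main obstacle: the delicate point is the bookkeeping showing that from $\lambda$ one can descend by such moves to \emph{every} dominant $\alpha$ in the coset with $\alpha \preceq \lambda$, and not merely to those of a convenient shape. This is exactly the classical saturation argument — carried out via the height induction in \cite{hall2003lie}, or circumvented altogether by Freudenthal's multiplicity formula, which yields positivity of the multiplicity of every such $\alpha$ directly. Assembling the two implications with the chamber reduction of the first paragraph then gives the theorem.
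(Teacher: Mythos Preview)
The paper does not prove this theorem at all: it is stated as a citation of \cite[Theorem~10.1]{hall2003lie} and used as a black box (only the corollary that weights of an irreducible $G_\mu$-representation differ by sums of roots of $G_\mu$ is needed, in Lemma~\ref{lem:orthogonal_affine}). So there is no ``paper's own proof'' to compare against.

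That said, your sketch is a faithful outline of the standard argument and, indeed, of Hall's own proof. The reduction to the dominant chamber via $W$-invariance, the identification of ``in the convex hull of $W\lambda$'' with ``$\lambda-\alpha$ a non-negative real combination of simple roots'' for dominant $\alpha$, the easy necessity direction by raising to the highest weight, and the height induction using unbroken $\mathfrak{sl}_2$-strings for sufficiency are all correct and are exactly the ingredients Hall assembles. Your caveat about the bookkeeping in the inductive step is well placed: one must check that from any dominant $\mu\in\Pi(V_\lambda)$ with $\alpha\prec\mu$ there is a simple root $\beta$ with $\langle\mu,\beta^\vee\rangle>0$ and $\mu-\beta$ still $\succeq\alpha$ (or an equivalent device), so that the $\mathfrak{sl}_2$-string argument actually reaches $\alpha$; this is the only place a reader might want more detail, but it is routine and is spelled out in the cited reference.
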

(Note that the weights can have a higher multiplicity than 1. The
highest weight always has multiplicity 1. The multiplicities of the
other weights are for instance determined by Kostant's multiplicity
formula, see \cite[Chapter VI, (3.2) and (3.3)]{btd}.)

In the applications, we will also consider the coadjoint
representation, the dual of the adjoint representation. Choosing an
adjoint invariant inner product, we can identify both
representations and accordingly $\LT$ and $\LT^*$. This defines the
Weyl walls in $\LT^*$. Since we will often consider $G_\mu$-invariant
subrepresentations, where $\mu$ is a momentum, which is an element of
$\LT^*$, we formulate the following results in the coadjoint version.

\begin{lemma}\label{lem:orthogonal_affine}
  Let $V$ be a complex $G$-representation. For any $\mu \in \LT^*$,
  let $(\LT^*)^\mu$ denote the intersection of $\LT^*$ and all Weyl
  walls in $\LT^*$ that contain $\mu \in \LT^*$.  Let $A$ be the set
  of weights contained in a particular affine subspace that is a shift
  of the orthogonal complement of $(\LT^*)^\mu$.  Then the sum of the
  corresponding weight spaces $\bigoplus_{\alpha\in A}\CC_\alpha$ is
  $G_\mu$-invariant.
\end{lemma}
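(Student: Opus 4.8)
The plan is to reduce the statement to the observation, already recorded in the excerpt, that the span of a set of weights which forms a shift of the root span gives a $G$-invariant (indeed $\LG\otimes\CC$-invariant) subspace, and then to run the same argument with $G$ replaced by $G_\mu$. First I would recall that, after fixing an adjoint-invariant inner product, $(\LT^*)^\mu$ is by Lemma~\ref{lem:BTD} (in its coadjoint incarnation) exactly the fixed point space $(\LT^*)^{G_\mu}$ inside $\LT^*$, and the roots of $G_\mu$ are precisely those roots $\rho$ of $G$ whose Weyl wall contains $\mu$, i.e. those $\rho$ with $\mu \in \ker\rho$. Crucially, for such a root $\rho$, the functional $\rho$ vanishes on $(\LT^*)^\mu$ under the identification, so $\rho$ lies in the orthogonal complement of $(\LT^*)^\mu$; hence the $\RR$-span of the roots of $G_\mu$ is contained in $\left((\LT^*)^\mu\right)^\perp$.

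Next I would set up the bookkeeping. Let $U$ denote the orthogonal complement of $(\LT^*)^\mu$ in $\LT^*$ and let $A = (\alpha_0 + U) \cap \{\text{weights of }V\}$ be the set in the statement; put $W_A := \bigoplus_{\alpha \in A} \CC_\alpha$. To show $W_A$ is $G_\mu$-invariant it suffices, since $G_\mu$ is a closed subgroup of a compact connected group and hence the $G_\mu$-invariant subspaces are exactly the subspaces invariant under its Lie algebra action (here I would use that $W_A$ is visibly invariant under the maximal torus $T \subset G_\mu$ and then close up under the root vectors of $G_\mu$), to check that $W_A$ is stable under the action of every root vector $Z \in L_\rho$ with $\rho$ a root of $G_\mu$. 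By the Lemma on root vectors acting on weight vectors (the one proved just above Theorem~\ref{thm:hall-weight-structure}), if $x \in \CC_\alpha$ with $\alpha \in A$, then $Zx$ is either $0$ or a weight vector of weight $\alpha + \rho$. So I must verify $\alpha + \rho \in A$ whenever $\alpha + \rho$ is a weight. It is a weight of $V$ by hypothesis; and $\alpha + \rho = \alpha_0 + \big((\alpha - \alpha_0) + \rho\big)$ with $\alpha - \alpha_0 \in U$ and $\rho \in U$ by the previous paragraph, so $\alpha + \rho \in \alpha_0 + U$, i.e. $\alpha + \rho \in A$. Together with $T$-invariance (each $\CC_\alpha$ is a $T$-subrepresentation), this shows $W_A$ is invariant under $\LT \oplus \bigoplus_{\rho \in R^+(G_\mu)} M_\rho = \LG_\mu$, hence under $(G_\mu)^\circ$, and finally under all of $G_\mu$ because the adjoint isotropy group $G_\mu$ is connected by Lemma~\ref{lem:adjoint_isotropy_connected}.

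The only genuine point requiring care — the "hard part", such as it is — is the claim that the roots of $G_\mu$ lie in the orthogonal complement of $(\LT^*)^\mu$; everything else is a direct transcription of the $G$-level computation to the subgroup $G_\mu$. This claim, however, is immediate once one unwinds the identifications: $(\LT^*)^\mu$ is spanned by those $\eta \in \LT^*$ annihilated by all roots $\rho$ with $\mu \in \ker\rho$ (Lemma~\ref{lem:BTD}), so under the inner product identification a root $\rho$ of $G_\mu$ pairs to zero with every such $\eta$, which is exactly orthogonality. I would also remark that connectedness of $G_\mu$ is what lets us avoid worrying about components of $G_\mu$ acting by permuting weight spaces; this is precisely why Lemma~\ref{lem:adjoint_isotropy_connected} was stated first.
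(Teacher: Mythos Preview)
Your proof is correct and shares the same key observation as the paper: the roots of $G_\mu$ lie in the orthogonal complement of $(\LT^*)^\mu$, so adding roots of $G_\mu$ to a weight keeps it in the same affine coset $\alpha_0 + U$.

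The packaging differs slightly. The paper invokes Theorem~\ref{thm:hall-weight-structure} to conclude that the weights of any irreducible $G_\mu$-subrepresentation of $V$ differ by integer combinations of roots of $G_\mu$, hence lie in a single coset $\alpha_0 + U$; therefore each irreducible $G_\mu$-piece sits inside some $W_A$, and $W_A$ is $G_\mu$-invariant as a sum of such pieces. You instead argue directly at the Lie algebra level, using the root-vector lemma to show $\LG_\mu$ preserves $W_A$, and then invoke connectedness of $G_\mu$ (Lemma~\ref{lem:adjoint_isotropy_connected}) to pass to the group. Your route is marginally more elementary since it avoids the highest-weight theorem, at the cost of needing the connectedness lemma explicitly; the paper's route is slightly more conceptual in that it exhibits the decomposition of $V$ into $G_\mu$-irreducibles grouped by affine coset. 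Both are equally valid and essentially the same argument viewed from two angles.
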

\begin{remark}
  Note that $(\LT^*)^\mu = \LT^*$ iff $\mu$ is not contained in any
  Weyl wall. In this case, affine subsets are orthogonal to
  $(\LT^*)^\mu$ iff they consist of a single point. Thus $A$ contains
  at most one weight.
\end{remark}
\begin{proof}[Proof of Lemma~\ref{lem:orthogonal_affine}.]
  We identify $\LG^*$ and $\LG$ via $G$-invariant product and consider
  $\mu$ as an element of $\LT$.

  By Lemma~\ref{lem:BTD}, $\LG_\mu\otimes\C = \LT\otimes \C \oplus
  \bigoplus_{\alpha \in N}\CC_\alpha$, with $N=\setthat{\alpha \in
    R}{\mu \in \ker \alpha}$. Thus $N$ coincides with the set of roots of
  $G_\mu$. Since $\alpha$ is orthogonal to the Weyl wall $\ker \alpha$
  (with the respect to the above identification of $\LG$ and $\LG^*)$,
  all elements of $N$ are orthogonal to $(\LT^*)^\mu$.

  By Theorem~\ref{thm:hall-weight-structure} the weights of any
  irreducible $G_\mu$-representation differ by a sum of elements of
  $N$. Thus the weights of any irreducible $G_\mu$-subrepresentation
  of $V$ are contained in the same affine space orthogonal to
  $(\LT^*)^\mu$.
\end{proof}

\subsection{The group $\SU(n)$: Maximal torus, weights, root system,
  Weyl group}\label{sec:su3}
Here we summarize the basic data about the groups $\SU(n)$ that we need
for our examples in the end of the article. Again, we refer to
\cite{btd} and \cite{hall2003lie} for details.

A maximal torus $T$ of $\SU(n)$, $n\ge 2$, is given by the subgroup
formed by the diagonal matrices. The diagonal entries are of the form
$\e^{2\pi \I r_i}$, $r_i \in \R$, $i = 1, \dots, n$. Since the product
of the diagonal entries is 1, we can assume that
$\sum_{i=1}^n r_i= 0$. Hence the Lie algebra $\LT$ can be identified
with the $(n-1)$-dimensional subspace of elements of $\R^n$ whose
entries sum up to $0$. The Weyl group is given by the group $S_n$
which acts on $\LT$ by permuting the $n$ entries.

Alternatively, we fix the basis given by the $(n-1)$-vectors of the
form $(0, \dots, 0, 1, -1, 0, \dots)$ and denote the corresponding
coefficients.

For each of these two representations, the kernel of the exponential
map $\exp: \LT \to T$ is given by the
vectors with integer entries.

Both representations have their advantages: The second one is more
concise, but the symmetry is easier to see in the first one.

Similarly, we obtain
$\LT^*\cong \quot{\R^n}{\langle (1, 1, \dots, 1)\rangle}$. We will
switch between different representations of $\LT^*$: If $n$ is large,
we choose the representative in $\R^n$ whose entries sum up to
$0$. Br\"ocker and tom Dieck \cite{btd} take the representative with
last entry $0$, so they only need to consider $n-1$ entries, but we do
not use this description here. Instead, if $n$ is small enough to get
the symmetry from the graphics, we denote the coefficients with respect
to the dual base of the vectors $(0, \dots, 0, 1, -1, 0, \dots)$.
Note that
the integral forms correspond to the vectors with integer entries with
respect to the second and the third representation of $\LT^*$, but not for the
first one. A vector $(\alpha_1, \dots, \alpha_n)$ with $\sum_i \alpha_i
= 0$ represents an integral form iff the numbers $n \alpha_i$ are
integers that are congruent modulo $n$.

As vectors in $\R^n$ with entry sum $0$, the roots of $\SU(n)$ are given
by the vectors with one entry $1$, one entry $-1$ and $n-2$ entries
$0$. In the classification of root systems, this is the root system of
type $A_{n-1}$. The root system of a product of groups is given by a
product of the corresponding root systems, see \cite{btd}.

\subsection{Hamiltonian relative equilibria}
In this article, we investigate the local structure of relative
equilibria in Hamiltonian systems with $G$-symmetry near a given
equilibrium with isotropy group $G$.

A point $p$ of a $G$-manifold $P$ is a \emph{$G$-relative equilibrium}
of a $G$-equivariant vector field $X$ iff the group orbit $Gp$ is
$X$-invariant. Equivalently, there exists $\xi \in \LG$ with
$X_p = \xi p$. Then $\xi$ is called a \emph{generator} (or
\emph{velocity}) of the relative equilibrium $p$. If $\xi$ is a
generator of $p$, then so is $\xi + \eta$ for any $\eta \in
\LG_p$. Thus the generator is not unique in general, but it is unique
regarded as an element of $\quot \LG {\LG_p}$.

Now we suppose that $P$ is a smooth symplectic manifold with a
$G$-invariant symplectic form $\omega$ and consider the Hamiltonian
vector field $X_h$ of a smooth $G$-invariant function $h: P \to \R$,
i. e.
\begin{equation*}
  \der h(x) = \omega(x)(X_h(x), \cdot)
\end{equation*}
for every $x \in P$. Then there is an additional
structure: We assume that there is a \emph{momentum map} $\J: P \to
\LG^*$ which satisfies
\begin{equation*}
  \langle \der \J(x) v, \xi\rangle = \omega(x) (\xi x,v)
\end{equation*}
and is equivariant with respect to the coadjoint action on
$\LG^*$. (In general, the momentum map exists at least locally, which
suffices for our purpose, see \cite[Part II, Chapter
26]{guillemin1984symplectic}, also for sufficient conditions for the
existence of a global momentum map.)

Thus the vector fields $\xi_P\colon x \mapsto \xi x$ are Hamiltonian:
Let $\J^\xi:P \to \R$ denote the function $x \mapsto \langle \J(x),
\xi \rangle$. Then $\J^\xi$ is the Hamiltonian function corresponding
to $\xi_P$. Therefore we have another equivalent characterization of
relative equilibria: $p \in P$ is a relative equilibrium with
generator $\xi$ iff $p$ is a critical point of the function $h-\J^\xi$.

Note that the functions $\J^\xi$ are $G_\xi$-invariant but in general
not $G$-invariant.

The momentum is constant along trajectories of the Hamiltonian vector
field. This implies in particular that for a relative equilibrium $p$
with generator $\xi$ and momentum $\mu := \J(p)$, we have $\xi \in
\LG_\mu$. Thus if we identify $\LG$ and $\LG^*$ via a $G$-invariant
product, then $[\xi, \mu] = 0$.

By the equivariant Darboux theorem (see \cite[Theorem
22.2]{guillemin1984symplectic} and note also the correction by
Dellnitz and Melbourne \cite{equivariantdarboux}), a point with
isotropy group $G$ in a symplectic manifold with a $G$-invariant
symplectic form has a neighborhood such that there is a
$G$-equivariant symplectomorphism to a neighborhood of $0$ within a
\emph{$G$-symplectic representation} $V$, i. e. a $G$-representation
$V$ together with a $G$-invariant symplectic form. Thus we will
restrict ourselves to this case. Then the functions $\J^\xi$ are
quadratic forms on $V$.

\subsection{Review of the results of my thesis}
Here I provide main ideas and results of my thesis \cite{thesis} and
point out some corrections.

Let $V$ be a $G$-symplectic representation and $h:V \to \R$ be a smooth
$G$-invariant Hamiltonian function. Then in \cite{thesis}, I
follow the approach by Ortega and Ratiu \cite{Ortega2004RelativeEN} to
find pairs $(x,\xi) \in V \times \LG$ such that $x$ is a critical
point of $h-\J^\xi$:

Consider the values of $\xi \in \LG$, such that
$V_0:=\ker \der^2(h-\J^\xi)(0)$ is non-trivial. Then perform a
Lyapunov-Schmidt reduction to obtain a function
$g: V_0 \times \LG \to \R$ such that for each $\eta$ in a neighborhood
of $\xi$, the critical points of $g(\cdot, \xi)$ near $0 \in V_0$ are
in bijection with the critical points of $h-\J^\xi$ in $V$.

For bifurcation problems with symmetry but a trivial action on the
parameter space, the Lyapunov-Schmidt-reduction can be performed in a
way that the symmetry is preserved. Here, our parameter space is the
Lie algebra $\LG$ and the $G$-action on $\LG$ is the adjoint action,
which is non-trivial in general. The way out is to consider a smaller
parameter space: Ortega and Ratiu \cite{Ortega2004RelativeEN} only
search for solutions in $\LG^{G_\xi}$.

The approach of \cite{thesis} is to consider the action of the maximal
torus $T$ and to search for the $T$-relative equilibria. These are
exactly the $G$-relative equilibria that have a generator in
$\LT$. The adjoint action of $T$ on $\LT$ is trivial and hence the
Lyapunov-Schmidt-reduction preserves the $T$-symmetry.  Since every
$\xi \in \LG$ has an element of $\LT$ in its adjoint orbit and since
$p$ is a relative equilibrium with generator $\xi$ iff $gp$ is a
relative equilibrium with generator $\Ad_g \xi$, the union of
$G$-orbits of $T$-relative equilibria coincides with the set of
$G$-relative equilibria.

Another important tool is the linear theory developed by Melbourne and
Dellnitz \cite{melbourne_dellnitz_1993} together with a theorem about
generic bifurcation of Hamiltonian vector fields with symmetry by
these two authors and Marsden \cite[Theorem
3.1]{Dellnitz1992GenericBO}. It implies that generically, there is a
$G$-invariant inner product $\langle \cdot, \cdot \rangle$ on $V$ such
that $\omega = \langle \cdot, J \cdot \rangle$ and $J$ defines a
$G$-equivariant complex form on the \emph{center space} $V^c$ of
$\der X_h(0)$ that commutes with $\der X_h(0)|_{V^c}$. (The center
space $V^c$ is given by the sum of the generalized eigenspaces of
$\der X_h(0)$ corresponding to eigenvalues with real part $0$). It is
easy to see that $V_0 \subset V^c$ for every $V_0$ of the above
form. Thus we can consider $V_0$ as a complex $T$-representation if
$\xi \in \LT$. Hence $V_0$ is a sum of weight spaces of the complex
$G$-representation $V^c$. Each weight space $\C_\alpha$ is a
$T$-symplectic representation with symplectic form
$\langle \cdot, \I \cdot \rangle$, where
$\langle \cdot, \cdot \rangle$ is the real inner product described
above.

The existence of a $G$-invariant inner product of this kind follows
from the following genericity assumption:

\begin{definition}
  $h$ satisfied the \emph{generic center space condition (GC)} iff
  $\der X_h(0)$ is non-degenerate and $V^c$ splits into
  \emph{irreducible $G$-symplectic} subrepresentations ($G$-symplectic
  representations with no proper non-trivial $G$-symplectic
  subrepresentations) that coincide with the eigenspaces of
  $\der^2h(0)|_{V^c}$.
\end{definition}

Note that (GC) implies that there is a basis of weight vectors of
$V^c$, namely a union of weight vectors of the eigenspaces of
$\der^2h(0)|_{V^c}$, with respect to which $\der^2h(0)|_{V^c}$ is a
diagonal matrix. Moreover, with respect to this basis
$\der^2\J^\xi(0)|_{V^c}$ is also a diagonal matrix for every
$\xi \in \LT$: If $S^c$ denotes the set of weights of $V^c$ (counted
with multiplicities) and $x_\alpha$ denotes the $\C_\alpha$-component
of $x \in V^c \cong \bigoplus_{\alpha \in S^c} \C_\alpha$, then
$\J^\xi(x) = \pi \sum_{\alpha \in S^c} \abs{x_\alpha}^2\alpha$. Hence
$\der^2\J^\xi(0)|_{V^c}$ has the diagonal entries $2\pi
\alpha(\xi)$. Thus if $\C_\alpha$ is a weight space of the
$\der^2h(0)|_{V^c}$-eigenspace corresponding to the eigenvalue $c_i$,
then for any $\xi \in \LT$ the space $\C_\alpha$ is contained in
$\ker \der^2 (h-\J^\xi)(0)$ iff $2\pi \alpha(\xi) = c_i$.

From now on, we assume (GC) and in addition the following genericity
assumption (NR'). Together, they determine the weight structure of the
subspaces that occur as non-trivial kernels of the form
$\ker \der^2(h-\J^\xi)(0)$, $\xi \in \LT$:

\begin{definition}\label{def:full}
  Let $T$ be a real vector space and $S = \bigcup_{i=1}^n S_i$ be a
  union of subsets $S_i \subset T^*$. Then $S$ is \emph{full} iff for
  every vector $(c_1, \dots, c_n)\in \RR^n$, there is an $x \in T$
  with
  \begin{equation*}
    \forall i: \forall \alpha \in S_i: \alpha(x) =c_i. 
  \end{equation*}
\end{definition}

\begin{remark}\label{rem:full}
  Let $W_i$ denote the underlying subspace of the affine set
  $\aff(S_i)$ and let $W$ be the sum of the spaces $W_i$. Each set
  $S_i$ projects to a single point $s_i$ in $\quot {T^*}{W}$. Then $S$
  is full iff the set $\{s_i\}_{i\in I}$ (possibly with
  multiplicities) is linearly independent in
  $\quot {T^*}{W}$: Given $c = (c_1, \dots, c_n) \in \R^n$, let
  $X_c \subset T$ denote the solution set of the corresponding
  equation system. Identifying $T$ and $T^{**}$, each $x \in X_c$ is
  contained in $\ann(W)$. Thus $S$ is full iff the linear map
  $s\colon \ann(W) \to \R^n$ defined by
  $x \mapsto (s_1(x), \dots, s_n(x))$ has full rank.

  The dimension of $X_c$ coincides with the dimension of $\ker
  s$. Hence $\dim X_c = \dim \ann(W) -n$.

  (This alternative description of full sets is similar to the one
  given in \cite[Remark~6.60]{thesis}, but clearer.)
\end{remark}

\begin{definition}\label{def:NR'}
  Suppose that the $G$‑invariant Hamiltonian function $\h$ satisfies
  condition (GC). Then \emph{the generalized non-resonance condition
    (NR')} holds for $\h$ iff for each union $S = \bigcup_i S_i$ of
  sets $S_i$ of linearly independent weights of the eigenspaces $U_i$ of
  $\der^2\h(0)$ and the vector $c = (c_1, \dots c_n)$ of the
  corresponding eigenvalues, there is a $\xi \in \LT$ with
  \begin{equation*}
    \forall i: \forall \alpha \in S_i: \alpha(\xi) =c_i
  \end{equation*}
  iff $S$ is full.
\end{definition}

\begin{remark}
  If $G = T$ then (NR') is equivalent to the \emph{condition (NR)}
  that for each $\xi \in \LT$ the kernel $\ker \der^2(\h-\J^\xi)(0)$
  has only linearly independent weights.

  We obtain another generic requirement, which is more natural but
  slightly stronger, if we demand for unions $S = \bigcup_i S_i$ of
  sets $S_i$ of weights of the eigenspaces $U_i$ of $\der^2\h(0)$ in
  general that there is such a $\xi$ iff $S$ is full.
\end{remark}



One of the main results of \cite{thesis} is \cite[Lemma~6.55]{thesis}, which states
that if $V_0 = \ker \der^2 (h-\J^\xi)(0)$ for $\xi \in \LT$ has only
linearly independent weights, then there is a local manifold of
$T$-relative equilibria tangent to $V_0$. To apply \cite[Lemma~6.55]{thesis}
to $G$-representations, we only have to determine the kernels of this
kind with linearly independent weights. Here, the statement given in
Theorem 6.64 of \cite{thesis} is not correct in the case that $V^c$ is
a reducible $G$-representation. The corrected version is as follows
(the difference to the original version is pointed out below):

\begin{theorem}[Corrected version of Theorem 6.64 of
  \cite{thesis}.]\label{thm:main-thesis}
  Let $G$ be a connected compact Lie group with maximal torus $T$ and
  $V$ be a symplectic $G$‑representation. Suppose that $\h:V\to\RR$ is
  a smooth $G$‑invariant Hamiltonian function with critical point at
  $0$ that satisfies the genericity assumptions (GC) and (NR').

  Consider (possibly empty) subsets $S_i$ of weights of the
  eigenspaces $U_i$ of $\der^2h(0)|_{V^c}$ and set
  $S:=\bigcup_{i\in I} S_i$. Let $W_i$ denote the underlying subspace
  of the affine space $\aff (S_i)$ and $W$ be the sum of the spaces
  $W_i$, $i \in I$. If $S$ is linearly
  independent and in addition, each $S_i$ the maximal subset
  of weights of $U_i$ contained in the affine space  $\aff (S_i) + W$
  (in particular each weight of $S_i$ occurs with multiplicity 1 in
  $U_i$), then there is a $T$‑invariant manifold of $T$‑relative
  equilibria whose tangent space at $0$ is given by the sum of the
  corresponding weight spaces of the elements of $S$: For
  $\alpha \in S_i$, we obtain the summand $\CC_\alpha \subset U_i$.
\end{theorem}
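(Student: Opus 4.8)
The plan is to reduce the statement to \cite[Lemma~6.55]{thesis} by producing a single $\xi\in\LT$ for which
\begin{equation*}
  \ker\der^2(h-\J^\xi)(0)=\bigoplus_{i\in I}\bigoplus_{\alpha\in S_i}\CC_\alpha,
\end{equation*}
the summand $\CC_\alpha$ for $\alpha\in S_i$ being the weight space inside $U_i$. Since each weight of $S_i$ occurs with multiplicity $1$ in $U_i$ and $S$ is linearly independent, this kernel has only linearly independent weights, so \cite[Lemma~6.55]{thesis} supplies the desired local $T$-invariant manifold of $T$-relative equilibria tangent to it at $0$. Throughout I use, as noted after the definition of condition (GC), that a weight space $\CC_\beta\subset U_i$ --- with $U_i$ the $\der^2 h(0)|_{V^c}$-eigenspace of eigenvalue $c_i$ --- lies in $\ker\der^2(h-\J^\xi)(0)$ precisely when $\beta(\xi)=c_i$, and that every $c_i$ is nonzero because (GC) makes $\der X_h(0)$, hence $\der^2 h(0)$, non-degenerate.

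To find a candidate $\xi$, I would first check that $S$ is full in the sense of Definition~\ref{def:full}. By Remark~\ref{rem:full} it is enough that the images $s_i$ of the $S_i$ in $\LT^*/W$ be linearly independent; starting from a relation $\sum_i\lambda_i s_i=0$, picking a representative $\alpha_i\in S_i$ and expanding each element of $W=\sum_j W_j$ as a combination of the weights of the corresponding $S_j$ with coefficient sum $0$, one obtains a linear dependence among the distinct elements of $S$, which must be trivial and hence forces each $\lambda_i=0$. As $h$ satisfies (GC), condition (NR') of Definition~\ref{def:NR'} applies, so the linear system $\alpha(\xi)=c_i$ ($\alpha\in S_i$, $i\in I$) has a nonempty affine solution set $X_c\subset\LT$, of dimension $\dim\ann(W)-\abs{I}$ by Remark~\ref{rem:full}. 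For every $\xi\in X_c$ the inclusion $\bigoplus_{\alpha\in S}\CC_\alpha\subset\ker\der^2(h-\J^\xi)(0)$ holds automatically.

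It then remains to choose $\xi\in X_c$ so that no further weight space enters the kernel, and I would run through the weight spaces $\CC_\beta\subset U_i$ with $\beta\notin S_i$. If $\beta\in\aff(S_i)\subset\aff(S_i)+W$, maximality of $S_i$ would force $\beta\in S_i$, so this case does not arise. If $\beta\in\vspan(S_i)\setminus\aff(S_i)$, then $\beta=\sum_{\alpha\in S_i}\mu_\alpha\alpha$ with $\sum_\alpha\mu_\alpha\neq 1$, so $\beta(\xi)=(\sum_\alpha\mu_\alpha)\,c_i\neq c_i$ for every $\xi\in X_c$ (using $c_i\neq 0$), and $\CC_\beta$ never lies in the kernel. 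If $\beta\notin\vspan(S_i)$, then $\{\xi\in X_c:\beta(\xi)=c_i\}$ is a proper affine subspace of $X_c$: if it were all of $X_c$, replacing $S_i$ by $S_i\cup\{\beta\}$ (and keeping the other $S_j$) would give a union of linearly independent sets of weights of the $U_j$ whose associated system still has the nonempty solution set $X_c$ and which, by (NR'), is therefore full; but its defining subspace $W'$ satisfies $\dim W'=\dim W+1$, because maximality of $S_i$ rules out $\beta-\alpha_0\in W$ for $\alpha_0\in S_i$, and this contradicts $\dim X_c=\dim\ann(W)-\abs{I}$. Since $V^c$ has only finitely many weight spaces, a finite union of proper affine subspaces cannot exhaust $X_c$, so a good $\xi$ exists; for it the kernel equals $\bigoplus_{\alpha\in S}\CC_\alpha$, and \cite[Lemma~6.55]{thesis} completes the argument.

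The step I expect to be the main obstacle is this last case distinction --- reconciling the maximality condition on the $S_i$ with the genericity of $\xi$ through the dimension count for full sets; the remaining steps amount to bookkeeping with weights, spans and affine hulls.
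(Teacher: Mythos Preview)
Your proposal is correct and follows the same route as the paper: both reduce to \cite[Lemma~6.55]{thesis} by producing a $\xi\in\LT$ with $\ker\der^2(h-\J^\xi)(0)=\bigoplus_{\alpha\in S}\CC_\alpha$, and both rule out extra weights $\beta$ via the dimension formula of Remark~\ref{rem:full} combined with (NR') and the maximality hypothesis (your observation $\beta-\alpha_0\notin W$ is exactly the paper's key step). One small gap to patch in your third case: when $S_i$ is empty there is no $\alpha_0\in S_i$ to choose and in fact $W'=W$; the contradiction then comes instead from the number of \emph{non-empty} sets increasing by one, so that $\dim X_c$ would have to equal both $\dim\ann(W)-n$ and $\dim\ann(W)-(n{+}1)$ --- the paper treats this case separately, and relatedly your $\abs{I}$ should throughout be the count of non-empty $S_i$.
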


\begin{remark}
  The condition given in the original version, \cite[Theorem
  6.64]{thesis}, is that each $S_i$ is maximal in $\aff(S_i)$. This is
  not strong enough:
  
  If
  $X\subset \LT$ denotes the set of elements $\xi \in \LT$ such that
  $\ker \der^2 (h-\J^\xi)(0)$ contains all the weight spaces
  corresponding to $S$, then $X$ coincides with the solution set of
  \begin{equation*}
    \forall i: \forall \alpha \in S_i: 2 \pi \alpha(\xi) = c_i.
  \end{equation*}
  Hence any element of $W$ is contained in the annihilator of
  $X$. Thus any weight $\alpha$ of $U_i$ contained in $\aff(S_i) + W$
  satisfies $ 2 \pi \alpha(\xi) = c_i$ for every $\xi \in X$, too. Thus if there is a
  $\xi \in \LT$ such that $\ker \der^2 (h-\J^{\xi})(0)$
  coincides with the sum of the weight spaces corresponding to $S$,
  then necessarily, for every $i$, the set $S_i$ is a maximal subset of
  the set of weights of $U_i$ within the space $\aff(S_i) + W$. The
  proof of Theorem~\ref{thm:main-thesis} shows that this condition is
  also sufficient.
\end{remark}
\begin{proof}[Proof of Theorem~\ref{thm:main-thesis}]
  We show  that there is indeed
  a $\xi\in \LT$ such that the space $\ker \der^2 (\h-\J^\xi)(0)$ consists of
  the corresponding weight spaces of the weights contained in $S$:
  Since the $S_i$ are linearly independent, 
  there are
  nonempty subsets $X_i \subset \LT$ such that
  $\bigoplus_{\alpha \in S_i} \CC_\alpha\subset U_i$ is contained in
  $\ker \der^2 (\h-\J^\xi)(0)$ if $\xi \in X_i$. Since
  $S= \bigcup_{i \in I} S_i$ is linearly independent, the intersection
  $X:=\bigcap_i X_i$ is non-empty. Moreover, there is a $\xi \in X$ with
  $\ker \der^2 (\h-\J^\xi)(0)= \bigoplus_{\alpha \in S} \CC_\alpha$:

  Suppose that we add a weight $\beta$ of $U_j$ to the set
  $S_j$. Let $\tilde X_j \subset X_j$ denote the intersection of $X_j$
  with the set of solutions $\xi$ of $2 \pi \beta(\xi) = c_j$ and set
  $\tilde X = X \cap \tilde X_j$. Then we have to show that $\dim
  \tilde X < \dim X$ or $\tilde X$ is empty.
  By assumption, $\beta$ is not contained in $\aff(S_j)$. Hence if
  $\tilde X$ is non-empty, $S_j \cup
  \{\beta\}$ is linearly independent and thus by
  condition~(NR'), after adding $\beta$ to $S_j$ we still have a full
  set. By assumption, $\beta -\alpha \notin W$ if $\alpha \in
  S_j$. Thus if $S_j$ is non-empty, the sum $\tilde W$ of the underlying subspaces of
  $\aff(S_i)$ for $i \ne j$ and $\aff(S_j \cup \{\beta\})$ is
  of higher dimension than $W$ and hence
  \begin{equation*}
    \dim \tilde X = \dim \ann(\tilde W) -n < \dim \ann(W) -n = \dim X,
  \end{equation*}
  where $n$ denotes the number of non-empty sets $S_i$.
  If $S_j$ is empty, then $\tilde W = W$ and
\begin{equation*}
    \dim \tilde X = \dim \ann(W) -(n+1) < \dim \ann(W) -n = \dim X.
  \end{equation*}
  Now the result follows from \cite[Lemma 6.55]{thesis}.
\end{proof}

As shown in \cite[Lemma 6.55]{thesis}, for each
$V_0 = \ker \der^2 (h-\J^\xi)(0)$ with linearly independent
weights, we obtain a manifold of $T$-relative equilibria tangent to
$V_0$ in $0$. The local manifold of relative
equilibria is given by the image of a local immersion $m_{V_0}$ of a
neighborhood of $0$ in $V_0$ into $V$ with $m_{V_0}(0)=0$. If
$W_0 \subset V_0$ is a $T$-invariant subspace, then the map $m_{W_0}$
locally coincides with the restriction of $m_{V_0}$ to (a neighborhood
of 0 within) $W_0$.
\begin{definition}
We refer to the image of $m_{V_0}$ as the \emph{manifold that
bifurcates at $V_0$} or at the set $X$ with $X = \setthat{\xi
\subset \LT}{V_0 \subset \ker \der^2(h-\J^\xi)(0)}$.
Similarly, the
set of points in the image of $m_{V_0}$ that are not contained in the
image of $m_{W_0}$ for any $T$-invariant proper subspace $W_0$ of $V_0$ is
called the \emph{stratum that bifurcates at $V_0$ }(or \emph{at $X$}).
\end{definition}

The idea of the proof is based on equivariant transversality theory.
The approach is very similar to an application of equivariant
transversality theory to bifurcation theory worked out by Field,
partly together with Richardson \cite{Field1989SymmetryBA,
  Field1991LocalSO, field1996symmetry}. See Field's book
\cite{field2007dynamics} for an introduction into the theory.

We consider the function $g: V_0 \times \LT \to \R$ obtained from
Lyapunov-Schmidt reduction. Up to third order, $g$ coincides with the
function
\begin{equation*}
  (x,\xi) \mapsto (h-\J^\xi)(x)
\end{equation*}
restricted to $V_0 \times
\LT$. If $V_0 \cong \bigoplus_{i=1}^l \C_{\alpha_i}$, then the
quadratic polynomials $p_i(x) = \abs{x_{\alpha_i}}^2$ for $x =
(x_{\alpha_1}, \dots, x_{\alpha_l})$ generate the
ring of invariant polynomials on $V_0$. Hence by invariant theory, the
$T$-invariant function $g$ is of the form $g(p_1, \dots, p_l,
\xi)$. Thus we can decompose the function $(x, \xi) \mapsto
\nabla_{V_0}g(x,\xi)$ into $g = \vartheta \circ \Gamma$, given by
\begin{align*}
  \Gamma:V_0 \times \LT &\to V_0 \times \R^l\\
  (x,\xi) &\mapsto \left(x, \partial_{p_1}g(x,\xi),
            \dots, \partial_{p_l}g(x,\xi)\right)
\end{align*}
and
\begin{align*}
  \vartheta: V_0 \times \R^l \to V_0\\
  (x,t) \mapsto \sum_{i=1}^lt_i\nabla p_i(x) = 2 \sum_{i=1}^lt_i x_{\alpha_i}
\end{align*}
(Here we choose the $T$-invariant real inner product $\langle
x,y\rangle = \real \sum_{i=1}^l x_{\alpha_i}\overline{y_{\alpha_i}}$
on $V_0$.)
In terms of equivariant transversality theory, $g$ is
\emph{$G$-1-jet-transverse} to $0 \in V_0$ iff $\Gamma$ is transverse
to the Whitney-stratified set $\Sigma:= \vartheta^{-1}(0)$ in
$0$. As shown in \cite{thesis}, $\Gamma$ is even transverse to $0 \in V_0
\times \R^l$. We then obtain for every $x$ in a neighborhood of the
origin of $V_0$ a generator $\xi(x) \in \LT$, unique modulo $\LT_x$, with
$\nabla_{V_0}g(x,\xi(x_0)) =0$. Then $m_{V_0}(x)$ is defined as the
corresponding critical point of $h-\J^{\xi(x)}$ in $V$.

More precisely, let $V_1$ denote the orthogonal complement of
$V_0$. From the Lyapunov-Schmidt reduction, we obtain a local
$T$-equivariant map
$v_1$ with image in $V_1$ defined in a neighborhood of $(0,
\xi_0)$, if $V_0 = \ker \der^2(h-\J^{\xi_0})(0)$, that solves
\begin{equation*}
  \der_{V_1} (h-\J^\xi)(x+v_1(x,\xi)) = 0
\end{equation*}
uniquely within some neighborhood of $0$. Then
$v_1(x, \xi + \eta) \in V_1$ is constant in $\eta \in \LT_x$ (see
below) and so is $g$, which is given by
$g(x, \xi) = (h-\J^\xi)(x+v_1(x,\xi))$. Thus
$m_{V_0}(x) = x+v_1(x,\xi(x))$ is well-defined, even though $\xi(x)$ is
an element of $\quot \LT {\LT_x}$. Moreover, if $V_0$ is contained in
$\tilde V_0 = \ker \der^2(h-\J^{\eta_0})$, the map $v_1$ defined for
pairs $(x, \xi)$ with $x \in V_0$ solves the defining equation of
the corresponding map for $\tilde V_0$ on its domain. Thus the
definition of the maps $v_1$ is compatible with restrictions, and
hence this holds for the embeddings $m_{V_0}$.

\begin{remark}\label{rem:constant_in_l}
  Since we will need the argument later on and also have to add some
  corrections, we now elaborate this point in more detail: 
  Let $T_0 \subset T$ be the kernel of the $T$-action on $V_0$. Its
  Lie-algebra $\LT_0$ is given by the common kernel of the
  weights of $V_0$. Since by $T$-equivariance of $v_1$, the point $x+
  v_1(x,\xi)$ is contained in $V^{T_0}$ for every $x \in V_0$ and $\xi \in
  \LT$, the derivative $\der_V\J^\eta(x +  v_1(x,\xi))$ vanishes for
  every $\eta \in \LT_0$. Thus $v_1(x, \xi)$ also solves
  \begin{equation*}
     \der_{V_1} (h-\J^{\xi+\eta})(x+v_1(x,\xi)) = 0
  \end{equation*}
  and by uniqueness $v_1(x,\xi) = v_1(x,\xi+\eta)$ if both are defined. 
  This implies that the map $v_1$ defined on a
  neighborhood of $(0,\xi_0)$ can be extended into a neighborhood
  which is invariant under addition of elements of $\LT_0$ such
  that $v_1$ is constant in the direction $\LT_0$. 
 (In principle,
  this argument is given in \cite[Remark 2.10]{thesis}, but it has some
  inaccuracies. In particular, the argument is given in the general,
  possibly non-abelian case there, but it lacks the condition that $L$ is a
  normal subgroup of $G$, which implies that $\LG^L + \mathfrak l =
  \LG$, see for instance, \cite[Remark 3.10.2]{field2007dynamics}. However, we
  only need the abelian case.)

  If $V_0 \subset \tilde V_0 = \ker \der^2(h-\J^{\eta_0})(0)$, then
  $\eta_0 - \xi_0 \in \LT_0$ and hence
  \begin{equation*}
    \tilde V_0^{T_0} \subset \ker \der^2(h-\J^{\xi_0})(0)= V_0,
  \end{equation*}
  thus $V_0 = \tilde
  V_0^{T_0}$. Moreover, we have $v_1(x, \xi) \in V_1^{T_0}$ for $x \in V_0,
  \xi \in \LT$. Let $\tilde V_1$ denote the orthogonal complement of
  $\tilde V_0$ and note that the $T$-invariance of the inner product implies
  \begin{equation*}
  V_0\oplus V_1^{T_0} = V^{T_0} = \tilde V_0^{T_0} \oplus \tilde
  V_1^{T_0} =  V_0\oplus \tilde V_1^{T_0}.
\end{equation*}
Together with $\tilde V_1 \subset V_1$, we obtain $V_1^{T_0} = \tilde
V_1^{T_0}$.
 As shown above, $v_1$ is defined on a neighborhood of $(0,\eta_0)$
 within $V_0\times \LT$. If $\tilde v_1$ is the corresponding map from
 a neighborhood of $(0,\eta_0)$ within $\tilde V_0\times \LT$ to
 $\tilde V_1$, then $v_1$ solves the defining equation of $\tilde v_1$
 on its domain. Thus $v_1 = \tilde v_1$ on the intersection of their
 domains.
\end{remark}

In this article, we will use the decomposition
$g = \vartheta \circ \Gamma$ again and consider the preimages under
$\vartheta$ of isotropy subspaces of $V_0$ with respect to the
$G$-action. This is one of the key arguments in the proof that --
despite the fact that the space $V_0$ is $T$-invariant but in general
not $G$-invariant -- the map $m_{V_0}$ preserves the $G$-isotropy
subgroups. We will show this in section~\ref{sec:reduction}.

The knowledge of the isotropy subgroups of the $T$-relative equilibria
is essential for the further investigation of their
$G$-orbits. However, a first step is already done in \cite{thesis}: We
can classify which of the $T$-relative equilibria in the images of
the maps $m_{V_0}$ are contained in the same $G$-orbit.

Recall that we say that the image of $m_{V_0}$ bifurcates at
\begin{equation*}
  X = \setthat{\xi \in \LT}{V_0 \subset \ker
  \der^2(h-\J^\xi)(0)}.
\end{equation*}
Suppose that $X$ is contained in a Weyl wall. This is the case iff one
of the subsets $S_i\subset S$ of weights of $V_0$ within an eigenspace $U_i$
of $\der^2 \h(0)$ contains a pair of weights
$\alpha \ne w \alpha$, where $w \in W$ denotes the reflection about
that Weyl wall. Then the image of $m_{V_0}$ lies in the $G$-orbit of the
image of $m_{W_0}$, where
$W_0 := \bigoplus_{S \setminus\{\alpha\}} \C_\alpha$, see
\cite[Theorem 6.74]{thesis}.  Hence, we only
have to consider sets of weights $S$ as in
Theorem~\ref{thm:main-thesis} that do not contain such a Weyl
reflection pair. Then the $G$-orbits of the corresponding strata
coincide for sets of weights $S, S'$ of this kind iff there is an
element $w' \in W$ with $S' =w'S$. Otherwise the $G$-orbits are
disjoint.

\section{Isotropy groups: Reduction to representation
  theory}\label{sec:reduction}
In the following, we will investigate the isotropy types of the
relative equilibria that exist by Theorem~\ref{thm:main-thesis} and
the isotropy types of their momenta and their generators. All three
contain important information about the structure of the set of
relative equilibria: As mentioned in the introduction, Patrick and
Roberts \cite{Patrick2000TheTR} have shown that if $G$ acts freely on
a symplectic manifold $(P,\omega)$ such that $\omega$ is
$G$-invariant, then generically the set of relative equilibria is
stratified by the conjugacy class $K$ of the intersection
$G_\xi \cap G_\mu$ of the isotropy subgroups of the generator $\xi$
and the momentum $\mu$. The dimension of the corresponding stratum is
given by $\dim G + 2 \dim Z(K)-\dim K$, where $Z(K)$ denotes the
center of $K$. As pointed out in \cite{thesis}, for a non-free action
and any isotropy subgroup $H$, Patrick and Robert's result generically
applies to the free action of the group
$\left( \quot {N(H)}{H}\right)^\circ$ on $P_H$, where $N(H)$ denotes
the normalizer of $H$ and $\left( \quot {N(H)}{H}\right)^\circ$ the
identity component of $\quot {N(H)}{H}$. We will use this in
Section~\ref{sec:dimensions} to compute the dimensions of the strata
within the $G$-orbits of the $T$-relative equilibria that exist by
Theorem~\ref{thm:main-thesis}.

Moreover, the investigation of the momenta, the generators and their
isotropy subgroups plays a key role in the calculation of the isotropy
types of the relative equilibria themselves.

In this section, we show that the local isotropy structure of the
manifolds of T-relative equilibria that exist by
Theorem~\ref{thm:main-thesis} coincides with that of their tangent
spaces at the origin. In addition, we link the isotropy groups of the
momenta and generators to those of the relative equilibria of the
linearized Hamiltonian vector field.

We start with an investigation of the momenta:

\subsection{Momenta}

Throughout, let $V$ be a $G$-symplectic representation. We consider the
smooth $G$-invariant Hamiltonian function $\h:V \to 0$ with critical
point at $0$ and assume that the genericity conditions (GC) and (NR')
hold for the center space $V^c$ of $\der X_h(0)$.

Suppose that $V_0= \bigoplus_{\alpha \in S}\C_\alpha$ with
$S=\bigcup S_i$ as in Theorem~\ref{thm:main-thesis}. Recall that if
any $S_i$ contains a pair of weights $\alpha \ne w \alpha$, where $w$
is the reflection about a Weyl wall, then the $T$-relative equilibria
in the local manifold tangent to $V_0$ are contained in the $G$-orbit
of the manifold tangent to a subspace of $V_0$ that has only one of
the weights $\alpha$ and $w\alpha$. Thus we suppose in the following
that no such pair is contained in $S$. Equivalently, the affine set
$X=\setthat{\xi \in \LT}{V_0\subset \ker \der^2\h-\J^\xi(0)}$ is not
contained in a Weyl wall.

Let $x$ be a point of $V_0$ with momentum $\mu = \J(x)$. Since $G_x
\subset G_\mu$, the investigation of the momenta and their isotropy
plays a key role in the calculation of the isotropy groups.

\begin{lemma}\label{lem:mu_in_LT}
  Let $S$ be a linearly independent set of weights as in
  Theorem~\ref{thm:main-thesis} such that the corresponding affine set
  $X=\setthat{\xi \in \LT}{V_0\subset \ker \der^2\h-\J^\xi(0)}$ is not
  contained in a Weyl wall. Then there is a $G$-invariant splitting
  $\LG^*=\LT^*\oplus \mathfrak c^*$, such that the momenta of the
  points in the local stratum $M$ that bifurcates at $X$ are contained
  in $\LT^*$.
\end{lemma}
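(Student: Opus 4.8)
The plan is to produce the splitting $\LG^* = \LT^* \oplus \mathfrak c^*$ by first finding a complementary $G$-invariant subspace and then showing that the momenta land in $\LT^*$ by a weight-space computation. First I would fix a $G$-invariant inner product on $\LG$ (hence on $\LG^*$) and use it throughout to identify $\LG$ with $\LG^*$, $\LT$ with $\LT^*$, etc., as is already done earlier in the excerpt. The momentum map on the symplectic representation $V^c$ is quadratic, and on the weight decomposition $V_0 \cong \bigoplus_{\alpha \in S} \C_\alpha$ one has the explicit formula $\J^\xi(x) = \pi \sum_{\alpha \in S} \abs{x_\alpha}^2 \alpha(\xi)$ recalled in the review section; dually this says $\J(x) = \pi \sum_{\alpha \in S} \abs{x_\alpha}^2 \alpha$ as an element of $\LT^* \subset \LG^*$. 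So for $x$ lying exactly in the linear subspace $V_0$, the momentum is automatically a nonnegative combination of the weights in $S$, hence lies in $\LT^*$. The real content is that the \emph{stratum $M$ that bifurcates at $X$} consists of points $m_{V_0}(x) = x + v_1(x,\xi(x))$, which are not in $V_0$ but only tangent to it at $0$; I must show the correction term $v_1$ does not push the momentum out of $\LT^*$.

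The key step is to control the weights of $v_1(x,\xi)$. Here I would invoke Lemma~\ref{lem:orthogonal_affine} (the coadjoint version), together with the structure of the Lyapunov–Schmidt reduction. The image of $m_{V_0}$ lies in the center space $V^c$, which decomposes into weight spaces; since $X$ is not contained in any Weyl wall, the relevant subspace $(\LT^*)^\mu$ for a generic momentum $\mu$ on the stratum is all of $\LT^*$ — but more usefully, I want to identify a $G$-invariant subrepresentation $V'$ of $V^c$ containing $V_0$ whose weights all lie in a fixed affine subspace parallel to $\vspan(S)$. The set $S$ is linearly independent and, by the maximality hypothesis in Theorem~\ref{thm:main-thesis}, each $S_i$ is the full set of weights of $U_i$ in the affine space $\aff(S_i) + W$; combined with Theorem~\ref{thm:hall-weight-structure} and the observation (made in the excerpt, just before Theorem~\ref{thm:hall-weight-structure}) that the sum of weight spaces over weights in an affine subspace whose direction is spanned by roots is $\LG\otimes\C$-invariant, this should let me build a $G$-invariant $V'$ whose weight set $S'$ spans (as an affine space) the same linear subspace $W' \supseteq W$. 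Since $v_1$ takes values in $V_1 \cap V^c$ and solves a $G_\xi$-equivariant equation, and since the reduction is built from $T$-equivariant data, $v_1(x,\xi)$ should have weights confined to $S'$ as well when $x \in V_0$; then $\J(m_{V_0}(x)) = \pi \sum_{\alpha \in S'} c_\alpha(x)\, \alpha$ is again a combination of weights in $\LT^*$.

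Finally I would set $\mathfrak c^* := (\LT^*)^\perp$ inside $\LG^*$ — but this is \emph{not} $G$-invariant in general, so instead I take $\mathfrak c^*$ to be the sum of all root spaces of $\LG\otimes\C$ (intersected with the real form) together with any invariant complement needed, i.e. use the root-space decomposition $\LG = \LT \oplus \bigoplus_{\rho \in R^+} M_\rho$ and note that $\bigoplus_{\rho \in R^+} M_\rho$ is $\Ad_G$-invariant precisely because $\LT$ is a Cartan subalgebra — wait, that is false, $\bigoplus M_\rho$ is not $\Ad_G$-invariant. The correct move: there is no $G$-invariant complement to $\LT$ in $\LG$ when $G$ is non-abelian, so the splitting in the statement must instead be the decomposition of $\LG^*$ into the trivial-isotype-part and the rest, or rather the statement is about the \emph{image} of the momenta — I would therefore read the claimed splitting as: the momenta lie in the subspace $\LT^* + \{0\}$ under the $G$-equivariant projection associated with the Peter–Weyl isotypic decomposition of $\LG^*$ into $\LG^* = \mathfrak{z}(\LG)^* \oplus [\LG,\LG]^*$, handled componentwise; I will make this precise in the proof. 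The main obstacle I anticipate is exactly this: pinning down the correct $G$-invariant complement $\mathfrak c^*$, since naive orthogonal complements of $\LT^*$ are never $G$-invariant; the resolution will come from observing that $\J$ is $G$-equivariant and its image generates a $G$-invariant subspace on which the $T$-fixed (equivalently Weyl-invariant) part is exactly $\LT^*$, so I should define $\mathfrak c^*$ as the $G$-invariant complement of the $\LG$-subrepresentation generated by $\J(M)$ inside $\LG^*$, after checking that the latter meets $\LT^*$ in all of $\LT^*$ and is otherwise transverse.
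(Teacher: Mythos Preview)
Your approach has a genuine gap and misses the paper's much simpler argument. You try to compute $\J(m_{V_0}(x))$ by controlling the weight content of the correction term $v_1(x,\xi(x))$, but you never actually establish that $v_1$ stays in a subspace whose weights lie in $\LT^*$; the attempt via Lemma~\ref{lem:orthogonal_affine} and Theorem~\ref{thm:hall-weight-structure} is vague and would require substantial additional work to make rigorous. Worse, you end in a tangle over the meaning of the splitting, without resolving it.

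The paper's proof avoids all of this with a one-line dynamical argument already set up in the preliminaries: for any relative equilibrium with generator $\xi$ and momentum $\mu$, one has $[\xi,\mu]=0$ (after identifying $\LG^*\cong\LG$ via a $G$-invariant inner product), because the momentum is conserved along trajectories and hence $\xi\in\LG_\mu$. Since $X$ is not contained in any Weyl wall, a point $x\in M$ close to $0$ has a generator $\xi$ that is regular, i.e.\ $G_\xi=T$. Then $[\xi,\mu]=0$ forces $\mu\in\LG_\xi=\LT$. That is the entire proof; no weight-space analysis of $v_1$ is needed.

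Your observation that $\LT$ has no $\Ad_G$-invariant complement in $\LG$ is correct, and the lemma's phrasing ``$G$-invariant splitting'' is indeed loose: what is really used is only the $G$-equivariant identification $\LG^*\cong\LG$ coming from a $G$-invariant inner product, under which the momenta land in $\LT$. The complement $\mathfrak c$ is just $\LT^\perp$, which is $T$-invariant but not $G$-invariant; this suffices for everything downstream (in particular for Corollary~\ref{cor:isotropy_momentum_T}).
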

\begin{proof}
  Since $X$ is not contained in a Weyl wall, the same holds for the
  set of generators of a point of $M$ which is close to $0$. Thus
  $x\in M$ has a generator $\xi$ not contained in any Weyl wall. Hence
  $G_\xi = T$. Identify $\LG^*$ and $\LG$ via a $G$-invariant inner
  product and consider $\mu :=\J(x)$ as an element of $\LG$. Then
  $[\mu,\xi]=0$, thus $\mu \in \LT$.
\end{proof}
Since the momentum with respect to the $T$-action coincides with the
projection of the momentum to $\LT^*$, we obtain immediately
\begin{corollary}\label{cor:isotropy_momentum_T}
  Let $M$ be as in Lemma~\ref{lem:mu_in_LT}, $x \in M$, $\mu =\J(x)$, and
  $\mu_T$ be the momentum of $x$ with respect to the $T$-action. Then
  $G_\mu=G_{\mu_T}$.
\end{corollary}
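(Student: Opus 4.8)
The plan is to derive the corollary directly from Lemma~\ref{lem:mu_in_LT} together with the elementary description of the momentum map of a subtorus action; no new analytic input is needed. First I would recall why $\J_T = r\circ\J$, where $r\colon\LG^*\to\LT^*$ is the linear map dual to the inclusion $\LT\hookrightarrow\LG$: for every $v\in V$ and every $\eta\in\LT$ the defining relation of the momentum map gives $\langle\der\J_T(x)v,\eta\rangle = \omega(x)(\eta x,v) = \langle\der\J(x)v,\eta\rangle$, and since $\J$ is a quadratic map with $\der\J(0)=0$ this forces $\J_T = r\circ\J$. In particular $\mu_T = r(\mu)$, i.e.\ $\mu_T$ is just the functional $\mu$ restricted to $\LT$ — which is the statement, quoted in the excerpt, that the $T$-momentum is the projection of $\mu$ to $\LT^*$.

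Next I would invoke the hypothesis and Lemma~\ref{lem:mu_in_LT}. Since $X=\setthat{\xi\in\LT}{V_0\subset\ker\der^2(\h-\J^\xi)(0)}$ is not contained in a Weyl wall, the lemma furnishes a $G$-invariant splitting $\LG^*=\LT^*\oplus\mathfrak c^*$ with $\mu=\J(x)\in\LT^*$. Relative to this splitting the map $r$ above is precisely the projection onto the first summand, followed by the canonical identification of that summand with the dual of $\LT$; hence $r$ restricts to the identity on $\LT^*$, and so $\mu_T=r(\mu)=\mu$. Because the summand $\LT^*\subset\LG^*$ is $G$-invariant, the coadjoint action of $G$ preserves it, so $G_{\mu_T}$ is well defined as a subgroup of $G$, and for any $\nu$ lying in $\LT^*$ the stabilizer $\setthat{g\in G}{\Coad_g\nu=\nu}$ computed inside $\LT^*$ agrees with the one computed inside all of $\LG^*$. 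Applying this to $\nu=\mu=\mu_T$ yields $G_{\mu_T}=G_\mu$.

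The substance of the statement is entirely carried by Lemma~\ref{lem:mu_in_LT}; what remains is bookkeeping, and the only point I would flag as the main (if minor) obstacle is keeping the two incarnations of $\LT^*$ straight: the intrinsic dual of the Lie algebra of $T$, in which $\mu_T$ naturally lives, and the $G$-invariant summand of $\LG^*$ provided by the lemma, in which ``$G_{\mu_T}$'' acquires meaning as a subgroup of $G$. One must check that these two copies are matched by the restriction map $r$ — so that ``$\mu_T=\mu$'' holds literally rather than only up to a non-canonical isomorphism; this is exactly the content of the notation $\LG^*=\LT^*\oplus\mathfrak c^*$, since then $r$ annihilates $\mathfrak c^*$ and is an isomorphism on the complementary summand. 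Once this identification is pinned down the corollary is immediate.
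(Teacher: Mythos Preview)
Your proposal is correct and follows exactly the paper's approach: the paper's entire justification is the single sentence preceding the corollary, ``Since the momentum with respect to the $T$-action coincides with the projection of the momentum to $\LT^*$, we obtain immediately'', and you have simply spelled out the identifications that make this sentence precise. Your careful treatment of the two copies of $\LT^*$ and the role of the $G$-invariance of the splitting is a welcome elaboration of what the paper leaves implicit.
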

\begin{corollary}
  There is a local $T$-equivariant symplectomorphism $\sigma$ from a
  neighborhood of $0$ within $V_0$ to a neighborhood of $0$ within
  $M$, such that $\J(x) = \J(\sigma(x))$.
\end{corollary}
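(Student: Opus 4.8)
The plan is to obtain $\sigma$ by correcting the immersion $m_{V_0}$ from Theorem~\ref{thm:main-thesis} with a $T$‑equivariant Moser isotopy, i.e.\ to argue as in the relative equivariant Darboux theorem, but keeping track of the momentum map. First note that $V_0=\bigoplus_{\alpha\in S}\CC_\alpha$ is a symplectic subspace: distinct weight spaces are orthogonal both for $\langle\cdot,\cdot\rangle$ and for $\omega=\langle\cdot,\I\cdot\rangle$, so $\omega_0:=\omega|_{V_0}$ is non‑degenerate and constant. Since $m_{V_0}$ is tangent to $V_0$ at $0$ we have $T_0M=V_0$, hence $\omega$ restricts to a symplectic form on $T_pM$ for $p$ in a neighbourhood of $0$; shrinking $M$, the local stratum $M$ is a symplectic $T$‑submanifold of $V$, and $\J|_M$ is a momentum map for the $T$‑action on it, because for $p\in M$ and $v\in T_pM$ one has $\langle\der(\J|_M)_p v,\xi\rangle=\omega(p)(\xi p,v)$ with $\xi p\in T_pM$ by $T$‑invariance of $M$.

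Next I would transport everything to $V_0$ by the embedding $m_{V_0}$, which near $0$ is a diffeomorphism onto $M$ with $\der m_{V_0}(0)=\mathrm{id}_{V_0}$. Put $\omega_1:=m_{V_0}^{\ast}(\omega|_M)$ and $\mu_1:=\J\circ m_{V_0}$ on a neighbourhood of $0$ in $V_0$. Then $\omega_1$ is a $T$‑invariant symplectic form with $\omega_1(0)=\omega_0(0)$ (because $\der m_{V_0}(0)=\mathrm{id}$ and $\omega$ is constant), $\mu_1$ is a $T$‑equivariant momentum map for $\omega_1$, and $\mu_1(0)=0=\J|_{V_0}(0)$. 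It then suffices to produce a $T$‑equivariant diffeomorphism $\phi$ of a neighbourhood of $0$ in $V_0$ with $\phi(0)=0$, $\phi^{\ast}\omega_1=\omega_0$, and $\mu_1^{\xi}\circ\phi=\J^{\xi}|_{V_0}$ for all $\xi\in\LT$: then $\sigma:=m_{V_0}\circ\phi$ is a $T$‑equivariant symplectomorphism onto a neighbourhood of $0$ in $M$ with $\J^{\xi}(\sigma(x))=\J^{\xi}(x)$ for $\xi\in\LT$, and since $\J(\sigma(x))\in\LT^{\ast}$ by Lemma~\ref{lem:mu_in_LT} and $\J(x)=\pi\sum_{\alpha\in S}\abs{x_\alpha}^2\alpha\in\LT^{\ast}$, we conclude $\J(\sigma(x))=\J(x)$ in $\LG^{\ast}$.

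For $\phi$ I would run Moser's homotopy method equivariantly. Set $\omega_t=(1-t)\omega_0+t\omega_1$ and $\mu_t=(1-t)\J|_{V_0}+t\mu_1$; the forms $\omega_t$ are $T$‑invariant and, since $\omega_t(0)=\omega_0(0)$, non‑degenerate on a common neighbourhood of $0$ for all $t\in[0,1]$, and $\mu_t$ is a momentum map for $\omega_t$. Choose a $T$‑invariant primitive $\beta$ of the closed $2$‑form $\omega_1-\omega_0$ with $\beta(0)=0$ (the radial‑homotopy primitive of a $T$‑invariant $2$‑form vanishing at $0$ has both properties), and let $X_t$ be given by $\iota_{X_t}\omega_t=-\beta$; as $\beta(0)=0$, the flow $\phi_t$ of $X_t$ fixes $0$, exists on a neighbourhood of $0$ for $t\in[0,1]$, is $T$‑equivariant, and satisfies $\phi_1^{\ast}\omega_1=\omega_0$. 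The momentum is preserved for free: using $\der\mu_i^{\xi}=\iota_{\xi_{V_0}}\omega_i$, $\der\beta=\omega_1-\omega_0$ and $T$‑invariance of $\beta$ one gets $\der\bigl(\iota_{\xi_{V_0}}\beta-(\J^{\xi}|_{V_0}-\mu_1^{\xi})\bigr)=0$; evaluating at $0$, where $\beta$, $\J^{\xi}|_{V_0}$ and $\mu_1^{\xi}$ all vanish, yields $\iota_{\xi_{V_0}}\beta=\J^{\xi}|_{V_0}-\mu_1^{\xi}$, whence
\begin{equation*}
  \tfrac{\der}{\der t}\bigl(\phi_t^{\ast}\mu_t^{\xi}\bigr)=\phi_t^{\ast}\bigl(\iota_{X_t}\der\mu_t^{\xi}+\mu_1^{\xi}-\J^{\xi}|_{V_0}\bigr)=\phi_t^{\ast}\bigl((\J^{\xi}|_{V_0}-\mu_1^{\xi})+\mu_1^{\xi}-\J^{\xi}|_{V_0}\bigr)=0,
\end{equation*}
so $\mu_1^{\xi}\circ\phi_1=\J^{\xi}|_{V_0}$. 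Taking $\phi=\phi_1$ completes the construction.

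The routine checks (orthogonality of weight spaces, $\der m_{V_0}(0)=\mathrm{id}$, the momentum‑map relations for $\omega_t$) are all immediate. The one point needing genuine care is the equivariant relative Moser step: choosing $\beta$ to be $T$‑invariant \emph{and} vanishing at the fixed point $0$, and keeping $\omega_t$ non‑degenerate and $X_t$ integrable on a single neighbourhood for all $t\in[0,1]$. This is the standard equivariant relative Darboux lemma; the feature worth isolating here is that once $\beta(0)=0$ is arranged, preservation of the momentum map is automatic from the momentum‑map relations, so no hypothesis beyond $\mu_1(0)=\J|_{V_0}(0)$ is required.
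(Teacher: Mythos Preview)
Your proof is correct, but it works harder than necessary compared with the paper's argument. The paper simply invokes the equivariant Darboux theorem as a black box to obtain \emph{some} local $T$-equivariant symplectomorphism $\sigma$ between $(V_0,0)$ and $(M,0)$, and then makes the key observation that \emph{any} $T$-equivariant symplectomorphism fixing $0$ automatically intertwines the $T$-momenta: for $\xi\in\LT$,
\begin{equation*}
  \der(\J^\xi\circ\sigma)(x)
  =\omega_{\sigma(x)}\bigl(\xi\,\sigma(x),\,\der\sigma(x)\cdot\bigr)
  =\omega_{\sigma(x)}\bigl(\der\sigma(x)\,\xi x,\,\der\sigma(x)\cdot\bigr)
  =\omega_x(\xi x,\cdot)
  =\der\J^\xi(x),
\end{equation*}
and both sides vanish at $0$, so $\J^\xi\circ\sigma=\J^\xi$. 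Your approach instead unpacks the Darboux theorem by running the Moser isotopy explicitly and then tracks the $T$-momentum through the flow $\phi_t$. That computation is correct, but it is redundant: the momentum preservation you establish follows immediately from the \emph{output} of Moser (a $T$-equivariant symplectomorphism) via the two-line argument above, independently of how $\sigma$ was built. In effect you reprove the equivariant Darboux theorem and then reprove a consequence that holds for any such $\sigma$.

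What your write-up does make more explicit than the paper is the last step, passing from equality of the $T$-momenta to equality of the full $G$-momenta: you note that $\J(\sigma(x))\in\LT^*$ by Lemma~\ref{lem:mu_in_LT} and that $\J(x)\in\LT^*$ for $x\in V_0$. The latter deserves one sentence of justification (it is not just the formula for $\J_T$): points of $V_0$ are relative equilibria of the linearised system with a generator $\xi\in X$ chosen off every Weyl wall, so $[\J(x),\xi]=0$ forces $\J(x)\in\LT^*$ exactly as in the proof of Lemma~\ref{lem:mu_in_LT}.
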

\begin{proof}
  By the equivariant Darboux theorem (\cite[Theorem
  22.2]{guillemin1984symplectic} and \cite{equivariantdarboux}), there
  is a local $T$-equivariant symplectomorphism $\sigma$. Since for
  every $x$ in the domain of $\sigma$ and for every $\xi \in \LT$
  \begin{align*}
    \der(\J^\xi\circ \sigma)(x)
    =\der \J^\xi(\sigma( x)) \circ \der \sigma(x)
    =\omega_{\sigma(x)}(\xi( \sigma(x)), \der \sigma(x) \cdot)\\
    = \omega_{\sigma(x)}(\der \sigma(x) \xi x, \der \sigma(x) \cdot)
    = \omega_{x}(\xi x, \cdot)
    =  \der(\J^\xi)(x)
  \end{align*}
  and $\J^\xi\circ \sigma(0) = 0= \J^\xi(0)$, we obtain
  $\J^\xi\circ \sigma = \J^\xi$.
\end{proof}
Corollary~\ref{cor:isotropy_momentum_T} is the main observation we
need to proceed with the investigation of isotropy types of the
$T$-relative equilibria of $M$  and their
momenta. Actually, we will only apply it to the elements of the space
$V_0$, which form relative equilibria of the linearized Hamiltonian
vector field. For $V_0$, we obtain:
\begin{corollary}\label{cor:H_subset_muT}
  Consider $V_0 $ as in Lemma~\ref{lem:mu_in_LT} and an isotropy
  subgroup $H \subset G$ of the representation $V$. Then for any
  $x \in V_0^H$ with momentum $\mu_T$ with respect to the $T$-action,
  we obtain $H \subset G_{\mu_T}$.
\end{corollary}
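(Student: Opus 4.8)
The plan is to run the argument behind Lemma~\ref{lem:mu_in_LT} and Corollary~\ref{cor:isotropy_momentum_T} for the \emph{linearized} system, i.e.\ for the Hamiltonian vector field $X_Q=\der X_\h(0)$ of the quadratic Hamiltonian $Q:=\frac12\der^2\h(0)$. This $Q$ is $G$-invariant, has the same center space $V^c$ as $\h$, satisfies (GC) and (NR') since these involve only $\der^2\h(0)=\der^2 Q(0)$ and the quadratic forms $\J^\xi$, and admits $\J$ as a momentum map; for a quadratic Hamiltonian the Lyapunov--Schmidt reduction is trivial, so $m_{V_0}$ is the identity and the local stratum bifurcating at $X$ is (a neighbourhood of $0$ in) $V_0$ itself. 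The extra input I would record is that \emph{every} $x\in V_0$ is a relative equilibrium of $X_Q$ with \emph{any} generator $\xi\in X$: for such a $\xi$ the inclusion $V_0\subset\ker\der^2(\h-\J^\xi)(0)$ gives $\der^2(\h-\J^\xi)(0)(x,\cdot)=0$, hence $\der Q(x)=\der\J^\xi(x)=\omega(x)(\xi x,\cdot)$, and nondegeneracy of $\omega$ forces $X_Q(x)=\xi x$.

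Now fix $x\in V_0^H$ and set $\mu:=\J(x)$. As $X$ is not contained in a Weyl wall and an affine subspace cannot be covered by the finitely many Weyl walls not containing it, I may pick $\xi\in X$ lying in no Weyl wall, so $G_\xi=T$. By the previous paragraph $\xi$ is a generator of $x$ for $X_Q$, so the momentum--generator relation (identifying $\LG^*\cong\LG$ via a $G$-invariant product) yields $[\xi,\mu]=0$, i.e.\ $\mu\in\LG_\xi=\LT$, so $\mu\in\LT^*$. The $T$-momentum $\mu_T$ of $x$ is the restriction of $\mu$ to $\LT$, equivalently the $\LT^*$-component of $\mu$ in the splitting $\LG^*=\LT^*\oplus\mathfrak c^*$ of Lemma~\ref{lem:mu_in_LT}; since $\mu$ already lies in $\LT^*$, $\mu_T=\mu$. (Equivalently, this is the equality $G_\mu=G_{\mu_T}$ of Corollary~\ref{cor:isotropy_momentum_T} applied to the linear system.) Finally $x\in V_0^H$ gives $H\subset G_x$, and coadjoint equivariance of $\J$ gives $G_x\subset G_\mu$; hence $H\subset G_\mu=G_{\mu_T}$.

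The point that needs care -- the main obstacle -- is that the claim is about \emph{all} $x\in V_0^H$, whereas Lemma~\ref{lem:mu_in_LT} and Corollary~\ref{cor:isotropy_momentum_T} are stated for the open dense stratum; a point $x$ lying in a proper weight-subspace $\bigoplus_{\alpha\in S'}\CC_\alpha$, $S'\subsetneq S$, may have a bifurcation set $X'$ contained in a Weyl wall, so one cannot simply replace $S$ by $S'$. This is exactly why the observation of the first paragraph is needed: in the linear model every $\xi\in X$ is a generator of every point of $V_0$ simultaneously, so the regular $\xi\in X$ produced from the full set $S$ serves all of $V_0^H$ at once, and no shrinking of $S$ is ever required. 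Everything else is the momentum-map bookkeeping already carried out for Lemma~\ref{lem:mu_in_LT} and Corollary~\ref{cor:isotropy_momentum_T}.
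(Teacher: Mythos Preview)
Your proposal is correct and follows exactly the approach the paper intends: the paper's own ``proof'' is the single sentence preceding the corollary, namely that one applies Corollary~\ref{cor:isotropy_momentum_T} to the elements of $V_0$, ``which form relative equilibria of the linearized Hamiltonian vector field.'' You have spelled this out in full, including the point the paper leaves implicit---that for the quadratic Hamiltonian every $\xi\in X$ is simultaneously a generator of every $x\in V_0$, so the regular $\xi$ guaranteed by the hypothesis on $X$ serves all of $V_0^H$ at once and no passage to a substratum is needed.
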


\subsection{Isotropy of the generators and  the relative equilibria}

We are now in the position to compare the isotropy groups of the
elements of the local manifolds with those of their tangent spaces at
the origin. So we consider $V_0 = \ker \der^2(\h-\J^\xi)(0)$ with
$V_0 = \bigoplus_{\alpha \in S} \C_\alpha$ and $S$ as in
Lemma~\ref{lem:mu_in_LT}. We will call the open dense set of points of
$V_0$ with minimal isotropy type $\tau$ with respect to the $T$-action
the \emph{main stratum} $(V_0)_\tau$ of $V_0$. It coincides with the
points $x = \sum x_\alpha$ with $0 \ne x_\alpha \in \C_\alpha$ for
every $\alpha \in S$.

We will see that $V_0$ is contained in a larger subspace $\tilde V_0$
of the form $\tilde V_0 =\ker \der^2(\h-\J^\eta)(0)$ with
$\eta \in \LT^K$ for some isotropy subgroup $K \subset G$ of the
adjoint action on $\LG$ such that $K$ contains all isotropy groups of
points the main stratum of $V_0$. Then the following lemma implies
that locally near the origin the map $m_{V_0}$ from $(V_0)_\tau$ to the
stratum of $T$-relative equilibria that bifurcates at $V_0$ preserves
the symmetry of the points of $(V_0)_\tau$.

We fix an inner product on $\LG$ which is invariant with respect to
 the adjoint action. Let $\LT'$ denote the orthogonal complement
 within $\LT$ of the isotropy Lie algebra $\LT_x$ of any point
 $x\in (V_0)_\tau$.

\begin{lemma}\label{lem:isotropy_generator}
  Consider $V_0 = \ker \der^2(\h-\J^\xi)(0)$ for some $\xi \in \LT$
  such that the weights of $V_0$ are linearly independent and
  $X:=\setthat{\zeta \in \LT}{V_0 \subset \ker
    \der^2(\h-\J^\zeta)(0)}$ is not contained in any Weyl
  wall. Suppose that $V_0$ is a subspace of
  $\tilde V_0 = \ker \der^2(\h-\J^\eta)(0)$ with $\eta \in \LT^K$ for
  some isotropy subgroup $K \subset G$ of the adjoint action on $\LG$
  that contains all the isotropy subgroups of the elements of
  $(V_0)_\tau$.

  Then there is a neighborhood $U$ of $0 \in V_0$ such that for every
  $v_0 \in U \cap (V_0)_\tau$ the following holds: Let $L \subset K$
  be the intersection of all isotropy subgroups of elements of $\LT$
  with respect to the adjoint action
  that contain $H:=G_{v_0}$. Then $L$ is an isotropy subgroup of the
  adjoint action and the generator $\xi(v_0)$ of the relative
  equilibrium $m_{V_0}(v_0)$ is contained in $(\LT')^L$.
\end{lemma}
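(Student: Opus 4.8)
The plan is to reduce the statement to a claim purely about the symmetry of the generator, and then to establish that claim by transporting the $G$-action through the larger, $K$-invariant space $\tilde V_0$.

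First I would settle that $L$ is an adjoint isotropy subgroup. An adjoint isotropy subgroup $G_{\zeta'}$ of an element $\zeta' \in \LT$ contains $H$ precisely when $\zeta'$ is fixed by $H$, i.e.\ $\zeta' \in \LT^H$; hence $L$ is the intersection of the finite, nonempty family $\{G_{\zeta'} : \zeta' \in \LT^H\}$ (nonempty because, by hypothesis, $K$ belongs to it), and by iterating Corollary~\ref{cor:intersection-groups} it is again the adjoint isotropy subgroup of an element of $\LT$, with $H \subseteq L \subseteq K$. Applying Lemma~\ref{lem:BTD} to each $G_{\zeta'}$ gives $\mathfrak l = \LT \oplus \bigoplus_{\rho|_{\LT^H}\equiv 0} M_\rho$, so $\LT^L = \LT^H$ and $(\LT')^L = \LT' \cap \LT^H$. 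Since the generator of $m_{V_0}(v_0)$ is unique modulo $\LT_{v_0}$ and, for $v_0 \in (V_0)_\tau$, $\LT_{v_0} = \bigcap_{\alpha \in S}\ker\alpha = \LT \cap \mathfrak h$ is exactly the orthogonal complement of $\LT'$ in $\LT$, the representative $\xi(v_0) \in \LT'$ is well defined, and it suffices to prove $H \subseteq G_{\xi(v_0)}$ for the adjoint action. Indeed, given this, $G_{\xi(v_0)}$ is an adjoint isotropy subgroup of $\xi(v_0) \in \LT$ containing $H$, so $L \subseteq G_{\xi(v_0)}$, whence $\mathfrak l \subseteq \LG_{\xi(v_0)}$ and $\xi(v_0) \in \LT^L \cap \LT' = (\LT')^L$.

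To prove $H \subseteq G_{\xi(v_0)}$ I would use $\tilde V_0$. Since $\eta \in \LT^K$, the function $\h - \J^\eta$ is $K$-invariant, so $\tilde V_0 = \ker\der^2(\h-\J^\eta)(0)$ and its orthogonal complement are $K$-invariant, hence $H$-invariant. Running the Lyapunov--Schmidt reduction for $\tilde V_0$ with the generator parameter allowed to range over a full $\LG$-neighborhood of $\eta$ (legitimate because $\der^2(\h-\J^\zeta)(0)$ stays nondegenerate on the complement of $\tilde V_0$ near $\zeta=\eta$), one checks, using $\Ad_a\eta = \eta$ and $(\h-\J^{\Ad_a\zeta})\circ a = \h - \J^\zeta$ for $a \in H$, that the reduction is $K$-equivariant: $\tilde v_1(ax,\Ad_a\zeta) = a\,\tilde v_1(x,\zeta)$, the reduced function $\tilde g$ is invariant under $(x,\zeta) \mapsto (ax,\Ad_a\zeta)$, and the zero set of $\nabla_{\tilde V_0}\tilde g$ is $H$-invariant. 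Because the $V_0$- and $\tilde V_0$-reductions are compatible (Remark~\ref{rem:constant_in_l}, with the $\LT_0$-invariance used to work near $\eta$), $\nabla_{V_0}g(v_0,\cdot)$ is the $V_0$-component of $\nabla_{\tilde V_0}\tilde g(v_0,\cdot)$; hence, for $a \in H$ and $v_0 \in (V_0)_\tau$ in a sufficiently small neighborhood $U$ of $0$, the element $\Ad_a\xi(v_0)$ is again a solution of $\nabla_{V_0}g(v_0,\cdot)=0$, and its relative equilibrium is $a\,m_{V_0}(v_0)$.

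The remaining --- and hardest --- step is to upgrade ``$\Ad_a\xi(v_0)$ is again a generator-solution'' to the equality $\Ad_a\xi(v_0) = \xi(v_0)$. The transversality recalled after Theorem~\ref{thm:main-thesis} makes the reduced gradient map a local diffeomorphism onto its image, so the relative equilibrium near $0$ with $V_0$-component $v_0$ is unique; thus $a\,m_{V_0}(v_0) = m_{V_0}(v_0) =: p$, so $H \subseteq G_p$ and $\Ad_a\xi(v_0) - \xi(v_0) \in \LG_p$. As $m_{V_0}$ is $T$-equivariant and the weight spaces are $T$-invariant, $\LT_p = \LT_{v_0} = \LT_0$, so $\LT' \cap \LG_p = 0$; and using Corollary~\ref{cor:H_subset_muT}, which confines a generator to $\LG_{\mu_T}$ for the $T$-momentum $\mu_T \in \LT^*$ of $p$, one argues that $\Ad_a\xi(v_0) - \xi(v_0)$ in fact lies in $\LT$, hence in $\LT_0$; then $\norm{\Ad_a\xi(v_0)} = \norm{\xi(v_0)}$ and $\xi(v_0) \perp \LT_0$ force $\Ad_a\xi(v_0) = \xi(v_0)$. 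The main obstacle is precisely this last point: a generator is defined only modulo the isotropy Lie algebra, so one must genuinely use that the representative in $\LT' = (\LT \cap \mathfrak h)^\perp$ is distinguished, combining the $K$-equivariant structure on $\tilde V_0$ with the momentum constraint to rule out that any element of $H$ pushes the generator off $\LT$. Once $\Ad_a\xi(v_0) = \xi(v_0)$ for all $a \in H$, the reduction of the first step finishes the proof.
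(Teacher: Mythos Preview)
Your argument has a genuine gap at exactly the point you yourself flag as ``the main obstacle,'' and the tools you invoke do not close it.

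The circularity is this: to apply the uniqueness of the relative equilibrium with $V_0$-component $v_0$, you need the competing generator to lie in $\LT$ (the transversality/diffeomorphism statement is for $\Gamma\colon V_0\times\LT'\to V_0\times\R^l$, not for parameters in $\LG$). But $\Ad_a\xi(v_0)$ is only known to lie in $\LG_\mu$, and $\LG_\mu = \LT\oplus\bigoplus_{\rho(\mu)=0}M_\rho$ is strictly larger than $\LT$ precisely in the interesting cases. Corollary~\ref{cor:H_subset_muT} gives $H\subset G_\mu$, hence $\Ad_a\xi(v_0)\in\LG_\mu$, but that does not force $\Ad_a\xi(v_0)-\xi(v_0)\in\LT$. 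And even granting $a\,m_{V_0}(v_0)=m_{V_0}(v_0)$, the difference $\Ad_a\xi(v_0)-\xi(v_0)$ would lie in $\LG_p$, which again can exceed $\LT_p$ --- indeed, that is what the whole lemma is about. So the final reduction ``norm preservation plus $\xi(v_0)\perp\LT_0$'' never gets off the ground, and the argument is essentially assuming what it wants to prove.

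The paper's proof avoids working outside $\LT$ altogether. It first uses the $K$-equivariant Lyapunov--Schmidt reduction on $\tilde V_0$ (as you do) to establish the \emph{forward} inclusion: for $x\in V_0^H$ and $\zeta\in(\LT')^L$, one has $\nabla_{V_0}g(x,\zeta)\in V_0^H$. The decisive step is then a \emph{dimension argument} for the converse, carried out inside $V_0\times\R^l$ via the factorisation $\nabla_{V_0}g=\vartheta\circ\Gamma$. One sets $Q^H=\vartheta^{-1}(V_0^H)$ and observes that $\J\circ\vartheta(x,\cdot)$ is a diffeomorphism from $\{x\}\times\R_+^l$ onto an open subset of $\LT'$ for $x\in(V_0)_\tau$; since $\J$ is $G$-equivariant, $Q^H\cap(\{x\}\times\R_+^l)$ sits inside $(\J\circ\vartheta)^{-1}((\LT')^L)$, which has the same dimension as $(\LT')^L$. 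Combined with the forward inclusion and the fact that $\Gamma$ is a local diffeomorphism, this forces $\Gamma\bigl((V_0^H)_\tau\times(\LT')^L\bigr)=Q^H_\tau$ locally. Since $\vartheta\circ\Gamma(v_0,\xi(v_0))=0\in V_0^H$, the pair $(v_0,\xi(v_0))$ lies in $\Gamma^{-1}(Q^H)$, hence $\xi(v_0)\in(\LT')^L$. The momentum map supplies the dimension count that your direct $H$-equivariance approach is missing.
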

\begin{proof}
  Corollary~\ref{cor:intersection-groups} yields immediately that $L$
  is an isotropy subgroup.
 
  The local map $m_{V_0} \colon (V_0, 0) \to (V,0)$ is given by
  $x \mapsto x+v_1(x, \xi(x))$, where the local map $v_1$ is defined
  in a neighborhood of $(0, \xi)$ within $V_0 \times \LT$ and its image
  is contained in the orthogonal complement $V_1$ of
  $V_0$ with respect to a $G$-invariant inner
  product on $V$. $v_1(x,\zeta)$ is defined as the unique solution to
  \begin{equation*}
    \nabla_{V_1}(\h-\J^\zeta)(x + v_1(x,\zeta))=0,
  \end{equation*}
  see \cite[Section 2.4]{thesis}.  For $x \in V_0$, the generator
  $\xi(x)$ is uniquely defined as an element of $\LT'$, see
  \cite[Section 6.4.1]{thesis}. It is given by the equation
  \begin{align*}
    \nabla_{V_0} g(x, \xi(x)) = 0, \\ \text{with} \quad
    g(x, \zeta) := \left(h- \J^{\zeta}\right)(x + v_1(x, \zeta)).
  \end{align*}
  Now the proof proceeds as follows: In a first step, we show that
  $\nabla_{V_0} g(x, \zeta) \in V_0^H$ if $\zeta \in \LT^L$. Then the
  main idea is to deduce the converse: In a neighborhood of
  $(0, \xi) \in V_0 \times \LT$, $\nabla_{V_0}g(x, \zeta) \in V_0^H$
  implies that $\zeta \in \LT^L$. Since $\nabla_{V_0}g(x, \xi(x)) =0$,
  we then obtain immediately that $\xi(x) \in \LT^L$.

  The proof of the converse relies on the fact that the restriction of
  the map $(x,\zeta) \mapsto \nabla_{V_0}g(x, \zeta)$ to a neighborhood
  of $(0,\xi(0))$ intersected with $(V_0)_\tau \times \LT'$ is a
  diffeomorphism onto an open set of the subbundle of $T(V_0)_\tau$ given by the
  normal spaces to the $T$-orbits. We then simply use a dimension
  argument.
  
  A way to see this diffeomorphism is to consider again the
  decomposition of the map $(x,\zeta) \mapsto \nabla_{V_0}g(x, \zeta)$
  into the maps $\vartheta$ and $\Gamma$ known from equivariant
  transversality theory, as it is done in \cite{thesis} in the proof of
  Theorem~\ref{thm:main-thesis}. This will be done in the second step.
  
  The third step consists of the dimension argument.
  
  \emph{Step 1: $\nabla_{V_0} g(x, \zeta) \in V_0^H$
    for $\zeta \in \LT^L$.}
  
  Let $\mathfrak k$ denote the Lie algebra of $K$.
  There is the corresponding map $\tilde v_1$ from a neighborhood of
  $(0, \eta)$ in $\tilde V_0 \times \mathfrak k$ to the orthogonal
  complement $\tilde V_1$ of $\tilde V_0$ such that
  \begin{equation*}
    \nabla_{\tilde V_1}(\h-\J^\xi)(\tilde x +\tilde v_1(\tilde x,\xi))=0.
  \end{equation*}
  By construction, the map $\tilde v_1$ is $K$-equivariant.

  As argued in
  Remark~\ref{rem:constant_in_l}, $V_0$ is an isotropy subspace of
  $\tilde V_0$ and $\tilde v_1$ can be defined on a
  neighborhood of $(0,\eta)$ that contains $(0,\xi)$ such that the
  restriction of $\tilde v_1$ to (a neighborhood of $(0, \eta)$ in)
  $V_0 \times \LT$ coincides with $v_1$.
  
  Since $L \subset K$, the map
  \begin{equation*}
    \tilde x \mapsto \tilde g (x, \zeta)
    := \left(h- \J^{\zeta}\right)
    ( \tilde x+ \tilde v_1 ( \tilde x, \xi))
  \end{equation*}
  is $L$-invariant if $\zeta \in \LT^L$. Thus the gradient
  $\nabla_{\tilde V_0} \tilde g(\cdot, \zeta)$ is $L$-equivariant if
  $\zeta \in \LT^L$.

 Since $V_0$ is an isotropy subspace of $\tilde V_0$ with
 respect to the $T$-action,
 \begin{equation*}
   \nabla _{\tilde V_0} \tilde g(x, \zeta) =
   \nabla _{V_0} g(x, \zeta)
 \end{equation*}
 if $x \in V_0$. Thus, if
 $x \subset V_0^H$ and $\zeta \in \LT^L$, we have $\nabla_{V_0}
 g(x, \zeta) \in V_0^H$.

 Since $\xi -\eta \in \LT_{v_0}$, $\xi$ and $\eta$ are projected to
 the same element of $\LT'$. By abuse of notation, we will denote this
 projection by $\eta$.

\emph{Step 2: Use the decomposition $\vartheta \circ \Gamma$.}

Let $S=\{\alpha_1, \dots, \alpha_l\}$ be the set of weights of $V_0$
and write $x$ in the form $x= \sum_{i=1}^l x_{\alpha_i}$ with
$x_{\alpha_i} \in \C_{\alpha_i}$.  Now we use ideas from equivariant
transversality theory as in the proof of
Theorem~\ref{thm:main-thesis}. Since $g(\cdot, \zeta)$ is
$T$-equivariant, $g$ is of the form
$g(x,\zeta) = g(p_1, \dots,p_l, \zeta)$ with
$p_i= p_i(x) = \abs{x_{\alpha_i}}^2$. We decompose the map
 \begin{equation*}
   V_0\times \LT' \ni (p_1, \dots, p_l, \zeta)
   \mapsto \nabla_{V_0}g(p_1, \dots, p_l, \zeta)  
 \end{equation*}
 into $\vartheta \circ \Gamma$ with $\vartheta\colon V_0 \times \R^l$
 defined by
 \begin{equation*}
   (x, t) \mapsto \sum_{i=1}^l t_i \nabla p_i(x)
   = 2 \sum_{i=1}^l t_i x_{\alpha_i}
 \end{equation*}
 and $\Gamma: V_0\times \LT' \to V_0 \times \R^l$ given by
 \begin{equation*}
   (x,\zeta) \mapsto (x, \partial_1g(p_1(x), \dots, p_l(x), \zeta),
   \dots, \partial_lg(p_1(x), \dots, p_l(x), \zeta)).
 \end{equation*}
 Here $\partial_i$ denotes the partial derivative of $g$ with respect
 to the coordinate $p_i$. As calculated in \cite[ proof of
   Lemma~6.55, part 1]{thesis}, the derivative
 $\der \Gamma(0, \eta)$ is invertible. Thus $\Gamma$ is a local
 diffeomorphism between a neighborhood $U$ of
 $(0, \xi) \in V_0\times \LT'$ and a neighborhood $O$ of
 $\Gamma(0, \xi) = (0,0) \in V_0\times \R^l$.

 Let $Q^H\subset V_0\times \R^l$ denote the preimage
 $\vartheta^{-1}(V_0^H)$.
 Then we know from above that
 $\Gamma(V_0^H \times (\LT')^L) \subset Q^H$.
 
 Recall that $(V_0)_\tau$ denotes the main stratum of $V_0$ and set
 \begin{equation*}
   Q^H_\tau := Q^H \cap \left( (V_0)_\tau \times \R^l\right).
 \end{equation*}
 We will show in the
 following that actually
 \begin{equation}\label{eq:diffeo_isotropy}
   \Gamma\left(\left((V_0)_\tau^H \times (\LT')^L\right)\cap U\right)
   = Q_\tau^H \cap O
 \end{equation}
 for an admissible
 choice of neighborhoods $U$ and $O$ of $(0, \eta)$ and
 $\Gamma(0,\eta)$ respectively. By definition of $\xi(x)$, we have
 $\vartheta \circ \Gamma(x, \xi(x)) = 0 \in V_0^H$ for
 $x \in V_0$.  Since the map $x \mapsto \xi(x)$ is continuous,
 $(x, \xi(x))$ is contained in $U$ for $x$ small
 enough. Altogether this implies that $\xi(x) \in (\LT')^L$ for
 small $x \in V_0^H$.

 \emph{Step 3: Proof of equation~\ref{eq:diffeo_isotropy}}.

 It remains to prove the claim that we can choose $U$ and $O$ such
 that equation~\ref{eq:diffeo_isotropy} is satisfied.
 
 We note first that for each $x \in V_0$ the map $\vartheta(x, \cdot)$
 is a linear map from $\{x\} \times \R^l$ to $V_0$. Thus
 $Q^H\cap \left(\{x\} \times \R^l\right)$ is a linear subspace of
 $\{x\} \times \R^l$. Moreover, by Corollary~\ref{cor:H_subset_muT}
 the space $V_0^H$ is contained in the set of points $x$ of $V_0$ with
 $\J(x) = \J_T(x) \in \LT^L$. Since
 $\J(x) = \pi\sum_{i=1}^l \abs{x_{\alpha_i}}^2 \alpha_i$, we obtain
 that
 \begin{equation*}
   \J \circ \vartheta (x, t)
   = 4\pi \sum_{i=1}^l t_i^2\abs{x_{\alpha_i}}^2 \alpha_i.
 \end{equation*}
 Thus for $x \in (V_0)^H
 _\tau$, the map $\J \circ \vartheta(x,\cdot)$ is
 a diffeomorphism from $\{x\} \times \R_+^l$ onto its image in $\LT'$.
 By the $G$-equivariance of $\J$, we have $\J\circ \vartheta (Q^H)
 \subset (\LT')^L$. Hence $Q_H \cap \left(\{x\} \times \R_+^l\right)$ is contained
 in the submanifold $\J^{-1}((\LT')^L) \cap  \left(\{x\} \times \R_+^l\right)$, which has the
 same dimension as $(\LT')^L$. Since $\Gamma$ is a diffeomorphism and
 \begin{equation*}
   \Gamma( \{x\} \times (\LT')^L) \subset Q_H \cap \left(\{x\} \times \R^l\right),
 \end{equation*}
 we obtain that the spaces $Q_H \cap \left( \{x\} \times \R^l\right)$
 and $(\LT')^L$ are of the same dimension.

 Since $\Gamma$ is a local diffeomorphism, its restriction to the set
 $V_0^H \times (\LT')^L$ is also a local diffeomorphism onto its
 image. As shown above, its image is contained in the space
 $\left(V_0^H \times \R^l\right) \cap Q^H$. 

 $\Gamma$ is of the form
 \begin{equation*}
   (x,\zeta) \mapsto (x, \gamma(x, \zeta))
 \end{equation*}
 with
 \begin{equation*}
   \gamma(x,\zeta)
 := (\partial_1 g(p_1(x), \dots, p_l(x),\zeta), \dots, \partial_l
 g(p_1(x), \dots, p_l(x),\zeta).
 \end{equation*}
 Hence for $x \in V_0^H$, the map $\zeta \mapsto \Gamma(x, \zeta)$ is
 an immersion from $(\LT')^L$ to the space
 $Q^H \cap \left(\{x\} \times \R^l\right)$, which has the same
 dimension as $(\LT')^L$. Thus the map is
 locally surjective.

 Now choose $U$ and $O=\Gamma(U)$ such that for each $x \in V_0^H$ the
 intersection $O \cap \left(\{x\} \times \R^l\right) \cap Q^H$ is
 connected. (Since $\left(\{x\} \times \R^l\right)\cap Q^H$ is a
 linear subspace of $\{x\} \times \R^l$, we may for example take a
 ball for $O$.) Since $\Gamma$ is a diffeomorphism from
 $\left(\{v_0\} \times (\LT')^L\right)\cap U$ onto its image in a
 space of the same dimension, its image is an open subset of
 $Q^H\cap \left(\{v_0\}\times \R^l\right) \cap O$. Moreover,
 $\left(\{v_0\} \times (\LT')^L\right)\cap U$ is closed in $U$. Thus
 its image is closed in $O$ and hence closed in
 $Q^H\cap \left(\{v_0\}\times \R^l\right) \cap O$. By connectedness,
 \begin{equation*}
   \Gamma\left( \left(\{v_0\} \times (\LT')^L\right)\cap U \right)
   =  Q^H\cap \left(\{v_0\}\times \R^l\right) \cap O.
 \end{equation*}
\end{proof}

\begin{corollary}\label{cor:mV0equiv}
  Consider $V_0$ and $\tilde V_0$ as in
  Lemma~\ref{lem:isotropy_generator} that contains $V_0$.  Then for
  every $V_0$ locally
  \begin{equation*}
    m_{V_0} \colon (V_0, 0) \to (V,0)
  \end{equation*}
 satisfies
  $K_{v_0}\subset K_{m_{V_0}(v_0)}$ for $v_0 \in V_0$. 
\end{corollary}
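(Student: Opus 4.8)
The plan is to combine Lemma~\ref{lem:isotropy_generator} with the uniqueness built into the Lyapunov--Schmidt construction. I begin with a point $v_0$ in the main stratum $(V_0)_\tau$, so that the hypothesis on $K$ gives $K_{v_0}=K\cap G_{v_0}=G_{v_0}=:H$. By Lemma~\ref{lem:isotropy_generator} the relative equilibrium $m_{V_0}(v_0)$ has a generator $\xi(v_0)\in(\LT')^L$, where $L$ is an adjoint isotropy subgroup with $H\subset L$; in particular $L$, hence $H$, fixes $\xi(v_0)$ under the adjoint action, so $H\subset G_{\xi(v_0)}$ and therefore $h-\J^{\xi(v_0)}$ is $H$-invariant. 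Now write $m_{V_0}(v_0)=v_0+\tilde v_1(v_0,\xi(v_0))$, using that $\tilde v_1$ restricts to $v_1$ on $V_0\times\LT$; here $\tilde v_1(v_0,\xi(v_0))$ is the unique small $w\in\tilde V_1$ solving $\nabla_{\tilde V_1}(h-\J^{\xi(v_0)})(v_0+w)=0$. Since $\tilde V_1$ is $K$-invariant --- as $\tilde V_0=\ker\der^2(h-\J^\eta)(0)$ with $\eta\in\LT^K$ is $K$-invariant --- hence $H$-invariant, since $v_0\in V^H$, and since $h-\J^{\xi(v_0)}$ is $H$-invariant, this defining equation is $H$-equivariant in $w$; uniqueness therefore forces $\tilde v_1(v_0,\xi(v_0))\in\tilde V_1^H$. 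Thus $m_{V_0}(v_0)\in V^H$, that is $H\subset G_{m_{V_0}(v_0)}$, and since $H\subset K$ this gives $K_{v_0}=H\subset K\cap G_{m_{V_0}(v_0)}=K_{m_{V_0}(v_0)}$.

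For a general $v_0\in V_0$, the plan is to pass to the smallest weight-space sum containing it: with $v_0=\sum_{\alpha\in S}(v_0)_\alpha$, put $W_0:=\bigoplus_{(v_0)_\alpha\neq 0}\C_\alpha\subset V_0$, so that $v_0$ lies in the main stratum of $W_0$ and, by the compatibility of the maps $m_{V_0}$ with restriction to $T$-invariant subspaces, $m_{V_0}(v_0)=m_{W_0}(v_0)$ near $0$. One then repeats the argument above with $W_0$ in place of $V_0$, using an appropriate sub-sum of the weight spaces of $\tilde V_0$ as the enveloping space demanded by Lemma~\ref{lem:isotropy_generator}.

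The step I expect to be the main obstacle is precisely this last reduction: one must check that this new pair still satisfies all hypotheses of Lemma~\ref{lem:isotropy_generator}, the delicate point being that the adjoint isotropy subgroup must contain \emph{every} isotropy subgroup occurring in the main stratum of $W_0$, whereas points of $(W_0)_\tau$ may have strictly larger $G$-isotropy than points of $(V_0)_\tau$. If the given $K$ no longer serves, the clean way around it --- and the reason the statement is phrased in terms of the fixed subgroup $K_{v_0}$ rather than $G_{v_0}$ --- is to prove directly that every $k\in K\cap G_{v_0}$ fixes the generator $\xi(v_0)$ under the adjoint action. For this one uses the $K$-equivariance of $\tilde v_1$, hence of $\tilde g(x,\zeta)=(h-\J^\zeta)(x+\tilde v_1(x,\zeta))$, which gives $\nabla_{\tilde V_0}\tilde g(v_0,\Ad_k\xi(v_0))=k\cdot\nabla_{\tilde V_0}\tilde g(v_0,\xi(v_0))=0$; the local uniqueness of the generator (invertibility of $\der\Gamma(0,\eta)$, from the proof of Theorem~\ref{thm:main-thesis}) then forces $\Ad_k\xi(v_0)=\xi(v_0)$, after which the equivariance/uniqueness argument of the first paragraph, run with $K\cap G_{v_0}$ in place of $H$, gives $k\cdot m_{V_0}(v_0)=m_{V_0}(v_0)$ and hence $K_{v_0}\subset K_{m_{V_0}(v_0)}$.
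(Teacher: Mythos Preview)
Your first two paragraphs match the paper's argument: reduce an arbitrary $v_0\in V_0$ to a point of the main stratum of the $T$-isotropy subspace $W_0$ it generates, apply Lemma~\ref{lem:isotropy_generator} there, and use the resulting $L$-equivariance of the Lyapunov--Schmidt map to land $m_{V_0}(v_0)$ in $V^H$. The paper is just as terse about the issue you flag in your third paragraph --- it simply asserts that ``Lemma~\ref{lem:isotropy_generator} applies'' after the replacement $V_0\rightsquigarrow W_0$, without naming the new enveloping space or adjoint-isotropy group. The intended fix is your route (a): one furnishes for each such $W_0$ a fresh pair $(\tilde W_0,K')$ via the construction carried out shortly afterwards in Lemma~\ref{lem:tildeV01} and Corollary~\ref{cor:existence_tilde_V_0}; this actually yields the stronger conclusion $G_{v_0}\subset G_{m_{V_0}(v_0)}$, which then implies $K_{v_0}\subset K_{m_{V_0}(v_0)}$ for the original $K$. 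There is no circularity, since those later results do not invoke the present corollary. One caveat: the new $\tilde W_0$ need not sit inside the old $\tilde V_0$ (the new $K'$ may be strictly larger than $K$), so your description of it as ``an appropriate sub-sum of the weight spaces of $\tilde V_0$'' is too restrictive.

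Your route (b), by contrast, has a genuine gap. The uniqueness you invoke --- invertibility of $\der\Gamma(0,\eta)$ --- is uniqueness of the generator \emph{within} $\LT'$. For $k\in K_{v_0}$ not lying in the normalizer of $T$, the element $\Ad_k\xi(v_0)$ is only known to lie in $\mathfrak k$, not in $\LT$, so from $\nabla_{\tilde V_0}\tilde g\bigl(v_0,\Ad_k\xi(v_0)\bigr)=0$ you cannot conclude $\Ad_k\xi(v_0)=\xi(v_0)$. Closing this would require showing either that $K_{v_0}$ normalizes the chosen complement to $\LT_{v_0}$ or that $\tilde v_1(v_0,\cdot)$ is constant along the $K_{v_0}$-adjoint orbit of $\xi(v_0)$ in $\mathfrak k$, neither of which is immediate.
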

\begin{proof}
 First of all, we note that every point $v_0$ of $V_0$ is contained in the
 main stratum of some isotropy subspace of $V_0$ with respect to the
 $T$-action. Replacing $V_0$ with this subspace if
 necessary, we can assume that $v_0$ is contained $(V_0)_\tau$. Since
 the number of isotropy subspaces of $V_0$ is finite, we can find
 $\eps>0$ such that Lemma~\ref{lem:isotropy_generator} applies for all
 $v_0 \in B_\eps(0) \subset V_0$. Then $\xi(v_0) \in \LT^L$ with $L$
 as in Lemma~\ref{lem:isotropy_generator}. Hence
 $v_1(\cdot, \xi(v_0))$ is $L$-equivariant and thus
 \begin{equation*}
   m(v_0) = v_0 + v_1(v_0, \xi(v_0)) \in V^H.
 \end{equation*}
\end{proof}

\begin{remark}\label{rem:characterization_L_part1}
  The proof of Lemma~\ref{lem:isotropy_generator} also implies that
  for $x \in (V_0)_\tau^H$ and $\LT^H = \LT^L$ the sets
  $\left((\J\circ \vartheta)^{-1}( (\LT')^L)\right) \cap \left(\{x\} \times
    \R_+^l\right)$ and $Q^H\cap \left(\{x\} \times \R_+^l\right)$
  coincide. Hence the preimage of $(\LT')^L$ under
  $\J \circ \vartheta(x, \cdot) \colon \R_+^l \to \LT'$ coincides with
  the intersection of $\R_+^l$ with a linear subspace of $\R^l$.
 
The map $\J \circ \vartheta (x, \cdot)$ is given by
\begin{equation*}
  t \mapsto \sum_{i=1}^l t_i^2\abs{x_{\alpha_i}}^2 \alpha.
\end{equation*}
Thus for $x \in (V_0)_\tau$, it is a composition of a linear
isomorphism $\R^l \to \LT'$ and the map
\begin{equation*}
  q:(t_1, \dots, t_l) \mapsto   (t_1^2, \dots, t_l^2).
\end{equation*}
Thus the linear subspace $A \subset \R^l$ given by the vectors $t$
with $(x, t) \in Q^H\cap \left(\{x\} \times \R^l\right)$ has the
property that $q(A)$ coincides with the intersection of $\R_+^l$ with
a linear subspace of $\R^l$ of the same dimension. Suppose that the
dimension of $A$ is $k$. After reordering of the coordinates
$t_1, \dots t_l$ if necessary, we can choose a basis $x_1, \dots, x_k$
of $A$ of $k$ vectors which form an $l\times k$-matrix $B$ in column
echelon form:
\begin{equation*}
  B =
  \begin{pmatrix}
    1& 0& \cdots &0\\
    0&1& \cdots &0\\
    0&0&\ddots&0\\
    0&0& \dots &1\\
    t_{k+1,1}& t_{k+1,2} &\cdots &t_{k+1,k}\\
    \vdots&\vdots&\vdots&\vdots\\
    t_{l,1}& t_{l,2} &\cdots &t_{l,k}
  \end{pmatrix}.
\end{equation*}
Then $q(A)$ is given by the intersection of $\R_+^l$ and the span of
$q(x_1), \dots, q(x_k)$.

Thus for any $1 \le i<j\le k$ and the vector $q(x_i +x_j)$ is equal to
$q(x_i) + q(x_j)$ and thus the products $t_{s,i}t_{s,j}$ vanish for
$s=k+1, \dots, l$. Hence in each row of $B$, at most one entry is
different from $0$. Therefore also the space that contains $q(A)$ as
an open subset and also the the space $(\LT')^L$ has this
property: $(\LT')^L$ has a basis $e_1, \dots e_j$ such that there is a
partition of $S$ into disjoint subsets $S_{e_1}, \dots, S_{e_k}$ and
$e_j$ is contained in the affine span of the elements of $S_{e_j}$.
\end{remark}

To show that $G_{v_0} \subset G_{m_{V_0}(v_0)}$ is locally true in general,
we therefore have to construct $\tilde V_0$ such that $G_{v_0} \subset
K$. To find an appropriate subgroup $K$ with Lie algebra $\mathfrak k$,
we introduce the notion of \emph{orthogonal intersection}, which
slightly differs from orthogonality .
\begin{definition}
  Two subspaces $U_1$ and $U_2$ of an inner product space
  \emph{intersect orthogonally} if their orthogonal projections to the
  orthogonal complement of $U_1 \cap U_2$ are orthogonal to each
  other.

  Two affine subspaces $A_1$ and $A_2$ \emph{intersect orthogonally}
  if their underlying subspaces do and in addition the intersection
  $A_1 \cap A_2$ is non-empty.
\end{definition}
\begin{lemma}\label{lem:orthogonal_intersection}
  Let $V_0$ be as in Lemma~\ref{lem:mu_in_LT}.
  Consider $x \in (V_0)_\tau$. Let $L$ denote the intersection of all
  coadjoint isotropy subgroups $M$ of elements of $\LT^*$ with $G_x
  \subset M$. Then $L$ is a coadjoint isotropy subgroup of an element
  of $\LT^*$, and $\left(\LT^*\right)^L$ intersects the affine span of
  $S$ orthogonally.

  If $S_i \subset S$ is a subset of $S$ that corresponds to weight
  spaces of a particular eigenspace of $\der^2 \h(0)$, then
  $\left(\LT^*\right)^L$ even intersects the affine span of
  $S_i$ orthogonally.
\end{lemma}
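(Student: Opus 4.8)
\emph{Proof plan.}
The plan is to fix once and for all an $\Ad$-invariant inner product on $\LG$ and use it to identify $\LG\cong\LG^*$ and $\LT\cong\LT^*$, so that roots, weights, Weyl walls and the momentum all live in one Euclidean space and ``intersects orthogonally'' has its literal meaning. Write $\mu:=\J_T(x)=\pi\sum_{\alpha\in S}\abs{x_\alpha}^2\alpha\in\LT^*$ for the $T$-momentum of $x$, which by Corollary~\ref{cor:H_subset_muT} satisfies $G_x\subset G_\mu$.

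\emph{Step 1: $L$ is a coadjoint isotropy subgroup, and a description of $(\LT^*)^L$.} There are finitely many Weyl walls, hence by Lemma~\ref{lem:adjoint_isotropy_connected} and Lemma~\ref{lem:BTD} only finitely many coadjoint isotropy subgroups of elements of $\LT^*$; the collection $\setthat{M}{G_x\subset M}$ is non-empty since $G_\mu$ lies in it. So $L$ is a finite intersection of coadjoint isotropy subgroups, and iterating Corollary~\ref{cor:intersection-groups} shows it is one as well. Putting $F:=\LT^*\cap(\LG^*)^{G_x}$, an $M=G_\nu$ contains $G_x$ exactly when $\nu$ is $G_x$-fixed, i.e.\ $\setthat{M}{G_x\subset M}=\setthat{G_\nu}{\nu\in F}$; choosing $\nu_0\in F$ off every Weyl wall that does not already contain the linear subspace $F$ (possible since $F$ is a subspace), one checks $G_{\nu_0}=L$, so $(\LT^*)^L=(\LT^*)^{\nu_0}$ is the intersection of $\LT^*$ with all Weyl walls containing $F$. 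In particular $\mu\in F\subset(\LT^*)^L$, so the barycentre $\tfrac1{\pi\sum\abs{x_\alpha}^2}\mu$ of $S$ lies in $(\LT^*)^L\cap\aff(S)$, which gives the non-emptiness half of the $\aff(S)$-claim; it thus remains to locate $F$ finely enough to get the remaining assertions.

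\emph{Step 2: the input from Lemma~\ref{lem:orthogonal_affine}.} For each $G$-irreducible eigenspace $U_i$ of $\der^2\h(0)$ and the orthogonal projection $P^\mu$ onto $(\LT^*)^\mu$, the weight spaces of $U_i$ group into $G_\mu$-subrepresentations $N$ --- the ``fibres'' of $P^\mu$ furnished by Lemma~\ref{lem:orthogonal_affine} --- and, since $U_i$ and these $N$ are $G_x$-invariant while $x$ is $G_x$-fixed, the component of $x$ in each $N$ is again $G_x$-fixed, so its momentum lies in $(\LG^*)^{G_x}$. From this one extracts that the $\abs{x_\alpha}^2$-weighted barycentres of $S$ and of each block $S_i^{[\phi]}:=\setthat{\alpha\in S_i}{P^\mu\alpha=\phi}$ lie in $(\LT^*)^L$; since the barycentre of $S_i$ is a convex combination of those of its blocks, $(\LT^*)^L\cap\aff(S_i)\neq\emptyset$, and each $S_i^{[\phi]}$ lies in a translate of $((\LT^*)^\mu)^\perp$, so its affine span is orthogonal to $(\LT^*)^\mu$.

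\emph{Step 3: orthogonality, and the main difficulty.} To obtain the orthogonality of $(\LT^*)^L$ with $\aff(S_i)$ (and with $\aff(S)$), the plan is to iterate Step~2: replace $\mu$ by a generic element of the part of $F$ produced so far, apply Lemma~\ref{lem:orthogonal_affine} anew --- the centre subspace $(\LT^*)^\nu$ grows, the fibre blocks refine, and by Corollary~\ref{cor:intersection-groups} this refinement is exactly the passage to a smaller coadjoint isotropy subgroup --- and repeat until the process stabilises at $L$. The outcome, in the spirit of Remark~\ref{rem:characterization_L_part1}, should be a description of $(\LT^*)^L\cap\vspan(S)$ as the span of vectors $e_1,\dots,e_k$ carried by the blocks of a partition $S=\bigcup_t S_{e_t}$ refining $S=\bigcup_i S_i$, with $e_t\in\aff(S_{e_t})$, while the remaining part of $(\LT^*)^L$ is orthogonal to $\vspan(S)$; from this block picture the orthogonal intersection of $(\LT^*)^L$ with each $\aff(S_i)$ and with $\aff(S)$ follows by a routine computation with orthogonal projections. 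I expect the main obstacle to be making the iteration rigorous: one has to show that at each stage the new fibre-blocks stay inside the current blocks, so the partition only ever refines the $S_i$ and never straddles two of them (this should use the weight structure of the irreducible $U_i$ from Theorem~\ref{thm:hall-weight-structure} together with the multiplicity-one and maximality hypotheses on the $S_i$ carried over from Theorem~\ref{thm:main-thesis}), and that the limit really is $(\LT^*)^L$ and not a proper subspace, which is where the finiteness and lattice structure of the coadjoint isotropy subgroups (Corollary~\ref{cor:intersection-groups}) and the genericity of $\nu_0$ enter.
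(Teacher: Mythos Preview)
Your Step~1 is correct and matches the paper. The divergence begins at Step~2: you apply Lemma~\ref{lem:orthogonal_affine} with $G_\mu$, obtaining a partition of each $S_i$ by the fibres of the projection onto $(\LT^*)^\mu$, and then in Step~3 propose to \emph{iterate}, successively refining until you reach $(\LT^*)^L$. The ``main obstacle'' you anticipate --- making this iteration rigorous and showing it terminates at $L$ --- is real, and you have not overcome it; but more to the point, the iteration is unnecessary.

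The paper's key simplification is to apply Lemma~\ref{lem:orthogonal_affine} directly with $L$ in place of $G_\mu$. You already established in Step~1 that $L$ is itself the coadjoint isotropy subgroup of some $\nu\in\LT^*$, so Lemma~\ref{lem:orthogonal_affine} applies verbatim with $(\LT^*)^L=(\LT^*)^\nu$: split each $U_i$ into the $L$-invariant pieces $U_i^j$ given by fibres of the projection onto $(\LT^*)^L$. Since $G_x\subset L$ and each $U_i^j$ is $L$-invariant, the component $x_i^j$ of $x$ in $U_i^j$ satisfies $G_x=L_x\subset L_{x_i^j}\subset G_{x_i^j}\subset G_{\mu_i^j}$ with $\mu_i^j:=\J(x_i^j)\in\LT^*$; minimality of $L$ then forces $L\subset G_{\mu_i^j}$, i.e.\ $\mu_i^j\in(\LT^*)^L$. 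Normalising, this is a point of $\aff(S_i^j)$, which by construction of $S_i^j$ sits in a translate of $((\LT^*)^L)^\perp$. Thus $(\LT^*)^L$ intersects each $\aff(S_i^j)$ orthogonally, and the claims for $\aff(S_i)=\aff\bigl(\bigcup_j S_i^j\bigr)$ and $\aff(S)$ follow immediately.

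Your iteration, if completed, would converge to precisely this partition --- since $(\LT^*)^{G_\mu}\subset(\LT^*)^L$, the $L$-partition refines your $G_\mu$-partition --- so your intuition about the limiting block structure (matching Remarks~\ref{rem:characterization_L_part1} and~\ref{rem:characterization_L_part2}) is sound. But there is no reason to approach $(\LT^*)^L$ from below when Step~1 already hands it to you.
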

\begin{proof}
  Again by Corollary~\ref{cor:intersection-groups}, $L$ is a coadjoint
  isotropy subgroup of an element of $\LT^*$.

  Since $G_x \subset L$, we have $G_x = L_x$.  We split $V^c$ into
  $L$-invariant components of the following form: First of all,
  consider the eigenspaces $U_i$ of $\der^2 \h(0)|_{V^c}$, which are
  irreducible complex $G$-representations by our genericity assumption. Now we
  consider the orthogonal projection to $\left(\LT^*\right)^L$ and
  split each $U_i$ further into subspaces $U_i^j$,
  $j =1, \dots, l_i$ given by the sums of weight spaces corresponding
  to the weights of $U_i$ that project to the same element of
  $\left(\LT^*\right)^L$. By Lemma~\ref{lem:orthogonal_affine} the
  spaces $U_i^j$ are $L$-invariant.

  Let $x^j_i \in U_i^j$ denote the corresponding components of $x \in
  (V_0)_\tau$. Then $L_x = \bigcap_{i,j} L_{x^j_i}$.
  
  Set $\mu^j_i := \J(x^j_i)$. Recall that $L_{x^j_i}\subset G_{x^j_i}
  \subset G_{\mu^j_i}$. From $G_x \subset G_{\mu^j_i}$ together with
  $\mu^j_i \in \LT^*$ and the definition
  of $L$, we obtain that
  $L \subset G_{\mu^j_i}$ for every $\mu^j_i$.

  Let $S^j_i \subset S$ denote the subset of weights corresponding to
  weight spaces contained in $U_i^j$. Let $x_\alpha$ denote the
  $\C_\alpha$-component of $x$ with respect to the isomorphism $V_0
  \simeq \bigoplus_{\alpha \in S} \C_\alpha$. Then
  \begin{equation*}
    \mu^j_i = \pi \sum_{\alpha \in S^j_i}\abs{x_\alpha}^2 \alpha.
  \end{equation*}
  If we assume w.~l.~o.~g. that
  $\sum_{\alpha \in S^j_i}\abs{x_\alpha}^2 = 1$ (otherwise we consider
  a multiple of $\mu^j_i$), we obtain that
  $\mu^j_i \in \left(\LT^*\right)^L$ is contained in the affine span
  of $S^j_i$. Then by definition of $S^j_i$, this implies that
  $\left(\LT^*\right)^L$ intersects the affine span of each $S^i_j$
  orthogonally. Thus this also holds for the affine spans of the sets
  $S_i = \bigcup_{j=1}^{l_i}S^j_i$ and $S = \bigcup_i S_i$.
\end{proof}

Given the affine span $A$ of the set $S$ of weights of $V_0$, we
consider the intersections of Weyl walls that intersect $A$
orthogonally. Note that the set of these intersection of Weyl walls is
closed under intersections: Let $U$ denote the underlying subspace of
$A$. Then $A = a + U$ for some $a \in U^\perp$. A subspace $I$
intersects $A$ orthogonally iff its orthogonal projection to $U$
coincides with $I \cap U$ and $a \in I$. If two subspaces $I$ and $J$
intersect $A$ orthogonally, then $a \in I \cap J$ and the orthogonal
projection of $I \cap J$ to $U$ coincides with $I \cap J \cap U$.

We identify the empty intersection of Weyl walls with $\LT^*$, which
obviously intersects $A$ orthogonally.
Thus we can find a minimal element $I_{\min}$ of all intersections of
Weyl walls that are orthogonal to $A$ with respect to the partial
ordering $I \le J$ iff $I \subset J$. Then we choose $K$ to be the
isotropy subgroup of the coadjoint action such that
$I_{\min} = (\LT^*)^K$.

\begin{lemma}\label{lem:tildeV01}
  Suppose that $V^c$ is an irreducible $G$-symplectic representation
  and that $\h:V\to \R$ is a smooth $G$-invariant function with
  $\der h(0) = 0$ that satisfies the genericity assumptions (GC) and
  (NR'). Consider
  $\ker \der^2(\h-\J^\xi)(0) = V_0 = \bigoplus_{\alpha \in S}
  \C_\alpha$ such that the affine span $A$ of $S$ does not contain the
  origin. Let $I_{\min}$ be the minimal element within all
  intersections of Weyl walls that intersect $A$ orthogonally.  Then
  for some isotropy group $K \subset G$ of the coadjoint action,
  $I_{\min}$ coincides with the isotropy subspace $(\LT^*)^{K}$. For
  every $x \in (V_0)_\tau$, the isotropy group $G_x$ is contained in
  $K$. Moreover, there is an $\eta \in \LT^K$ with
  \begin{equation*}
    V_0 \subset \ker \der^2(\h-\J^\eta)(0) =: \tilde V_0.
  \end{equation*}
\end{lemma}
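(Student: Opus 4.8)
The plan is to handle the three assertions in turn --- the first immediate from the construction, the second resting on Lemma~\ref{lem:orthogonal_intersection}, the third on an explicit choice of $\eta$. For the first assertion there is essentially nothing to do: $I_{\min}$ is by construction an intersection of Weyl walls in $\LT^*$, and by Lemma~\ref{lem:BTD} and Corollary~\ref{cor:intersection-groups} (in their coadjoint forms) every such intersection is the fixed-point subspace $(\LT^*)^K$ of a coadjoint isotropy subgroup $K$ of some element of $\LT^*$; fix such a $K$. From here on I identify $\LT$ with $\LT^*$ via the chosen invariant inner product (so that $\LT^K$ and $(\LT^*)^K$ are identified) and write $\langle\,\cdot\,,\,\cdot\,\rangle$ for this product. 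I also recall from the discussion following (GC), together with the irreducibility of $V^c$ (which makes $\der^2\h(0)|_{V^c}$ multiplication by a single scalar $c$), that $\CC_\alpha\subset\ker\der^2(\h-\J^\eta)(0)$ if and only if $2\pi\langle\alpha,\eta\rangle=c$.

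For the inclusion $G_x\subset K$, with $x\in(V_0)_\tau$, I would let $L$ be the intersection of all coadjoint isotropy subgroups of elements of $\LT^*$ that contain $G_x$, so that $G_x\subset L$ by construction, and then apply Lemma~\ref{lem:orthogonal_intersection}. Since $V^c$ is irreducible, $S$ consists of the weights of a single eigenspace, and the lemma gives that $L$ is again a coadjoint isotropy subgroup of an element of $\LT^*$ and that $(\LT^*)^L$ intersects $A:=\aff(S)$ orthogonally. Thus $(\LT^*)^L$ lies in the family of intersections of Weyl walls meeting $A$ orthogonally; since $I_{\min}$ is the least element of this family (it is their common intersection), $I_{\min}=(\LT^*)^K\subset(\LT^*)^L$. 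Since each of these subspaces is the common kernel of exactly the roots vanishing on it (Lemma~\ref{lem:BTD}), the inclusion of subspaces reverses to an inclusion of the corresponding sets of roots, whence Lemma~\ref{lem:BTD} together with the connectedness of (co)adjoint isotropy subgroups (Lemma~\ref{lem:adjoint_isotropy_connected}) gives $L\subset K$. Therefore $G_x\subset L\subset K$.

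For the last assertion I would write $\eta$ down explicitly. Decompose $A=a+U$, with $U$ its underlying subspace and $a\in U^\perp$ the foot of the perpendicular from the origin; since $0\notin A$ we have $a\neq0$, and since $\nu\mapsto 2\pi\langle\nu,\xi\rangle$ is linear and equals $c$ on $S$, hence on $A$, having $0\in A$ would force $c=0$, so in fact $c\neq0$. The key input here is the fact recalled just before the statement: a subspace meeting $A$ orthogonally necessarily contains $a$. Applied to $I_{\min}=(\LT^*)^K$ this gives $\RR a\subset(\LT^*)^K$, and I then set
\begin{equation*}
  \eta:=\frac{c}{2\pi\langle a,a\rangle}\,a\;\in\;\LT^K.
\end{equation*}
For $\alpha=a+u\in A$ with $u\in U$, so that $\langle u,a\rangle=0$, one computes $2\pi\langle\alpha,\eta\rangle=\frac{c}{\langle a,a\rangle}\langle a+u,a\rangle=c$; by the criterion noted in the first paragraph this means $\CC_\alpha\subset\ker\der^2(\h-\J^\eta)(0)$ for every $\alpha\in S$, i.e.\ $V_0\subset\ker\der^2(\h-\J^\eta)(0)=:\tilde V_0$, as claimed.

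I expect the substantive work to be already done, inside Lemma~\ref{lem:orthogonal_intersection}; the rest is bookkeeping. The two points I would handle with care are the passage, in the second step, between inclusions of isotropy subspaces of $\LT^*$ and inclusions of the associated connected isotropy subgroups, and, in the third step, the elementary but decisive observation that the foot of the perpendicular from the origin to $A$ lies in every subspace meeting $A$ orthogonally --- it is precisely this that places $\eta$ inside $\LT^K$ rather than merely in $\LT$.
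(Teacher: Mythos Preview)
Your proof is correct and follows the same route as the paper: the first assertion is immediate from Lemma~\ref{lem:BTD}, the second is Lemma~\ref{lem:orthogonal_intersection} plus the reversal $(\LT^*)^K\subset(\LT^*)^L\Rightarrow L\subset K$, and the third is an explicit choice of $\eta$. Two small remarks. First, your inference ``$0\in A$ would force $c=0$, so in fact $c\neq0$'' is the wrong direction (from $0\notin A$ and $0\in A\Rightarrow c=0$ nothing follows about $c$); in fact $c\neq0$ comes from (GC), but you never use it --- $a\neq0$ already makes your $\eta$ well-defined. Second, the paper picks a different $\eta$, namely the orthogonal projection of $\xi$ onto $I_{\min}$ rather than onto the line $\RR a\subset I_{\min}$; this lets it identify $\tilde V_0$ exactly as the sum of the weight spaces for the weights in $A+I_{\min}^\perp$, a description used later in the article but not required for the lemma as stated.
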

\begin{proof}
  Since $I_{\min}$ is an intersection of Weyl walls, by
  Lemma~\ref{lem:BTD} there is an isotropy group $K \subset G$ of the
  coadjoint action such that this intersection coincides with
  $(\LT^*)^K$. Let $I_{\min}^\perp$ be the orthogonal subspace of
  $I_{\min}$ within $\LT^*$. Extend the set $S$ to the set $\tilde S$
  given by the weights of $V$ contained in
  \begin{equation*}
    A + I_{\min}^\perp= (A \cap I_{\min}) + I_{\min}^\perp.
  \end{equation*}
  Let $\tilde V_0$ denote the sum $\bigoplus_{\alpha \in \tilde S}
  \C_\alpha$. Obviously, $V_0 \subset \tilde V_0$.
  
  Consider $c \in \R$ with $\der^2\h(0) =2 \pi c\id$ on $V^c$.

  Then  $\xi$ satisfies ($\langle \alpha, \xi \rangle = c $ iff
  $\alpha \in A$). Similarly,
  $\tilde V_0 = \ker \der^2(\h-\J^{\eta}(0))$ is equivalent to
  ($\langle \alpha, \eta\rangle = c $ iff $\alpha \in  A +
  I_{\min}^\perp$).

  Fixing a $G$-invariant inner product on $\LG$, we identify $\LG$
  with $\LG^*$ and $\LT$ with $\LT^*$.  Let $\eta$ denote the
  orthogonal projection of $\xi$ to $I_{\min}$.  Since
  $\eta - \xi \in I_{\min}^\perp$, the inner products of $\eta$ and
  $\xi$ with elements of $I_{\min}$ coincide. Thus
  $\langle \alpha, \eta\rangle = c$ if $\alpha \in A \cap I_{\min}$
  and $\langle \alpha, \eta\rangle \ne c$ if
  $\alpha \in I_{\min}\setminus (A \cap I_{\min})$. Moreover
  $\eta \in I_{\min}$ implies that for any weight $\alpha$ the value
  $\langle \alpha, \eta\rangle $ coincides with the orthogonal
  projection of $\alpha$ to $I_{\min}$ evaluated at $\eta$. Therefore
  $\langle \alpha, \eta\rangle = c$ iff
  $\alpha \in (A\cap I_{\min}) + I_{\min}^\perp = A+
  I_{\min}^\perp$. By the maximality of $\tilde S$ within
  $A + I_{\min}^\perp$, we obtain
  $\tilde V_0 = \ker \der^2(\h-\J^{\eta}(0))$.
  
  Next, we have to show that all isotropy subgroups of elements
  $x = \sum_{\alpha \in S} x_\alpha$ with
  $0 \ne x_\alpha \in \C_\alpha$ of $V_0$ are contained in $K$.  By
  Lemma~\ref{lem:orthogonal_intersection}, there is an intersection of
  Weyl walls $\left(\LT^*\right)^L$ that intersects $S$ orthogonally
  such that $G_x \subset L$. By definition of $K$, we have
  \begin{equation*}
    \left(\LT^*\right)^K = I_{\min} \subset \left(\LT^*\right)^L
  \end{equation*}
  and hence $G_x \subset L \subset K$.
\end{proof}

Now, we extend the result to the case that $V$ is not necessarily
irreducible.
\begin{corollary}\label{cor:existence_tilde_V_0}
  Suppose that $\h:V\to \R$ is a smooth
  $G$-invariant function with $\der h(0) =
  0$ that satisfies the genericity assumptions (GC) and (NR') and
  consider $\ker \der^2(\h-\J^\xi)(0) = V_0 = \bigoplus_{\alpha \in S}
  \C_\alpha \subset V^c\subset V$ for some $\xi \in
  \LT$. Then there is an isotropy group $K \subset
  G$ of an element of
  $\LT^*$ with respect to the coadjoint action such that for any $x
  \in (V_0)_\tau$ the isotropy group $G_x$ is contained in
  $K$ and such that $V_0$ is contained in a
  $K$-invariant subspace $\tilde V_0$ of the form $\tilde V_0 = \ker
  \der^2 (\h-\J^\eta) (0)$ for some $\eta \in \LT^{K}$.
\end{corollary}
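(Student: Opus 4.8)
The plan is to follow the proof of Lemma~\ref{lem:tildeV01}, replacing the single affine span used there by the family of affine spans coming from the several eigenspaces of $\der^2\h(0)$. By condition (GC) we write $V^c=\bigoplus_i U_i$ as the sum of its irreducible $G$-symplectic eigenspaces of $\der^2\h(0)|_{V^c}$, say $\der^2\h(0)=2\pi c_i\,\id$ on $U_i$; since $\der X_h(0)$ is nondegenerate, each $c_i\ne 0$. Put $S_i:=\setthat{\alpha\in S}{\C_\alpha\subset U_i}$ and $A_i:=\aff(S_i)$, with underlying subspace $W_i$, so that $V_0\cap U_i=\bigoplus_{\alpha\in S_i}\C_\alpha$, $\alpha(\xi)=c_i$ for $\alpha\in S_i$, and $0\notin A_i$ (since $0\in A_i$ would force $c_i=0$).

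First I would construct $K$. Exactly as in the paragraph preceding Lemma~\ref{lem:tildeV01}, for a fixed $A_i$ the intersections of Weyl walls in $\LT^*$ that intersect $A_i$ orthogonally form a family closed under intersection; hence the intersections of Weyl walls that intersect \emph{every} $A_i$ orthogonally are closed under intersection too, and, since $\LT^*$ itself lies in this family, it has a least element $I_{\min}$ for inclusion. By Lemma~\ref{lem:BTD} there is a coadjoint isotropy subgroup $K\subset G$ of an element of $\LT^*$ with $(\LT^*)^K=I_{\min}$. To see $G_x\subset K$ for every $x\in(V_0)_\tau$, apply Lemma~\ref{lem:orthogonal_intersection} to $x$: it yields a coadjoint isotropy subgroup $L$ with $G_x\subset L$ such that $(\LT^*)^L$ intersects the affine span of \emph{each} $S_i$ orthogonally. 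Thus $(\LT^*)^L$ belongs to the family above, so $I_{\min}\subset(\LT^*)^L$; since for coadjoint isotropy subgroups of elements of $\LT^*$ a larger fixed subspace corresponds to a smaller group (by Lemma~\ref{lem:adjoint_isotropy_connected} and Lemma~\ref{lem:BTD}), this gives $L\subset K$, hence $G_x\subset K$.

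Next I would produce $\eta$. Identify $\LG$ with $\LG^*$ and $\LT$ with $\LT^*$ via a $G$-invariant inner product, and let $\eta$ be the orthogonal projection of $\xi$ onto $I_{\min}$, so $\eta\in I_{\min}=\LT^K$. Fix $i$ and pick $b_i\in A_i\cap I_{\min}$ (nonempty, as $I_{\min}$ intersects $A_i$ orthogonally). From $\alpha(\xi)=c_i$ for $\alpha\in S_i$ we get $\xi\perp W_i$ and $\langle b_i,\xi\rangle=c_i$, so $\langle b_i,\eta\rangle=\langle b_i,\xi\rangle=c_i$ because $\xi-\eta\perp I_{\min}\ni b_i$; and using the orthogonal decomposition of $\LT^*$ induced by the orthogonal intersection of $I_{\min}$ with $W_i$ one checks that the $I_{\min}^{\perp}$-component $\xi-\eta$ of $\xi$ still lies in $W_i^{\perp}$, so that $\eta\perp W_i$ too. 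Hence $\alpha(\eta)=\langle\alpha,\eta\rangle=c_i$ for every $\alpha\in S_i$, so $\der^2\J^{\eta}(0)$ and $\der^2\h(0)$ act on each $\C_\alpha\subset V_0\cap U_i$ by the same scalar and $\C_\alpha\subset\ker\der^2(\h-\J^{\eta})(0)=:\tilde V_0$; running over all $i$ gives $V_0\subset\tilde V_0$. Finally $\eta\in\LT^K$ gives $K\subset G_\eta$, so $\J^{\eta}$ and therefore $\tilde V_0=\ker\der^2(\h-\J^{\eta})(0)$ are $K$-invariant.

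I expect the delicate point to be the construction of $\eta$, specifically the choice of $I_{\min}$ as the \emph{least} intersection of Weyl walls that is orthogonal to \emph{every} $A_i$. Since the eigenvalues $c_i$ differ, recovering $\C_\alpha\subset\ker\der^2(\h-\J^{\eta})(0)$ for $\alpha\in S_i$ forces $\eta\perp W_i$ for each $i$ separately; orthogonality of $I_{\min}$ to the single affine hull $\aff(S)$ would not be enough, and a careless choice of $K$ — for instance $\bigcap_i K_i$ where $K_i$ is obtained by applying Lemma~\ref{lem:tildeV01} to the individual $U_i$ — need not carry an admissible $\eta$. It is precisely the ``moreover'' clause of Lemma~\ref{lem:orthogonal_intersection}, giving orthogonality of the relevant fixed subspaces to each $\aff(S_i)$, that makes $I_{\min}$ simultaneously large enough to contain all the groups $G_x$ and small enough to admit $\eta$.
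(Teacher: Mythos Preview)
Your proposal is correct and follows essentially the same route as the paper's proof: you choose $I_{\min}$ as the minimal intersection of Weyl walls that intersects every $A_i$ orthogonally, apply Lemma~\ref{lem:orthogonal_intersection} to get $G_x\subset K$, and take $\eta$ to be the orthogonal projection of $\xi$ onto $I_{\min}$. The paper's proof dispatches the verification that $V_0^i\subset\ker\der^2(\h-\J^\eta)(0)$ with the phrase ``as above'' (pointing back to Lemma~\ref{lem:tildeV01}), whereas you spell out the orthogonality argument and correctly identify that the ``moreover'' clause of Lemma~\ref{lem:orthogonal_intersection} is what makes the simultaneous construction work across all eigenspaces.
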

\begin{proof}
  We split $V^c$ into eigenspaces $U_i$ of $\der^2 \h(0)$,
  which are complex irreducible $G$-representations by our genericity
  assumption. In addition, we split $V_0$ into subspaces $V_0^i$
  contained in these eigenspaces. Let $S_i$ denote the set of weights
  of $V_0^i$, and let $A_i$ be the affine span of $S_i$.

  Then the set of Weyl wall intersections that intersect each $A_i$
  orthogonally contains a minimal element, which coincides with
  $(\LT^*)^K$ for some isotropy subgroup $K\subset G$ of the coadjoint
  action. Lemma~\ref{lem:orthogonal_intersection} implies that
  $G_x \subset K$ for every $x \in (V_0)_\tau$.

  Again, let $\eta$ denote the orthogonal projection of $\xi$ to
  $\LT^K$.  As above, we obtain that $V_0^i\subset \ker \der^2(\h -
  \J^\eta)(0) =: \tilde V_0$ for every $i$ and hence $V_0 \subset
  \tilde V_0$.
\end{proof}
\begin{remark}\label{rem:characterization_L_part2}
  Recall that by Remark~\ref{rem:characterization_L_part1}, if
  $(\LT')^L=(\LT')^H$ for some isotropy subgroup $H \subset G$ of
  $(V_0)_\tau$ and a coadjoint isotropy subgroup $L \subset G$, then
  $(\LT')^L$ has a basis $e_{S_1}, \dots, e_{S_k}$ corresponding to a
  partition $S = \dot\bigcup_{j=1}^k S_j$ such that each $e_{S_j}$ is
  contained in the affine span of $S_j$.

  Now, we have the additional condition of orthogonal intersection of
  $\LT^L$ with the affine span of $S$, and we have to investigate how
  the two conditions fit together. Suppose that $x\in (V_0)_\tau$
  satisfies $\J(x) \in \LT^L$. As argued in the proof of
  Lemma~\ref{lem:orthogonal_intersection}, we can split $S$ into sets
  $\tilde S_i$ of weights whose orthogonal projection to $\LT^L$
  coincides. Then the spaces
  $\bigoplus_{\alpha \in \tilde S_i} \C_\alpha$ are $L$-invariant by
  Lemma~\ref{lem:orthogonal_affine}. If $x_{\alpha}$ denotes the
  $\C_\alpha$-component of $x$ for $\alpha \in S$ and
  $x_i := \sum_{\alpha \in \tilde S_i} x_\alpha$, then
  $G_x = \bigcap_i G_{x_i}$. Thus if $L$ is the smallest coadjoint
  isotropy subgroup that contains $H:= G_x$ (equivalently
  $\LT^L = \LT^H$), then we necessarily have
  $\mu_i:=\J(x_i) \in \LT^L$ for every $i$. Since each set
  $\tilde S_i$ contains a positive multiple of $\mu_i$ in its affine
  span and since the $\mu_i$ form a linearly independent set, the
  partition $S=\dot\bigcup_j S_j$ coincides with the partition
  $S = \dot \bigcup_i \tilde S_i$. Hence $\LT^L$ is orthogonal to the
  affine spans of the sets $S_i$.  Moreover, since the intersection
  point of $\LT^L$ and the affine span of $S_j=\tilde S_j$ is a
  positive multiple of $\mu_j$, it is contained in the inside of the
  convex hull of the set $S_j$ if $S_j$ contains more than one
  element.

  Altogether, to compute the isotropy subgroups of the relative
  equilibria that we obtain from Theorem~\ref{thm:main-thesis}, it suffices to
  investigate the case that a coadjoint isotropy subspace intersects
  the convex hull of a linearly independent set $S$ of weights in a single
  inner point $\nu$ and to compute the isotropy subgroups of the
  points $x\in \bigoplus_{\alpha \in S}\C_\alpha$ with $\J(x)= \nu$. 
\end{remark}

\begin{theorem}
  Let $V$ be a $G$-symplectic representation and $\h: V \to \R$ be a
  smooth $G$-invariant Hamiltonian function with $\der h (0) = 0$ such
  that (GC) and (NR') hold for $h$. If $\xi \in \LT$ and
  $V_0 := \der^2 (\h- \J^\xi)(0)$ has linearly independent weights,
  then there is a local map $m_{V_0}$ from a neighborhood $U$ of $0$
  in $V_0$ to $V$ such that $M_0 :=m_{V_0}(U)$ is a manifold of
  $T$-relative equilibria with $0 \in M_0$, $T_0 M_0 = V_0$ and
  $G_{m_{V_0}(x)} = G_x$ for every $x \in V_0$.
\end{theorem}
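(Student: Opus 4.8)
I would begin by observing that the existence of the local map $m_{V_0}$, together with the facts that $M_0 := m_{V_0}(U)$ is a manifold of $T$-relative equilibria, $0 \in M_0$ and $T_0 M_0 = V_0$, is precisely \cite[Lemma~6.55]{thesis} (see also the discussion following Theorem~\ref{thm:main-thesis}); so the only thing that still needs proof is the identity $G_{m_{V_0}(x)} = G_x$, which I would establish by two inclusions. First I would reduce to the main stratum: since the maps $m_{W_0}$ for $T$-invariant subspaces $W_0 \subset V_0$ locally coincide with the restrictions of $m_{V_0}$, and every $x \in V_0$ lies in the main stratum of some such $W_0$, I may assume $x \in (V_0)_\tau$; this is the hypothesis under which the momentum and generator results of this section were proved.

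\emph{The inclusion $G_x \subset G_{m_{V_0}(x)}$.} I would first treat the case where the affine set $X = \setthat{\zeta \in \LT}{V_0 \subset \ker \der^2(\h - \J^\zeta)(0)}$ is not contained in a Weyl wall. Then Corollary~\ref{cor:existence_tilde_V_0} produces a coadjoint isotropy subgroup $K$ with $G_x \subset K$ together with a $K$-invariant subspace $\tilde V_0 = \ker \der^2(\h - \J^\eta)(0) \supset V_0$, $\eta \in \LT^K$, so that the hypotheses of Lemma~\ref{lem:isotropy_generator} are met, and Corollary~\ref{cor:mV0equiv} then gives $m_{V_0}(x) \in V^{G_x}$, i.e.\ $G_x \subset G_{m_{V_0}(x)}$. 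If $X$ does lie in a Weyl wall, then $S$ contains a reflection pair $\alpha \neq w\alpha$, and by \cite[Theorem~6.74]{thesis} the point $m_{V_0}(x)$ is carried by some $g \in G$ into the image of $m_{W_0}$ for the proper $T$-invariant subspace $W_0 = \bigoplus_{\beta \in S \setminus \{\alpha\}} \CC_\beta$; an induction on $\dim V_0$ then reduces to the generic case, provided one checks that $g$ can be chosen so that it carries the corresponding preimage in $W_0$ to $x$, so that the conclusion survives conjugation by $g$.

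\emph{The inclusion $G_{m_{V_0}(x)} \subset G_x$.} Here I would write $\hat H := G_{m_{V_0}(x)}$ and $V^{\hat H}$ for its fixed subspace in $V$. By the inclusion just established, $m_{V_0}$ maps $V_0 \cap V^{\hat H}$ into $M_0 \cap V^{\hat H}$, and the plan is to show that near $m_{V_0}(x)$ this map is onto $M_0 \cap V^{\hat H}$. The points of $M_0 \cap V^{\hat H}$ are $T$-relative equilibria fixed by $\hat H$; by Corollary~\ref{cor:isotropy_momentum_T}, Corollary~\ref{cor:H_subset_muT} and Lemma~\ref{lem:isotropy_generator} their momenta lie in $\LT^L$ and their generators in $(\LT')^L$ for the appropriate adjoint isotropy subgroup $L$. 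Feeding this into a dimension count of the type carried out in Step~3 of the proof of Lemma~\ref{lem:isotropy_generator}, via the decomposition $\vartheta \circ \Gamma$, should show that $M_0 \cap V^{\hat H}$ has dimension $\dim(V_0 \cap V^{\hat H})$ and is, locally, exactly the image of $V_0 \cap V^{\hat H}$ under $m_{V_0}$. Since $m_{V_0}(x) \in M_0 \cap V^{\hat H}$ and $m_{V_0}$ is a local homeomorphism onto $M_0$, this forces $x \in V_0 \cap V^{\hat H}$, i.e.\ $\hat H \subset G_x$.

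\emph{Where the difficulty sits.} The genuinely delicate step is this second inclusion: $V_0$ is only $T$-invariant, so an element of $G_{m_{V_0}(x)}$ has no reason a priori to preserve $V_0$, and the whole content of the theorem is that it must. The dimension argument forcing this has to be run using only $T$-invariant reductions while still pinning down the full $G$-isotropy, which is exactly what the momentum corollaries and the $\vartheta \circ \Gamma$ decomposition are designed to achieve; the rest is essentially assembly. A secondary, more bookkeeping-heavy nuisance is the Weyl-wall case of the first inclusion, where one must track isotropy subgroups — not merely isotropy types — through the reduction of \cite[Theorem~6.74]{thesis}.
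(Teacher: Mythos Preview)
Your first inclusion $G_x \subset G_{m_{V_0}(x)}$ is essentially the paper's argument (the paper just cites Corollary~\ref{cor:existence_tilde_V_0} and Corollary~\ref{cor:mV0equiv}); the Weyl-wall detour you add is unnecessary here, since the standing hypothesis of the section is that $X$ is not contained in a Weyl wall.

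The second inclusion is where you diverge, and the paper's route is both different and much more elementary. You try to run another dimension count through $\vartheta\circ\Gamma$ to show that $M_0\cap V^{\hat H}$ coincides locally with $m_{V_0}(V_0\cap V^{\hat H})$. The difficulty is that the inputs you invoke (Corollary~\ref{cor:H_subset_muT}, Lemma~\ref{lem:isotropy_generator}) constrain the momentum and generator of a point in terms of the isotropy of its \emph{preimage} $y\in V_0$, not in terms of $G_{m_{V_0}(y)}$; using them to bound $\dim(M_0\cap V^{\hat H})$ presupposes exactly the inclusion you are trying to prove. Your own phrasing (``should show'') signals this gap.

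The paper bypasses all of this with a first-order perturbation argument that uses none of the momentum or generator machinery. Write $m_{V_0}(x)=x+\phi(x)$ with $\phi(x)=v_1(x,\xi(x))$; then $\der\phi(0)=0$. For any isotropy subgroup $H$ of $V$, split $V_0=V_0^H\oplus C_0$ orthogonally and set $d:=\dist(S_{C_0},V^H)>0$, so that $\dist(c_0,V^H)\ge d\|c_0\|$ for $c_0\in C_0$. Now if $x=v_0^H+c_0$ and $m_{V_0}(x)\in V^H$, subtract off $m_{V_0}(v_0^H)\in V^H$ (known from the first inclusion) to get $c_0+\der\phi(v_0^H)c_0+R(v_0^H,c_0)\in V^H$. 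Since $\der\phi(0)=0$, for small $x$ the correction has norm at most $\tfrac{2}{3}d\|c_0\|$, which forces $c_0=0$, i.e.\ $x\in V_0^H$. Applying this with $H=G_{m_{V_0}(x)}$ gives the inclusion. This argument is uniform over the finitely many isotropy subgroups of $V$, needs only that $m_{V_0}$ is tangent to the identity at $0$, and avoids the circularity above entirely.
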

\begin{proof}
  We only have to prove that $G_{m_{V_0}(x)} = G_x$ for every
  $x \in V_0$, if $U$ is small enough. $G_x \subset G_{m_{V_0}(x)}$
  follows from Corollary~\ref{cor:existence_tilde_V_0} and
  Corollary~\ref{cor:mV0equiv}.

  Thus it remains to prove that we can find $U$ with $G_{m_{V_0}(x)}
  \subset G_x$ for every $x \in U$.
  
  We recall that $m_{V_0}(x) = x + v_1(x, \xi(x))$ and that
  $v_1$ satisfies $(\der_x v_1)(0, \xi(0))= 0$ and
  $(\der_\xi v_1)(0, \xi(0)) =0$. Hence the derivative of the map
  $\phi\colon x \mapsto v_1(x, \xi(x))$ vanishes as well at $x =0$.

  Let $H \subset G$ be an isotropy subgroup of $V$. Split $V_0$ into
  the fixed point space $V_0^H$ and its orthogonal complement $C_0$
  within $V_0$. (The case $C_0 = \{0\}$ is trivial. Thus we assume $C_0
  \ne \{0\}$ in the following.) If $S_{C_0}$ denotes the
  unit sphere of $C_0$, then
  \begin{equation*}
    d := \dist(S_{C_0}, V^H) > 0
  \end{equation*}
  and we obtain for every $c_0 \in C_0$ that $\dist(c_0, V^H) \ge d
  \norm{c_0}$.

  Now we suppose that for some $x = v_0^H + c_0 \in V_0$ with
  $v_0^H \in V_0^H$ and $c_0 \in C_0$ we have $m_{V_0}(x) \in V^H$. It
  is
  \begin{equation*}
    m_{V_0}(v_0^H+c_0) = v_0 + c_0 + \phi(v_0^H + c_0) = v_0^H + c_0 +
    \phi(v_0^H) + \der_{V_0}\phi(v_0^H)c_0 + R(v_0^H, c_0)
  \end{equation*}
  with $\lim_{c_0 \to 0} \frac {R(v_0^H, c_0)} {\norm{c_0}} = 0$.

  Since $v_0^H + \phi(v_0^H)= m_{V_0}(v_0^H) \in V^H$, we deduce
  \begin{equation*}
    c_0 + \der_{V_0}\phi(v_0^H)c_0 + R(v_0^H, c_0) \in V^H.
  \end{equation*}
  Since $\der_{V_0}\phi(0) = 0$ and $\phi$ is continuously
  differentiable, there is $\delta_1$ such that
  \begin{equation*}
    \norm{\der_{V_0}\phi(v_0^H)} \le \frac d 3
  \end{equation*}
  if $\norm{v_0^H} < \delta_1$.
  
  By Hadamard's lemma, there is a smooth function $f$ with values in
  $L(C_0, C_0)$ such that
  $R(v_0^H, c_0)= \langle c_0, f(v_0^H, c_0) c_0 \rangle$. Since $f$
  is bounded on $B_{\delta_1}(0)$, we can find $\delta_2 \le \delta_1$
  such that $R(v_0^H, c_0) \le \frac d 3 \norm{c_0}$ if
  $\norm{v_0^H} < \delta_1$ and $\norm{c_0} < \delta_2$. Thus if
  
   $\norm{x} < \min\left(\delta_1, \delta_2\right)$, then
   \begin{equation*}
     \norm{\der_{V_0}\phi(v_0)c_0 + R(v_0^H, c_0)}\le \frac 2 3 d \norm{c_0}.
   \end{equation*}
   Therefore $\der_{V_0}\phi(v_0)c_0 + R(v_0^H, c_0) \in V^H$ implies
   that $c_0 = 0$ and hence $x \in V_0^H$.
\end{proof}

\section{Dimensions of the isotropy strata}\label{sec:dimensions}

As shown by Patrick and Roberts \cite{Patrick2000TheTR}, for free
Hamiltonian actions of compact connected groups $G$ the relative
equilibria generically form a Whitney stratified set, whose strata are
determined by the conjugacy classes of the groups $K=G_\xi \cap G_\mu$
consisting of the common elements of the isotropy subgroups of the
generator $\xi$ and the momentum $\mu$. For a class $(K)$ the stratum
has the dimension
\begin{equation*}
  \dim G + 2 \dim Z(K) - \dim K,
\end{equation*}
where $Z(K)$ denotes the center of $K$.  This can be generalized to
possibly non-free actions, see \cite[Section 6.3]{thesis}. Generically
Patrick and Robert's results apply to the subspaces of elements with
the same isotropy group $H$ and the free action of
$\left(\quot{N(H)}{H}\right)^\circ$ on that subspace. The strata are
determined by the isotropy types $(H)$ of the relative equilibria and
the conjugacy classes of
\begin{equation*}
  K:=\left(\quot{N(H)}{H}\right)^\circ_\xi
  \cap \left(\quot{N(H)}{H}\right)^\circ_\mu,
\end{equation*}
where $\xi$ denotes the generator within the Lie algebra
$\quot {\mathfrak n(\mathfrak h)}{\mathfrak h}$ and $\mu$ the momentum
in $\left(\quot {\mathfrak n(\mathfrak h)}{\mathfrak h}\right)^*$. If
$H = gH'g^{-1}$ we identify the groups
$\left(\quot{N(H)}{H}\right)^\circ$ and $\left(\quot{N(H')}{H'}\right)^\circ$
via $n \mapsto gng^{-1}$ and accordingly we identify conjugacy classes
of their subgroups. Then the stratum corresponding to the types $(H)$
and $(K)$ for $K \subset \quot{N(H)}{H}$ has the dimension
\begin{equation*}
  \dim G - \dim H + 2 \dim Z(K) - \dim K.
\end{equation*}
As shown above, the isotropy groups of the local manifold $M_0$ of
$T$-relative equilibria tangent to $V_0$ coincide with those of $V_0$
if $V_0=\bigoplus_{\alpha \in S} \C_\alpha$ with $S=\bigcup S_i$ as in
Theorem~\ref{thm:main-thesis} such that no $S_i$ contains a
Weyl reflection pair $\alpha \ne w \alpha$.
To calculate the generic dimensions of the strata that intersect
$M_0$, we need to investigate the group
$\left(\quot{N(H)}{H}\right)^\circ$ and the subgroup $K$.

The group $\left(\quot{N(H)}{H}\right)^\circ$ is isomorphic to
$\quot{\left(C_G(H)\right)^\circ}{\left(Z(H)\right)^\circ}$, where
$C_G(H)$ denotes the centralizer of $H$ within $G$, see for instance
\cite[Corollary~3.10.1]{field2007dynamics}. Equivalently, this
holds for the Lie algebras:
\begin{equation*}
  \quot{\mathfrak n(H)}{\mathfrak h}
  \cong \quot{\mathfrak c_G(H)}{\mathfrak z(H)}.
\end{equation*}
\begin{remark}
  In contrast to Field \cite{field2007dynamics}, we denote the Lie
  algebra of $C_G(H)$ by $\mathfrak c_G(H)$ and not by
  $\mathfrak c_G(\mathfrak h)$ since it depends on $H$ and not only on
  $\mathfrak h$: Consider for example a finite subgroup which is not
  contained in the center. Consequently, the Lie algebra
  $\mathfrak n(H)$ depends on $H$. Similarly, $\mathfrak z(H)$ depends
  on the group $H$ in general: Suppose for example that $H = N(T)$ and
  that the Weyl group $\quot {N(T)}{T}$ is non-trivial.

  Note also that $\left(\quot {N(H)}{H}\right) ^\circ$ and
  $\left(\quot {N(H^\circ)}{H^\circ}\right) ^\circ$ do not coincide in general.
\end{remark}
Since $\LT_x$ coincides for all $x \in (V_0)_\tau$ and contains a point
$\xi \in \LT$ which is not contained in any Weyl wall, we obtain
\begin{equation*}
  \mathfrak c_G(H) \subset \mathfrak c_G(\xi) = \LG_\xi = \LT.
\end{equation*}
Thus the Lie algebra $\quot{\mathfrak n(H)}{\mathfrak h}$ is
Abelian and hence
\begin{equation*}
  K = Z(K) = \left(\quot{N(H)}{H}\right)^\circ.
\end{equation*}
Consequently, the dimension of the stratum only depends on the
isotropy type $(H)$.

We need to determine the dimension of the group
$\left(\quot{N(H)}{H}\right)^\circ$. Again let $L$ denote the
minimal isotropy group of the adjoint action within $\LT$ that contains
$H$.
From
$\mathfrak c_G(H) \subset \LT$, we obtain
\begin{equation*}
  \mathfrak c_G(H) = \mathfrak c_G(H) \cap \LT
  = \LT^{H} = \LT^L. 
\end{equation*}
Moreover,
\begin{equation*}
  \mathfrak z(H)
  = \mathfrak h \cap \mathfrak c_G(H)
  =  \mathfrak h \cap \LT^L= \LT_x \cap \LT^L
\end{equation*}
for any $x \in \left(V_0\right)_\tau$. As above, let $\LT'$ denote the
orthogonal complement of $ \LT_x$ within $\LT$. Note that we can
identify $\LT'$ with the span of the set of weights $S$ of
$V_0$. Recall that $\left(\LT^*\right)^L$ intersects the affine span
of $S$ orthogonally, see
Lemma~\ref{lem:orthogonal_intersection}. Hence $\LT^L$ intersects
$\LT'$ orthogonally, and thus
$\LT^L=\left(\LT_x \cap \LT^L\right) \oplus \left(\LT' \cap
  \LT^L\right)$.  Therefore
\begin{equation*}
  \quot{\mathfrak n(H)}{\mathfrak h} \cong \left(\LT'\right)^L.
\end{equation*}
Altogether, the dimension of the stratum that contains $x \in V_0^H$
is given by
\begin{equation*}
  \dim G -\dim H + \dim  \left(\LT'\right)^L .
\end{equation*}

\begin{remark}\label{rem:lie-algebra-strata}
  Despite the fact that $\left(\quot {N(H)}{H}\right) ^\circ$ and
  $\left(\quot {N(H^\circ)}{H^\circ}\right) ^\circ$ can have different
  dimensions, it is often sufficient to compute the Lie algebras in
  order to calculate the dimensions of the strata: For example, we know
  that $G_x \subset G_\mu$. If $\LG_\mu$ is the minimal isotropy Lie
  algebra that contains $\LG_x$, we can conclude that $L = G_\mu$.

  Moreover, by Remarks~\ref{rem:characterization_L_part1} and
  \ref{rem:characterization_L_part2}, we obtain further restrictions
  on $L$: We have seen that for points of $(V_0)_\tau$ we only have to
  consider coadjoint isotropy subgroups $M$ such that the set of
  weights of $V_0$ splits into subsets each of which has an affine
  span that intersects $(\LT^*)^M$ orthogonally in a single point
  contained in its convex hull. Thus for a given point
  $x \in (V_0)_\tau$ with momentum $\mu$ we determine the minimal
  coadjoint isotropy group $M$ of this type such that
  $M \subset G_\mu$. Then we know that $G_x \subset M$. Thus if the
  Lie algebra $\mathfrak m$ is the minimal Lie algebra of coadjoint
  subgroups of this type that contains $\mathfrak g_x$ , we deduce
  that $L = M$.
\end{remark}

\section{Examples of non-constant isotropy within the main stratum}

The $T$-relative equilibria that exist by
Theorem~\ref{thm:main-thesis} form a Whitney-stratified local set and
the strata have constant isotropy type with respect to the $T$-action.
This leads to the question if the $T$-relative equilibria of such a
stratum have the same isotropy type with respect to the $G$-action. In
general, this is not the case. In this section, we will discuss some
examples of this phenomenon.

Therefore we do not expect in general that the $G$-orbit a stratum of
$T$-relative equilibria consists of one stratum within the
Whitney-stratified local set of $G$-relative equilibria. Nevertheless,
this set is stratified by the $G$-isotropy type as we have shown in the
last section.

Since for every kernel $V_0 = \ker \der^2 (\h-\J^\xi)(0)$ as above the
local map $m_{V_0}$ preserves the $G$-isotropy groups of the points
contained in the principal stratum $(V_0)_\tau$ with respect to the
$T$-action, the computation of the isotropy groups or the isotropy Lie
algebras is pure representation theory. However, we will see that some
of our considerations that come from Hamiltonian dynamics are still
helpful:

It suffices to consider
$V_0\simeq\bigoplus_{\alpha \in S} \C_\alpha$ such that $S$ is as in
Theorem~\ref{thm:main-thesis} and each $S_i$ contains no pair
$\alpha \ne w\alpha$ for any reflection $w$ about a Weyl wall.

First of all, recall that $G_x \subset G_\mu$ if $\mu =: \J(x)$. Thus,
we only have to investigate the $G_\mu$-action. Moreover, we know from
Lemma~\ref{lem:mu_in_LT} that $\mu$ coincides with its projection to
$\LT^*$.  Therefore,
\begin{equation*}
  \mu =\pi \sum_{\alpha \in S}|x_{\alpha}|^2 \alpha,
\end{equation*}
when $x = \sum_{\alpha \in S} x_{\alpha}$ with
$x_{\alpha} \in \C_{\alpha}$.

W.~l.~o.~g. we assume that $x_{\alpha} \ne 0$ for every $\alpha \in
S$. Then by Corollary~\ref{cor:existence_tilde_V_0} $G_x$ is even
contained in the maximal subgroup $K \subset G_\mu$ such that $\LT^K$
intersects the affine span of $S$ orthogonally. To compute $G_x$, it
suffices to consider the components of $x$ with respect to a splitting
of $V$ into $K$-irreducible subspaces. Then $G_x=K_x$ is given by the
intersection of the isotropy subgroups of these components.

\begin{remark}\label{rem:irreducible-components}
  Recall that by 
  Remark~\ref{rem:characterization_L_part2} it suffices to consider
  the case that $\LT^K$ intersects the affine span
  of the set $S$ of weights of $V_0$ orthogonally in exactly one point
  contained in the inside of the convex hull of $S$.
  
  Moreover, we will assume w.~l.~o.~g.
  in the following that $V_0$ is contained in an irreducible
  $K$-subrepresentation, where $K$ is the maximal isotropy subgroup of
  the adjoint action such that $\LT^K$ intersects the affine span of
  $S$ orthogonally. To compute the isotropy subgroups in general, we
  then only have to intersect the isotropy subgroups of the components
  of $x$ corresponding to a splitting of $V_0$ into $K$-irreducible
  subspaces.
\end{remark}

To investigate the isotropy groups of points in $V_0$, we only have to
consider the unit sphere of $V_0$. Recall that we choose the
$G$-invariant inner product such that for
$x \in V_0 = \bigoplus_{\alpha \in S} \C_\alpha$ we obtain the norm
$\norm{x}^2 = \sum_{\alpha \in S}\abs{x_{\alpha}}^2$. Thus the momenta
of the elements of the unit sphere of $V_0$ are contained in the convex hull of
$S$.

Now we want to construct examples of representations $V^c$ and
subspaces $V_0\simeq \bigoplus_{\alpha \in S} \C_\alpha$ with $S$ as above with
the property that $(V_0)_\tau$ contains points of different isotropy
types. Then there has to be a point $x \in V_0$ with $G_x \ne T_x$ and
thus $G_\mu \ne T$ for $\mu = \J(x)$. Thus $\mu$ has to be contained
in a Weyl wall.

Thus we search weights $\alpha_1, \dots, \alpha_l$ such that there is
an isotropy subspace $(\LT^*)^K$ such that the orthogonal projections
of $\alpha_1, \dots, \alpha_l$ to $(\LT^*)^K$ coincide and the
intersection of $(\LT^*)^K$ with the interior of the convex hull of
$\alpha_1, \dots, \alpha_l$ is non-empty (and hence consists of one
point). W.~l.~o.~g we assume that $K$ is maximal with this
property. Let $\mu$ be the intersection point. Then $G_\mu = K$. By
Remark~\ref{rem:irreducible-components}, we assume that $V_0$ is
contained in the $G_\mu$-irreducible space $\tilde V_0$.
\cite[Chapter V, (8.1)]{btd} implies that $G_\mu$ has a connected
covering group of the form $C \times Z$, where $Z$ is the identity
component of the center of $G_\mu$ and $C$ is semi-simple. The
Lie algebra of the group $Z$ coincides with $\LT^{G_\mu}$ and
$\LG_\mu = \mathfrak c \oplus \LT^{G_\mu}$, where $\mathfrak c$
denotes the Lie algebra of $C$.  We can consider $\tilde V_0$ as a
$C$-representation. Since
$\tilde \LT:=(\LT^{G_\mu})^\perp \subset \LT$ is the Lie algebra of a
maximal torus of $C$, the weights of the $C$-representation
$\tilde V_0$ are given by their projections to the dual Lie algebra
$\tilde \LT^*$. The projection of $\mu$ to $\tilde \LT^*$ is
$0$. Hence the projections $\bar \alpha_1, \dots, \bar \alpha_l$ of
$\alpha_1, \dots, \alpha_l$ form a linearly dependent set $\bar S$,
but $\bar S$ is still independent in the affine sense: No point is
contained in the affine span of the others. Moreover, $\bar S$ is
maximal within its affine span. Consider the two following examples
of $C$-irreducible representations with subsets of this type:

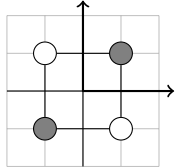
\begin{figure}[htbp]
  \centering
  \begin{tikzpicture}[every node/.style={circle,draw,fill =
        gray, inner sep = 0pt,
        minimum size =3mm, text = black}]
      \draw[very thin, lightgray, step =.5cm] (-1,-1) grid (1,1);
      \draw (-1,0) -- (0,0) -- (0,-1);
      \draw[thick,->] (0,0) -- (1.2,0);
      \draw[thick,->] (0,0) -- (0,1.2);
     \node[fill = white] (a) at (-.5,.5) {};
    \node (b) at (.5,.5) {};
    \node (c) at (-.5,-.5) {};
    \node[fill = white] (d) at (.5,-.5) {};
    \draw (a) edge[-] (b)
    edge[-] (c);
    \draw (d) edge[-] (b)
    edge[-] (c);
  \end{tikzpicture}
  \caption{$C= \SU(2) \times \SU(2)$, maximal weight $(1,1)$}
  \label{fig:c4}
\end{figure}

\begin{example}\label{ex:square}
  Suppose that $C=\SU(2)\times\SU(2)$ and consider the $C$-irreducible
  representation with maximal weight $(1,1)$ (with respect to the
  coordinate system corresponding to the basis given in
  Section~\ref{sec:su3} on each factor on the Lie algebra
  $\R \times \R$), see the weight diagram in Figure~\ref{fig:c4}. Then
  the weights $(-1,1)$ and $(1,-1)$ (depicted in white in
  Figure~\ref{fig:c4}) contain $0$ in their affine span.
\end{example}

\newlength{\uu}%
\newcommand{\mytzgrid}{%
  \draw[thin] (-3,0,0)--(3,0,0);
  \draw[thin] (0,-3,0)--(0,3,0);
  \draw[thin] (0,0,-3)--(0,0,3);
  \draw[thick,->] (0,0,0)--(3.2,0,0);
  \draw[thick,->] (0,0,0)--(0,3.2,0);
  \foreach \i in {1,2,3}{ \draw[very
    thin, lightgray] (-3,\i,0)--(3-\i,\i,0); \draw[very thin,
    lightgray] (-3+\i,-\i,0)--(3,-\i,0); \draw[very thin, lightgray]
    (0,-3,\i)--(0,3-\i,\i); \draw[very thin, lightgray]
    (0,-3+\i,-\i)--(0,3,-\i); \draw[very thin, lightgray] (\i,
    0,-3+\i)--(\i,0,3); \draw[very thin, lightgray] (-\i,
    0,-3)--(-\i,0,3-\i); } }%
\newcommand{\mytzcVI}{%
  \mytzgrid
  \draw[thin] (2,0,0)--(0,-2,0)--(0,0,2)--cycle;
  \draw[thin] (-1,0,0)--(0,1,0)--(0,0,-1)--cycle;
  \node[fill=white] (a) at (-1,0,0) {};
  \node (c) at (0,1,0) {} edge [-] (a);
  \node  (d) at (0,0,-1) {} edge [-] (a);
  \node[fill=white] (b) at (2,0,0) {} edge [-] (c)
  edge [-] (d);
  \node at (0,-2,0) {};
  \node at (0,0,2) {};
}

  \setlength{\uu}{.8cm}%
  \begin{figure}
    \centering
    \begin{tikzpicture}[x=\uu,y={(.5\uu,.866025\uu)},%
    z={(-.5\uu,.866025\uu)}, every node/.style={circle,draw,fill =
        gray, inner sep = 0pt,
        minimum size =4mm, text = black}]
      \mytzcVI
  \end{tikzpicture}%
    \caption{$C= \SU(3)$, maximal weight $(2,0)$}
    \label{fig:c6}
    \end{figure}
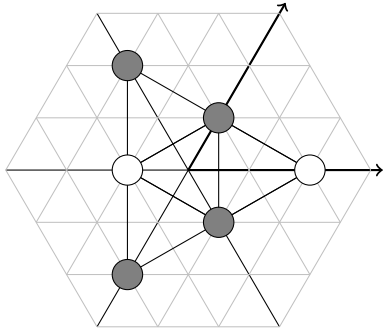
    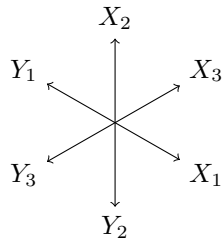
\begin{figure}
      \centering
      \begin{tikzpicture}[x=\uu,y={(.5\uu,.866025\uu)},%
        z={(-.5\uu,.866025\uu)}]
          \node (X1) at (1,0,-1) {$X_1$};
          \node (X2) at (0,1,1) {$X_2$};
          \node (X3) at (1,1,0) {$X_3$};
          \node (Y1) at (-1,0,1) {$Y_1$};
          \node (Y2) at (0,-1,-1) {$Y_2$};
          \node (Y3) at (-1,-1,0) {$Y_3$};
          \draw[<->] (X1) edge (Y1);
          \draw[<->] (X2) edge (Y2);
          \draw[<->] (X3) edge (Y3);
        \end{tikzpicture}
        \caption{Action of the root vectors of $\SU(3)$}
        \label{fig:A3}
  \end{figure}

\begin{example}\label{ex:triangle}
  Let $C$ be the group $\SU(3)$ and consider the $C$-irreducible
  representation with maximal weight $(2,0)$ (again, with respect to
  the coordinate system corresponding to the basis given in
  Section~\ref{sec:su3}), see Figure~\ref{fig:c6}. Then $0$ is
  contained in the affine span of the two weights $(2,0)$ and
  $(-1,0)$.
\end{example}

In both examples, the sets $\bar S$ have an important additional
property: We call a weight a \emph{neighbor} of another weight of an
irreducible representation of $K$ iff their difference coincides with
a root of $K$. In our examples, $\bar S$ consists of two weights which
have common neighbors. This makes possible that the isotropy type of
points of the same stratum of $T$-relative equilibria differs on the
Lie algebra level: If $x_\alpha \in \CC_\alpha$, then
$\mathfrak k x_\alpha$ is contained in the span of $\CC_\alpha$ and
the weight spaces corresponding to neighbors of $\alpha$. To obtain
$x= (x_\alpha, x_\beta)\in \CC_\alpha \oplus \CC_\beta$ and
$\xi \in \mathfrak k$ with $\xi x = 0$, but $\xi x_\alpha \ne 0$ and
$\xi x_\beta \ne 0$, $\alpha$ and $\beta$ hence need to have at least
one common neighbor.

In the figures with weight diagrams, there is an edge between weights
iff they are neighbors. The common neighbors of the weights of $\bar
S$ are given by $(1,1)$ and $(-1,-1)$ in Example~\ref{ex:square} and
by  $(0,1)$ and $(1,-1)$ in Example~\ref{ex:triangle}.

In the following, we will discuss some examples of representations for
which we obtain strata of $T$-relative equilibria with different
isotropy Lie algebras. Examples~\ref{ex:square} and \ref{ex:triangle}
occur as subspaces of these spaces.

We will restrict our
investigation to the computation of isotropy Lie algebras. As pointed
out in remark~\ref{rem:lie-algebra-strata}, this often suffices to
determine the dimensions of the strata and indeed this will be the
case for our example strata.

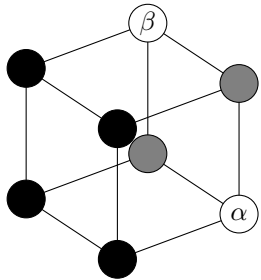
\begin{figure}[htbp]
  \centering
  \begin{tikzpicture}[x={(1.6cm,.6cm)},y={(1.2cm,-.8cm)},
    z={(0cm,1.732cm)}, every
    node/.style={circle,draw, fill = black, inner sep = 0pt,
      minimum size =5mm, text = black}]
      \draw (0,0,0) -- (1,0,0) -- (1,1,0) -- (0,1,0)-- cycle;
      \draw (0,0,1) -- (1,0,1) -- (1,1,1) -- (0,1,1)--cycle;
    \draw (0,0,0) -- (0,0,1);
    \draw (1,0,0) -- (1,0,1);
    \draw (0,1,0) -- (0,1,1);
    \draw (1,1,0) -- (1,1,1);
    \node [fill=white] at (1,1,0) {$\alpha$};
    \node [fill=white] at (1,0,1) {$\beta$};
    \node [fill=gray] at (1,0,0) {};
    \node [fill=gray] at (1,1,1) {};
    \node at (0,0,0)  {};
    \node at (0,0,1)  {};
    \node at (0,1,0)  {};
    \node at (0,1,1)  {};
  \end{tikzpicture}%
  \caption{$G= \SU(2)\times \SU(2) \times \SU(2)$, maximal weight $(1,1,1)$}
  \label{fig:c8}
\end{figure}

\begin{example}\label{ex:cube}
  Suppose that $G= \SU(2) \times \SU(2) \times \SU(2)$ and hence
  \begin{equation*}
    \LG= \su(2) \times \su(2) \times \su(2).
  \end{equation*}
  Let $V_1\simeq \CC^2$ denote the standard representation of $\SU(2)$. 
  Suppose that
  \begin{equation*}
    V^c \simeq V_1 \otimes V_1 \otimes V_1.
  \end{equation*}
  such that the $i$-th factor of $\SU(2)
  \times \SU(2) \times \SU(2)$ acts on the $i$-th factor of the tensor
  product, i. e.
  \begin{equation*}
    (g_1,g_2,g_3)( v_1\otimes v_2 \otimes v_3) = (g_1 v_1) \otimes (g_2
    v_2) \otimes (g_3 v_3).
  \end{equation*}
  The $\SU(2)$-representation $V_1$ has the weights $1$ and $-1$ with
  weight spaces spanned by $e_1$ and $e_2$ respectively (when we
  identify the maximal torus of $\SU(2)$ with $\quot \R \Z$ and its
  Lie algebra with $\R$, see Section~\ref{sec:su3}). Hence the weights of
  the representation $V^c$ are of the form $((-1)^i, (-1)^j, (-1)^k)$
  with $i, j, k \in \{0,1\}$. The Weyl walls coincide with the planes
  spanned by two coordinate axes.

  Now we consider the space
  \begin{equation*}
    V_0
    = \CC_{(1,1,-1)} \oplus \CC_{(1,-1,1)}=\langle e_1\otimes e_1
    \otimes e_2, e_1 \otimes e_2 \otimes e_1 \rangle . 
  \end{equation*}
  Since $\alpha:=(1,1,-1)$ and
  $\beta:=(1,-1,1)$ are linearly independent and are the maximal subset of
  weights within their affine span, there is a branch of relative
  equilibria tangent to $V_0$. The points $(x_\alpha, x_\beta)$ with
  $|x_\alpha| \ne 0 \ne |x_\beta|$ form the stratum $(V_0)_\tau$ of $V_0$ of
  minimal isotropy with respect to the $T$-action. Since
  \begin{equation*}
    \J(x_\alpha,x_\beta) = \pi |x_\alpha|^2\alpha + \pi
    |x_\beta|^2\beta = \pi((|x_\alpha|^2+  |x_\alpha|^2,
    |x_\alpha|^2- |x_\alpha|^2, |x_\beta|^2 -|x_\alpha|^2),
  \end{equation*}
  $\J(x_\alpha,x_\beta)$ is not contained in any Weyl wall if
  $|x_\alpha| \ne |x_\beta|$. In this case,
  $\LG_{\J(x_\alpha,x_\beta)}= \LT$ and hence $\LG_x = \LT_x$. If
  $|x_\alpha| = |x_\beta|$ in contrast, $\J(x_\alpha,x_\beta)=:\mu$ is
  contained in the intersection of two Weyl walls: the ones
  corresponding to the pair of roots $(0,\pm 2, 0)$ and to
  $(0,0, \pm 2)$ respectively.  Due to Lemma~\ref{lem:BTD},
  $\LG_\mu = \LT\oplus M_{(0,2,0)} \oplus M_{(0,0, 2)}$, where
  $M_{\alpha}$ denotes the real part of the sum of the weight spaces
  for $\pm \alpha$ in $\LG \otimes \C$ respectively. Thus we then
  obtain
   \begin{equation*}
     \LG_\mu = \setthat{
       \begin{pmatrix}
         A & &\\
         & B &\\
         & & C
       \end{pmatrix}}
    {A =
       \begin{pmatrix}
         a\I & 0\\
         0 & -a\I
       \end{pmatrix},
       a \in \RR,   {B, C \in \su(2)}}.
   \end{equation*}
   We know that $\LG_x \subseteq \LG_\mu$. The intersection
   $\LT \cap \LG_x = \LT_x$ is constant on the stratum
   $(V_0)_\tau = \setthat {(x_\alpha, x_\beta) \in V_0}{x_\alpha\ne 0,
     x_\beta \ne 0}$: For $x \in (V_0)_\tau$, we have
   $\LT_x = \langle (0, 1,1) \rangle$.

   To compute $\LG_x$ in the case $|x_\alpha| = |x_\beta|$, we
   investigate the action of the complexification
   $\LG_\mu\otimes \C \simeq \C\times \LSL(2,\CC) \times
   \LSL(2,\CC)$. We use the basis of $\LSL(2,\CC)$ given in
   \cite{hall2003lie}:
   \begin{equation*}
     X =
     \begin{pmatrix}
       0&1 \\
       0&0
     \end{pmatrix},
     Y =
     \begin{pmatrix}
       0&0\\
       1&0
     \end{pmatrix},
     H=
     \begin{pmatrix}
       1&0\\
       0&-1
     \end{pmatrix}.
   \end{equation*}
   Then a basis of
   $\LG_\mu\otimes \C \subset \LSL(2,\C) \times \LSL(2,\C)\times
   \LSL(2, \C)$ is given by $H_1:= (H, 0,0)$, $H_2:= (0,H,0)$,
   $H_3:=(0,0,H)$, $X_2:= (0,X,0)$, $X_3:= (0,0,X)$, $Y_2:= (0,Y,0)$,
   and $Y_3:= (0,0,Y)$. We search for elements
   $Z \notin \LT\otimes \C = \langle H_1, H_2, H_3 \rangle$ with
   $Zx=0$ for $x \in V_0$ with $|x_\alpha| = |x_\beta| \ne 0$. While
   the elements of $\LT \otimes \C$ preserve the weight spaces, the
   elements $X_2$, $X_3$, $Y_2$, and $Y_3$ are root vectors and hence
   cause shifts between the weight spaces. In more detail,
   \begin{align*}
     X_2(e_1\otimes e_1 \otimes e_2) = 0, \quad & X_2(e_1\otimes e_2
     \otimes e_1) = e_1 \otimes e_1 \otimes e_1\\
     X_3(e_1\otimes e_1 \otimes e_2) = e_1 \otimes e_1 \otimes e_1,
     \quad & X_3(e_1\otimes e_2
     \otimes e_1) = 0\\
     Y_2(e_1\otimes e_1 \otimes e_2) = e_1 \otimes e_2 \otimes e_2,
     \quad &
     Y_2(e_1\otimes e_2 \otimes e_1) = 0\\
     Y_3(e_1\otimes e_1 \otimes e_2) = 0,
     \quad & Y_3(e_1\otimes e_2 \otimes
     e_1) = e_1 \otimes e_2 \otimes e_2.
   \end{align*}
   Thus the space $V_0$ is contained in the $\LG_\mu$-invariant subspace
   \begin{equation*}
     \tilde V_0 = \langle <e_1\otimes e_1 \otimes e_1,\, e_1 \otimes e_1
     \otimes e_2\,, \e_1 \otimes e_2 \otimes e_1,\, e_1 \otimes e_2
     \otimes e_2>.
   \end{equation*}
   See Figure~\ref{fig:c8} for a weight diagram of $V^c$. The weights
   of $V_0$ are depicted in white, the other ones of $\tilde V_0$ in gray.
   
   We recognize Example~\ref{ex:square}: We can regard $\tilde V_0$ as
   a representation of the Lie algebra
   $ \LSL(2, \C) \times \LSL(2, \C) $, the complexified Lie algebra of
   $\SU(2) \times \SU(2)$ identified with the second and third
   component of $G$. Then the weights of $\tilde V_0$ are simply
   $(1,1)$, $(1,-1)$, $(-1,1)$ and $(-1,-1)$ and $V_0$ is spanned by
   the weight spaces of $(1,-1)$ and $(-1,1)$.
 
   Suppose that $Z x = 0$ for some $Z \in \LG_\mu\otimes \C$ and a
   given $x= (x_\alpha, x_\beta) \in V_0$ with $x_\alpha \ne 0$. Split
   $Z$ into $Z = Z_T + Z'$ with $Z_T \in \LT$ and
   $Z' \in \langle X_2, X_3, Y_2, Y_3 \rangle$. Then
   $Z_TV_0 \subset V_0$, while $Z'V_0$ is contained in
   $\langle e_1 \otimes e_1 \otimes e_1, e_1 \otimes e_2 \otimes e_2
   \rangle$, which is a complement of $V_0$ within $\tilde V_0$. Thus
   we have $Z_T x = 0$ and $Z' x =0$. Moreover, if we split $Z'$
   further into $Z' = Z_X + Z_Y$ with
   $Z_X \in \langle X_2, X_3 \rangle$ and
   $Z_Y \in \langle Y_2, Y_3 \rangle$, then
   $Z_Xx \in \langle e_1 \otimes e_1 \otimes e_1 \rangle$ and
   $Z_Y \in \langle e_1 \otimes e_2 \otimes e_2 \rangle$. Thus
   $Z_X x = 0$ and $Z_Y x =0$. Suppose that $Z_X = a X_2 +b X_3$ and
   $Z_Y = c Y_2 + d Y_3$, $a, b, c, d \in \C$. Then
   \begin{align*}
     bx_\alpha + ax_\beta = 0\\
     cx_\alpha + dx_\beta = 0.
   \end{align*}
   The solutions $a, b, c, d$ correspond to elements of the isotropy Lie
   subalgebra of the complexified Lie algebra $\LG_\mu\otimes\C$. To
   determine $\LG_x$, we have to identify the real solutions, i. e. the
   solutions in $\LG_\mu$. $Z$ is contained in $\LG_\mu$ iff this holds
   for $Z_T$ and $Z'$. Furthermore, $Z' \in \LG_\mu$ is equivalent to
   $c = -\bar a$ and $d = -\bar b$. Thus for a fixed
   $Z' \in \LT^\perp\subset \LG_\mu$ there is a non-trivial solution
   $x=(x_\alpha, x_\beta) \in V_0$ of $Z'x = 0$ iff
   \begin{equation*}
     \det
     \begin{pmatrix}
       b & a\\
       -\bar a & -\bar b
     \end{pmatrix}
     = |a|^2 - |b|^2 = 0.
   \end{equation*}
   If this condition is satisfied, the kernel is spanned by
   $(a, -b)^T$.  Hence we obtain again the necessary condition
   $|x_\alpha| = |x_\beta|$ from above.  Conversely, if
   $|x_\alpha| = |x_\beta|$, there is a non-zero $Z'$ in the
   complement of $\LT$ within $\LG_\mu$ that satisfies $Z'x =
   0$. Explicitly, the solutions of $Z'$ are given by the vectors $(a, b)$
   in the complex span of $\langle x_\alpha, - x_\beta \rangle$ and
   the requirement $a = -\bar c $ and $b = -\bar d$. Thus, the points
   $x$ with $|x_\alpha| =|x_\beta|$ have a 3-dimensional isotropy Lie
   algebra $\LG_x$ of rank 1, which is hence isomorphic to
   $\su(2)$. In contrast, the isotropy Lie algebra of the other points
   of $(V_0)_\tau$ is just $\LG_x = \LT_x = \langle(0,1,1)\rangle$.

   Thus the $G$-orbit of the stratum $m_{V_0}((V_0)_\tau)$ of
   $T$-relative equilibria consists of two strata: Since
   $G_x \subset T$ for every point $x$ of $(V_0)_\tau$ with
   $\abs{x_\alpha} \ne \abs{x_\beta}$ and
   $(\LT') ^T=\LT'= (\LT_x)^\perp \subset \LT$ is 2-dimensional, the
   points whose isotropy Lie algebra is conjugated to
   $\langle(0,1,1)\rangle$ form a manifold of dimension
   \begin{equation*}
     \dim \LG - \dim \langle(0,1,1)\rangle + \dim \LT' = 9 -1 + 2 = 10.
   \end{equation*}
   For $x \in (V_0)_\tau $ with $\abs{x_\alpha} = \abs{x_\beta}$, the
   momentum $\mu$ is contained in
   $\langle (1,0,0)\rangle = (\LT^*)^{G_\mu}\cong (\LT')^{G_\mu}$ and
   $G_\mu$ is the minimal coadjoint isotropy group which contains
   $G_x$. Thus the stratum of points with isotropy Lie algebras
   isomorphic to $\su(2)$ has the dimension
   \begin{equation*}
     \dim \LG - \dim \LG_x + \dim (\LT')^{G_\mu} = 9 -3 + 1 = 7.
   \end{equation*}
 \end{example}
 \begin{figure}[htbp]
   \centering
   \begin{tikzpicture}[x={(.8cm,.3cm)},y={(.6cm,-.4cm)}, z={(0cm,.866cm)},
     every node/.style={circle,draw,fill = black, inner sep = 0pt,
       minimum size =5mm, text = white}]
     \coordinate (a) at (0,0,4.2426);
     \coordinate (b) at (xyz polar cs:angle=30,radius=3);
     \coordinate (c) at (xyz polar cs:angle= -90,radius=3);
     \coordinate (d) at (xyz polar cs:angle=150,radius=3);
     \draw (a) -- (b) -- (c) -- (a);
     \draw (a) -- (d);
     \draw (b) -- (d);
     \draw (c) -- (d);
     \node[fill=white] at (a) {};
     \node[fill=gray] at (b) {};
     \node at (c) {};
     \node[fill=gray] at (d) {};
     \node[fill=gray] (e) at  (barycentric cs:a=1,b=1) {};
     \node (f)  at (barycentric cs:a=1,c=1) {};
     \node[fill=gray] (g)  at (barycentric cs:a=1,d=1) {};
     \node (h)  at (barycentric cs:b=1,c=1) {};
     \node[fill=white] (i)  at (barycentric cs:b=1,d=1) {};
     \node (j)  at (barycentric cs:c=1,d=1) {};
     \draw (e) edge (f) edge (g) edge (h) edge (i);
     \draw (f) edge (i) edge (h) edge (g) edge (j);
     \draw (j) edge (g) edge (h);
     \draw (i) edge (j) edge (h) edge (g);
   \end{tikzpicture}
   \caption{$G=\SU(4)$, maximal weight $(2,0,0)$}
   \label{fig:c10}
 \end{figure}
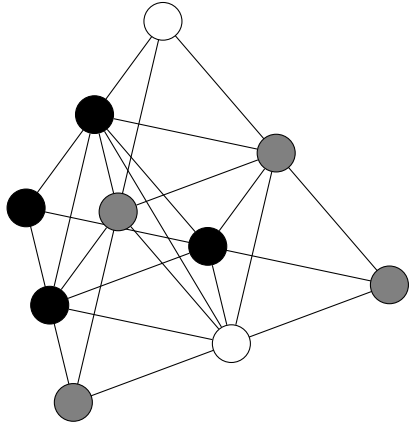
 \begin{example}\label{ex:pyramid}
   Suppose that there is a point $x_0 \in V_0$ with momentum $\mu$
   such that $G_\mu$ is covered by a group $\SU(3)\times T'$, where
   $T'$ is a torus, and that $V_0$ is contained in an irreducible
   $G_\mu$-subrepresentation $\tilde V_0$ of $V^c$ such that the
   situation is as in Example~\ref{ex:triangle}. More precisely, we
   assume that $\tilde V_0$ regarded as an $\SU(3)$-representation is
   irreducible with maximal weight $(2, 0)$ and $V_0$ is given by
   the sum of the two weight spaces corresponding to the weights
   $(2,0)$ and $(-1,0)$. We will see that, as in
   Example~\ref{ex:cube}, the main part of the calculation is
   independent of the actual embedding of $\tilde V_0$ into
   $V^c$. Thus we will start with the calculation of the $\su(3)$-part
   of the isotropy Lie algebras. Afterwards, we will discuss a
   concrete example of a $G$-representation $V^c$ with the
   $\SU(3)$-subrepresentation $\tilde V_0$.

   Again, we study the action of the complexified Lie algebra
   $\SU(3) \otimes \C \simeq \LSL(3,\C)$. We adopt the notation of
   \cite{hall2003lie}: We consider the following basis of
   $\LSL(3,\C)$:
   \begin{align*}
     H_1 =
     \begin{pmatrix}
       1&0&0\\
       0&-1&0\\
       0&0&0
     \end{pmatrix}, \quad
        &H_2 =
          \begin{pmatrix}
            0&0&0\\
            0&1&0\\
            0&0&-1
          \end{pmatrix},\\
     X_1 =
     \begin{pmatrix}
       0&1&0\\
       0&0&0\\
       0&0&0
     \end{pmatrix}, \quad
        &X_2 =
          \begin{pmatrix}
            0&0&0\\
            0&0&1\\
            0&0&0
          \end{pmatrix}, \quad
                 X_3 =
                 \begin{pmatrix}
                   0&0&1\\
                   0&0&0\\
                   0&0&0
                 \end{pmatrix},\\
     Y_1 =
     \begin{pmatrix}
       0&0&0\\
       1&0&0\\
       0&0&0
     \end{pmatrix}, \quad
        &Y_2 =
          \begin{pmatrix}
            0&0&0\\
            0&0&0\\
            0&1&0
          \end{pmatrix}, \quad
                 Y_3 =
                 \begin{pmatrix}
                   0&0&0\\
                   0&0&0\\
                   1&0&0
                 \end{pmatrix}.
   \end{align*}
   $H_1$ and $H_2$ form a basis of $\LT$ and the vectors $X_i$, $Y_i$
   are root vectors. If $X_i$ corresponds to the root $\rho_i$, then
   $Y_i$ corresponds to $-\rho_i$. With respect to the dual basis of
   $H_1$ and $H_2$, we have
   \begin{equation*}
     \rho_1 = (2, -1), \quad \rho_2 = (-1,2), \quad 
     \rho_3 = \rho_1 + \rho_2 = (1,1).
   \end{equation*}
   If $x \in \tilde V_0$ is a weight vector with the weight $\alpha$,
   then $X_ix$ either is a weight vector corresponding to
   $\alpha+\rho_i$ or $\alpha+\rho_i$ is not a weight of $\tilde V_0$
   and $X_ix=0$. Similarly, if $\alpha-\rho_i$ is a weight of
   $\tilde V_0$, then $Y_ix$ is a weight vector corresponding to
   $\alpha-\rho_i$. In particular, if we consider the weight spaces of
   $V_0 \simeq \C_{(2,0)}\oplus\C_{(-1,0)}$, we obtain
   \begin{align*}
     &\begin{matrix}
       X_1\left(\C_{(2,0)}\right)=\{0\},
       &X_2\left(\C_{(2,0)}\right)=\{0\},
       & X_3\left(\C_{(2,0)}\right)=\{0\},\\
       Y_1\left(\C_{(2,0)}\right)= \C_{(0,1)},
       & Y_2\left(\C_{(2,0)}\right)= \{0\},
       & Y_3\left(\C_{(2,0)}\right)= \C_{(1,-1)},\\                              
     \end{matrix}
     \\
     &\begin{matrix}
       X_1\left(\C_{(-1,0)}\right)=\C_{(1,-1)},
       &X_2\left(\C_{(-1,0)}\right)=\C_{(-2,2)},
       & X_3\left(\C_{(-1,0)}\right)=\C_{(0,1)},\\
       Y_1\left(\C_{(-1,0)}\right)= \{0\},
       & Y_2\left(\C_{(-1,0)}\right)= \C_{(0,-2)},
       & Y_3\left(\C_{(-1,0)}\right)= \{0\},\\                                
     \end{matrix}
   \end{align*}
   see Figures~\ref{fig:c6} and \ref{fig:A3}.

   The elements $X_2, Y_2$ act trivially on $\C_{(2,0)}$, but not on
   $\C_{(-1,0)}$ and the span of $X_2(\C_{(-1,0)})$ and
   $Y_2(\C_{(-1,0)})$ intersects the space $\LSL(3,\C)\left( \C_{(2,0)}\right)$
   only in $0$. Hence, if $Zx= 0$ for some $x \in (V_0)_\tau$ and
   $Z = Z_T + Z'$ with $Z_T \in \LT = \langle H_1, H_2\rangle$ and
   $Z' \in \langle X_1,X_2,X_3, Y_1, Y_2, Y_2 \rangle$, then $Z'$ has
   to be a linear combination of elements $X_1, X_3, Y_1, Y_3$.

   To determine solutions $Z',x$ of $Z'x=0$, we represent the linear
   maps $X_1, X_3, Y_1, Y_3$ with respect to an orthonormal basis of
   $\tilde V_0$.

   The $\SU(3)$-representation $\tilde V_0$ has maximal weight $(2,0)$
   and is hence isomorphic to the $\SU(3)$-representation on the space
   of homogeneous polynomials of degree 2 in 3 complex variables, see
   \cite{fulton_1996}. Thus an explicit description of the action is
   well-known and clear. However, we will follow another approach,
   which is also suitable for more complicated representations: We fix
   an $\SU(3)$-invariant Hermitian inner product. Since it is in
   particular $T$-invariant, the weight spaces are orthogonal to each
   other.  Thus we obtain an orthogonal basis of the $\C$-vector space
   $\tilde V_0$ if we choose one nontrivial subvector of every weight
   space. To obtain an orthonormal basis, we only have to determine
   the normalization factors.

   Now we consider the $3$-dimensional subspace
   $\C_{(2,0)}\oplus \C_{(0,1)} \oplus \C_{(-2,2)}$. This space is an
   irreducible representation of the Lie algebra generated by $H_1$,
   $X_1$, and $Y_1$, which is isomorphic to
   $\LSL(2,\C) \simeq \su(2)\otimes \C$. Hence it can be regarded as a
   3-dimensional complex representation of $\SU(2)$. Thus it is
   isomorphic to the one described in \cite[Section 4.2, Example
   4.10]{hall2003lie}, which consists of the homogeneous polynomials
   of degree 2 on $\C^2$. The weight spaces are spanned by the
   monomials $z_1^2$, $z_1z_2$, and $z_2^2$ with weights $-2$, $0$,
   and $2$ respectively. We have
   \begin{equation*}
     \begin{matrix}
       X_1(z_1^2) = -2z_1z_2,&
       X_1(z_1z_2) = - z_2^2,& X_1(z_2^2) = 0,\\
       Y_1(z_1^2) =0, &
       Y_1(z_1z_2) = - z_1^2,& Y_1(z_2^2) = -2 (z_1z_2).
     \end{matrix}
   \end{equation*}
   With respect to any orthonormal basis for a
   $\SU(2)$-invariant inner product, the action of $X_1 -Y_1$ is
   represented by a skew-Hermitian matrix. For the basis
   $z_1^2$,$z_1z_2$, and $z_2^2$, we obtain
   \begin{equation*}
     X_1-Y_1 =
     \begin{pmatrix}
       0&1&0\\
       -2&0&2\\
       0&-1&0
     \end{pmatrix}
   \end{equation*}
   Thus for an orthonormal basis $\lambda_1z_1^2$, $\lambda_2z_1z_2$, and
   $\lambda_3z_2^2$ with $\lambda_1,\lambda_2, \lambda_3 >0$, the matrix is
   \begin{equation*}
     X_1-Y_1 =
     \begin{pmatrix}
       0&\frac{\lambda_2}{\lambda_1}&0\\
       -2\frac{\lambda_1}{\lambda_2}&0&2\frac{\lambda_3}{\lambda_2}\\
       0&-\frac{\lambda_2}{\lambda_3}&0
     \end{pmatrix}.
   \end{equation*}
   This is skew-Hermitian for $\frac{\lambda_2}{\lambda_1} =
   \frac{\lambda_2}{\lambda_3} = \sqrt 2$. In particular,
   $\norm{Y_1(z)} = \sqrt{2} \norm{z}$ for $z \in
   \C_{(2,0)}$.

   Analogously we deduce that $\norm{Y_3(z)} = \sqrt{2} \norm{z}$ for
   $z \in \C_{(2,0)}$, $\norm{X_1(z)} = \norm{z}$ for
   $z \in \C_{(-1,0)}$, and $\norm{X_3(z)} = \norm{z}$ for
   $z \in \C_{(-1,0)}$. Since $[Y_1,Y_3]=0$, we can choose orthonormal
   vectors of the form
   \begin{align*}
     e_{(2,0)} \in \C_{(2,0)}, \quad
     e_{(0,1)} = \frac 1 {\sqrt 2} Y_1 e_{(2,0)}, \quad
     e_{(1,-1)} = \frac 1 {\sqrt 2} Y_3 e_{(2,0)},\\
     e_{(-1,0)} = Y_1  e_{(1,-1)}  = Y_3 e_{(0,1)}.
   \end{align*}
   Then a linear map
   $aX_1-\bar a Y_1 + b X_3 - \bar b Y_3$, $a, b \in \C$ from
   $V_0 = \C_{(-1,0)} \oplus \C_{(2,0)}$ to
   $\C_{(0,1)} \oplus \C_{(1,-1)}$ is represented by the matrix
   \begin{equation*}
     \begin{pmatrix}
       b & -\sqrt{2}\bar a \\
       a & -\sqrt{2} \bar b
     \end{pmatrix}.
   \end{equation*}
   Since the determinant is $\sqrt 2 (\abs{a}^2 - \abs{b}^2)$, the
   kernel is non-trivial iff $\abs{a} = \abs{b}$. If
   $(x_{(-1,0)}, x_{(2,0)})$ is a kernel element, we thus have
   $\abs{x_{(-1,0)}} = \sqrt{2} \abs{x_{(2,0)}}$, which is satisfied
   iff the momentum of $x$ with respect to the $\SU(3)$-action
   vanishes. Vice versa, if $x = (x_{(-1,0)}, x_{(2,0)})$ satisfies
   this condition and $x_{(-1,0)} = \sqrt 2 \theta x_{(2,0)}$, then
   for any $a \in \C$, $x$ is in the kernel of the above matrix iff
   $b := \overline{\theta a}$. Therefore $\su(3)_x$ is a 3-dimensional
   Lie subalgebra of the real Lie algebra $\su(3)$. Again, since
   $\su(3)_x$ is the Lie algebra of the compact group $\SU(3)_x$ and
   since $\su(3)_x$ is simple, $\su(3)_x \simeq \su(2)$.
   
   We now give an example of a group $G$ and a $G$-representation
   $V^c$ such that generically, if the center space of $\der X_h(0)$
   is isomorphic to $V^c$, there is a $\xi$ in $\LT^K$ with
   $K \simeq \SU(3) \times T'$ for some torus $T'$ such that
   $\der^2(h-\J^\xi)(0) = \tilde V_0$ is of the above form: Set
   $G = \SU(4)$ and consider the basis of $\LT\otimes \C$ formed by
   the diagonal matrices $H_1$, $H_2$, and $H_3$ with diagonal entries
   $(1,-1,0,0)$, $(0,1,-1,0)$, and $(0,0,1,-1)$ respectively. Let
   $V^c$ be the representation with maximal weight $\lambda=(2,0,0)$
   with respect to the dual basis of $\{H_1, H_2, H_3\}$. Now we
   change the basis of $\LT$ and replace $H_3$ by the diagonal matrix
   with diagonal entries $(1,1,1,-3)$, which we denote by $H_3'$. With
   respect to the dual basis of $\{H_1, H_2, H_3'\}$, $\lambda$ is
   given by $(2,0,2)$.

   When we consider the complexified adjoint action, $H_3'$ is
   contained in the kernel of all elements of $\LSL(4,\C)$ of the form
   \begin{equation*}
     \begin{pmatrix}
       &&&\aug& 0\\
      &Z&&\aug& 0\\
      &&&\aug& 0\\
      \hline
      0&0&0&\aug& 0
    \end{pmatrix} =: \bar Z,
  \end{equation*}
  $Z \in \LSL(3,\C)$. The span of this subalgebra and $\LT\otimes \C$
  generates the complexified isotropy Lie algebra of $H_3'$ within
  $\LSL(4,\C)$, which is isomorphic to the Lie algebra
  $\LSL(3,\C) \oplus \C$: The isomorphism is given by
  \begin{equation*}
    (Z,z) \in \LSL(3,\C) \oplus \C \mapsto
    \bar Z + z H_3' \in \LSL(4,\C)_{H_3'}.
  \end{equation*}
  The intersection of the complex Lie algebra $\LSL(4,\C)_{H_3'}$ with
  $\LG$ is isomorphic to $\su(3) \oplus \R$ and hence the group
  $G_{H_3'}$ is isomorphic to $\SU(3) \times S^1$. (It is given by the
  elements
  \begin{equation*}
    \begin{pmatrix}
      &&&\aug& 0\\
      &U&&\aug& 0\\
      &&&\aug& 0\\
      \hline
      0&0&0&\aug& (\det U)^{-1}
    \end{pmatrix},
  \end{equation*}
  $U \in \U (3)$, of $\SU(4)$.)
  
  The dual vector $\nu=(0,0,1)$ of $H_3'$ has the same isotropy
  subgroup $G_\nu$. Moreover, the roots of $G_\nu$ are the vectors
  given by $(r_1, r_2,0)$ (with respect to the dual basis of
  $\{H_1, H_2, H_3'\}$) such that $(r_1, r_2)$ is a root of $\SU(3)$
  (with respect to the dual basis of $\{H_1, H_2\}$). Thus
  $(\alpha_1,\alpha_2,2)$ is a weight of $V^c$ iff
  $(\alpha_1, \alpha_2)$ is a weight of the $\SU(2)$-representation
  with maximal weight $(2,0)$. Moreover, all weights of $V^c$ occur
  with multiplicity $1$. (This follows from the representation theory
  of $\SU(n)$ given by Young tableaux, see \cite{fulton_1996}. $V^c$
  coincides with the representation of $\SU(4)$ on the space of
  homogeneous polynomials of degree 2 in four complex variables.)
  
  Thus the weight spaces corresponding to weights of the form
  $(\alpha_1,\alpha_2,2)$ span a subspace $\tilde V_0$ as described
  above.

  See Figure~\ref{fig:c10} for a weight diagram of $V^c$. The weights
  of $V_0=\C_{(2,0,2)}\oplus \C_{(-1,0,2)}$ are depicted in white, the
  other weights of $\tilde V_0$ in gray.

  For $x \in (V_0)_\tau$ with
  $\abs{x_{(-1,0,2)}} = \sqrt{2} \abs{x_{(2,0,2)}}$, the isotropy Lie
  algebra $\LG_x$ is isomorphic to $\su(2)$: The affine span of the
  weights of $\tilde V_0$ is orthogonal to $\langle \nu \rangle$ and
  intersects $\langle \nu \rangle$ in the single point $2\nu$. Since
  $\langle \nu \rangle =((\LT)^*)^{G_\mu}$, all isotropy subgroups of
  elements of $(V_0)_\tau$ are contained in $G_\nu$, see
  Lemma~\ref{lem:tildeV01}. We can identify the roots of
  $G_\nu \cong \SU(3)\times S^1$ with those of $\SU(3)$.

  Again, $(\LG_\nu)_x$ is given by the span of $\LT_x$ and the space
  of solutions $\xi$ of the equation $\xi x=0$ that are contained in
  the real part of the span of the root vectors. The solutions $\xi$
  are determined by the above calculation. Moreover, under the
  identification $(\LT)^* \cong \LT$ via a $T$-invariant inner
  product, the space $\LT_x$ is given by the orthogonal complement of
  the span of the weights of $V_0$ for any $x \in (V_0)_\tau$. Since
  the Lie algebra of the $S^1$-factor of $G_\nu$ is contained in the
  span of the weights of $V_0$, we also can identify $\LT_x$ with the
  isotropy Lie algebra of $x$ with respect to the Lie algebra of the
  corresponding maximal torus of $\SU(3)$. Hence $(\LG_\nu)_x$ is
  isomorphic to $\su(3)_x\cong \su(2)$.

  The stratum of points of this isotropy type has the dimension
  \begin{equation*}
    \dim G -\dim G_x +(\LT')^L = 15-3+1 = 13,
  \end{equation*}
  since $\J(x) \in \langle \nu\rangle$ and $L=L(G_x) = G_\nu$.
  
  For any $x \in (V_0)_\tau$ with
  $\abs{x_{(-1,0,1)}} \ne \sqrt{2} \abs{x_{(2,0,1)}}$, we obtain
  $\LG_x = \LT_x$, which is $1$-dimensional. Moreover, the isotropy
  group of
  \begin{equation*}
    \mu:=\J(x)
    = \sum \abs{x_{(-1,0,2)}}^2 (-1,0,2) +\abs{x_{(2,0,2)}}^2(-1,0,2)
  \end{equation*}
  is given by the connected subgroup of $G = \SU(4)$ whose Lie
  algebra is spanned by $\bar X_2$, $\bar Z_2$ and $\LT$. Hence
  $\C_{(-1,0,2)}$ and $\C_{(2,0,2)}$ are contained in different
  irreducible $G_\mu$-subrepresentations of $V^c$ and thus $G_x$
  coincides with $(G_\mu)_{(2,0,2)} \cap (G_\mu)_{(-1,0,2)}$. Hence all
  points of $(V_0)_\tau$ with $\abs{x_{(-1,0,1)}} \ne \sqrt{2}
  \abs{x_{(2,0,2)}}$ are of the same isotropy type and the isotropy
  subgroups are conjugated by elements of $T$. Thus $(\LT')^L = \LT'$
  and we obtain
  \begin{equation*}
    \dim G -\dim G_x +(\LT')^L = 15 -1+2 =16.
  \end{equation*}

  We also want to point out, that $G_x \subset G_\nu$ holds for all
  points $x \in (V_0)_\tau$ but not for all $x \in V_0$: If
  $x \in \C_{(2,0,2)}$ and $\mu := \J(x)$, then $G_\mu$ is a group
  isomorphic to $\U(3)\simeq \SU(3) \times S^1$ and not contained in
  $G_\nu$. Moreover, for every root $\rho$ of $G_\mu$ the form
  $(2,0,1) +\rho$ is not a weight of $V^c$ and hence $G_x = G_\mu$.
\end{example}
\begin{example}
  We now consider a configuration similar to that in
  Examples~\ref{ex:square} and \ref{ex:cube}: We consider the
  irreducible $\SU(4)$-representation $\tilde V_0$ with maximal weight
  $\alpha:=\frac 1 2(1,1,-1,-1)$, where we identify $\LT$ with the
  subspace of points $(t_1, t_2, t_3, t_4)$ of $\R^4$ with
  $t_1+t_2+t_3+t_4 = 0$. The weights of this representation are given
  by the Weyl group orbit of $\alpha$.

  We consider the two weights $\alpha$ and $\beta:=-\alpha$, which are
  linearly dependent but obviously independent in the affine sense.

  If $x=(x_\alpha, x_\beta) \in V_0:=\C_\alpha+\C_\beta$ and $\norm{x} =
  1$, then $\J(x) = 0$ iff $\abs{x_\alpha}= \abs{x_\beta}= \frac 1
  {\sqrt 2}$.

  The four other weights of the representation are all common
  neighbors of $\alpha$ and $\beta$. The roots of $\SU(4)$ are given
  by
  \begin{align*}
    \pm \rho_1:= \pm (1,-1,0,0), \quad &  \pm \rho_2:= \pm (0,0,1,-1),\\
    \pm \rho_3:= \pm (0,1,-1,0),\quad &  \pm \rho_4:= \pm (1,0,0,-1),\\
    \pm \rho_5:= \pm (1,0,-1,0),\quad &  \pm \rho_6:= \pm (0,1,0,-1).\\
  \end{align*}
  Let $X_i$ and $Y_i$ be the root vectors given by matrices with a single
  non-vanishing entry 1 corresponding to $\rho_i$ and $-\rho_i$
  respectively. Then the linear maps defined by $X_1$, $X_2$, $Y_1$,
  and $Y_2$ vanish on $\C_\alpha$ and $\C_\beta$. If
  $\abs{x_\alpha}\ne \abs{x_\beta}$, then $\J(x)$ is contained exactly
  in the Weyl walls corresponding to $\pm \rho_1$ and $\pm
  \rho_2$. Thus in this case, $\LSL(4,\C)_x$ is
  given by the sum of the 2-dimensional complex space $\LT_x\otimes \C$
  and the span of $X_1, X_2, Y_1,Y_2$ which is isomorphic to $\LSL(2,\C)
  \times \LSL(2,\C)$. Hence $\su(4)_x$ is isomorphic to $\su(2) \times
  \su(2)$.

  If $\abs{x_\alpha} =\abs{x_\beta}$, then $\J(x)=0$ and hence we have
  to consider the action of every root of $\SU(4)$. Again,
  $\LSL(4,\C)_x$ contains $\LT_x\otimes \C$
  and the span of $X_1, X_2, Y_1,Y_2$, but in addition a subspace of
  the span of $X_i,Y_i$, $i \in \{3,4,5,6\}$:

  Since the differences of one of the weights
  $\pm\frac 1 2(1,-1,1,-1)$ with one of the weights $\alpha, \beta$
  coincide with one of the roots $\pm \rho_3, \pm \rho_4 $, while the
  differences of one of the weights $\pm\frac 1 2(1,-1,-1,1)$ with one
  of the weights $\alpha, \beta$ coincide with one of the roots
  $\pm \rho_5, \pm \rho_6 $, we obtain two independent linear systems
  of linear equations similar to the linear system that we know from
  Example~\ref{ex:cube}, one involves the coefficients of
  $X_3, X_4, Y_3, Y_4$ and the other the ones of $X_5, X_6, Y_5,
  Y_6$. Thus $\su(4)_x$ is a 10-dimensional group of rank 2. Indeed,
  $\su(4)_x$ is isomorphic to $\mathfrak {so}(5)$. (To see this,
  compute the roots of $\su(4)_x$. The root system is of the type
  $B_2$, which is the class of the root system of $\mathfrak {so}(5)$,
  see \cite[Chapter 5]{btd}).

  When for example $G= \SU(5)$ and $V^c$ is the representation with
  maximal weight $\frac 1 5 (3,3,-2,-2,-2)$, then the affine set
  spanned by the weights $\frac 1 5 (3,3,-2,-2,-2)$ and $\frac 1 5
  (-2,-2,3,3,-2)$ contains the point $\frac 1 5 (1,1,1,1,-4)$, which is
  fixed by all reflections corresponding to roots with the last entry
  $0$. Thus its coadjoint isotropy group $K$ is generated by the image
  of the embedding $\SU(4) \to \SU(5)$, given by
  \begin{equation*}
    A \mapsto  \begin{pmatrix}
      &&&\aug& 0\\
      &A&&\aug& 0\\
      &&&\aug& 0\\
      \hline
      0&0&0&\aug& 1
    \end{pmatrix},
  \end{equation*}
  and the group of matrices $\exp(t\cdot \diag(1,1,1,1,-4))$,
  $t\in \R$, which is isomorphic to $S^1$. Since $((\LT)^*)^K$ is
  orthogonal to the affine span of
  $\bar \alpha=\frac 1 5 (3,3,-2,-2,-2)$ and
  $\bar \beta=\frac 1 5 (-2,-2,3,3,-2)$ and intersects it in a single
  point, the isotropy subgroups of points in $(V_0)_\tau$ are
  contained in $K$.  $V_0$ is contained in the space $\tilde V_0$
  spanned by the weight spaces corresponding to weights with last
  entry $-2$. Since the weights of $V^c$ are given by the elements of
  the Weyl group orbit of $\frac 1 5 (3,3,-2,-2,-2)$, $\tilde V_0$
  regarded as an $\SU(4)$-representation has maximal weight
  $\frac 1 2 (1,1,-1,-1)$.  Thus the above computation shows that for
  any point $x\in (V_0)_\tau$ with $\abs{x_\alpha} \ne \abs{x_\beta}$,
  the isotropy Lie algebra is isomorphic to $\su(2)\times \su(2)$ and
  generically the corresponding stratum of relative equilibria of the
  same isotropy type has the dimension
  \begin{equation*}
    \dim G -\dim G_x +(\LT')^L= 24-6+2 = 20,
  \end{equation*}
  where $L\subset G$ is given by the group with Lie algebra
  $\LT \oplus \langle X_1, X_2, Y_1, Y_2\rangle$.
  
  If $x\in (V_0)_\tau$ and $\abs{x_\alpha} = \abs{x_\beta}$, then
  $\LG_x \cong \mathfrak{so}(5)$ and the stratum an $x$ generically
  corresponds to a relative equilibrium contained in an isotropy type
  stratum of dimension
   \begin{equation*}
    \dim G -\dim G_x +(\LT')^{\SU(4)\times S^1}= 24-10+1 = 15,
  \end{equation*}
  where we identify $\SU(4)$ with the image of the above embedding
  $\SU(4) \to \SU(5)$ and $S^1$ with the group $\exp(t\cdot
  \diag(1,1,1,1,-4))$, $t\in \R$.
\end{example}
In the next example, we start again with a given group $G$, the
$G$-representation $V^c$ and a given subspace $V_0\subset V^c$. The
example illustrates the proceeding to compute the isotropy Lie algebras:
\begin{example}\label{ex:combined}
  Consider $G=\SU(6)$ and let $V^c$ be the complexified adjoint
  representation of $G$. Again, we identify $\LT$ and $\LT^*$ with the
  subspace of $\R^6$ of the vectors whose entries sum up to $0$.
  Let $V_0$ the sum of weight spaces
  corresponding to the three linearly independent roots
  $\alpha:=(1,-1,0,0,0,0)$, $\beta:=(0,0,1,-1,0,0)$, and
  $\gamma:=(0,0,0,0,1,-1)$. 

  Note that the affine span of $\alpha$, $\beta$, and $\gamma$
  contains the point
  \begin{equation*}
    \nu := \frac 1 3(1,-1,1,-1,1,-1).
  \end{equation*}
  Its coadjoint isotropy subgroup is given by
  $G_\nu\simeq \SU(3) \times \SU(3) \times S^1$, where the first
  $\SU(3)$-factor is given by the matrices whose non-vanishing entries
  have odd row and column numbers, the second factor is given by the
  matrices whose non-vanishing entries have even row and column
  numbers and the $S^1$-component is given by matrices of the form
  $\diag(\theta, \bar \theta, \theta, \bar \theta, \theta, \bar
  \theta)$, $\abs{\theta}=1$.

  Since $(\LT^*)^{G_\nu} = \langle \nu \rangle$ intersects the affine
  span of $\alpha$, $\beta$ and $\gamma$ in the single point $\nu$ and
  is orthogonal to it, all points of $(V_0)_\tau$ have isotropy
  subgroups contained in $G_\nu$.

  For all $x \in (V_0)_\tau$ with pairwise different $\abs{x_\alpha}$,
  $\abs{x_\beta}$, and $\abs{x_\gamma}$ the coadjoint isotropy
  subgroup $G_\mu$ of $\mu := \J(x)$ coincides with $T$ and hence
  $\LG_x = \LT_x$.  The corresponding stratum has the dimension
  \begin{equation*}
    35-2+3=36.
  \end{equation*}
  To compute the isotropy Lie algebras of points of $(V_0)_\tau$ that
  have at least two equal values among $\abs{x_\alpha}$,
  $\abs{x_\beta}$, and $\abs{x_\gamma}$, we consider the action of the
  root vectors of $G_\nu\simeq \SU(3) \times \SU(3) \times
  S^1$. Consider $\{X_{1,i},Y_{1,i},X_{2,i},Y_{2,i}\}_{i=1,2,3}$,
  where $X_{1,i}, Y_{1,i}$ are the images of the corresponding root
  vectors $X_i, Y_i$ of $\SU(3)$ (see Example~\ref{ex:pyramid}) with
  respect to the embedding $A \mapsto \bar A_1$ of $\su(3)$ into the
  first $\su(3)$-factor of $\LG_\nu$ given by
  $(\bar A_1)_{2i-1, 2j-1} = (A)_{i,j}$ . Similarly $X_{2,i}, Y_{2,i}$
  denote the image of the corresponding root vectors under the
  embedding of $\su(3)$ the second $\su(3)$-factor of $\LG_\nu$ given
  by $A \mapsto \bar A_2$ with $(\bar A_2)_{2i,2j} = (A)_{i,j}$.

  Now, we consider the neighbors of the weights of $V_0$ with respect
  to the roots of $G_\nu$, which are given by the positive roots
  \begin{align*}
    \rho_{1,1} = (1,0,-1,0,0,0),\quad &\rho_{1,2} = (0,0,1,0,-1,0), \quad
     &\rho_{1,3} = (1,0,0,0,-1,0),\\
    \rho_{2,1} = (0,1,0,-1,0,0),\quad &\rho_{2,2} = (0,0,0,1,0,-1), \quad
    &\rho_{2,3} = (0,1,0,0,0,-1).                                    
  \end{align*}
  and their additive inverses. There are no common neighbors of all
  three of $\alpha$, $\beta$, and $\gamma$. For each pair of weights
  of $V_0$ there are two weights which are neighbors of both. For
  example, the weights $\alpha:=(1,-1,0,0,0,0)$ and
  $\beta:=(0,0,1,-1,0,0)$ have the common neighbors $(0,-1,1,0,0,0)$
  and $(1,0,0,-1,0,0)$. Here, the difference vectors are given by the
  roots $\pm \rho_{1,1}$ and $\pm \rho_{2,1}$, which form a subroot
  system isomorphic to $A_1 \times A_1$, the root system of
  $\SU(2) \times \SU(2)$. Each of the corresponding root vectors
  $X_{1,1}$, $Y_{1,1}$, $X_{2,1}$ and $Y_{2,1}$ acts trivially on the
  weight space $\C_\gamma$. Since for $x \in (V_0)_\tau$ with
  $\abs{x_\alpha} = \abs{x_\beta} \ne \abs{x_\gamma}$ with
  $\mu := \J(x)$ these four roots form the root system of $G_\mu$, the
  $M_{ \rho_{1,1}} \oplus M_{\rho_{2,1}}$ part of $\LG_x$ is given by
  the solutions of a system of linear equations similar to that in
  Example~\ref{ex:cube}. Thus $\LG_x$ contains a unique subalgebra
  isomorphic to $\su(2)$ and the rank of $\LG_x$ is $2$. Thus $\LG_x$
  is isomorphic to $\su(2) \oplus \R$. Since all $x \in (V_0)_\tau$
  with $\abs{x_\alpha}= \abs{x_\beta}$ have the same isotropy type, we
  obtain that the corresponding stratum has the dimension
  \begin{equation*}
    35 - 4 + 2=33.
  \end{equation*}
  Moreover, since the Weyl group permutes the weight spaces, the
  points with $\abs{x_\alpha} = \abs{x_\gamma} \ne \abs{x_\beta}$ or
  $\abs{x_\alpha} \ne\abs{x_\gamma} =\abs{x_\beta}$ have a point with
  $\abs{x_\alpha} = \abs{x_\beta} \ne \abs{x_\gamma}$ in its $G$-orbit
  and are hence contained in the same isotropy stratum.

  The last case is $\abs{x_\alpha}=
  \abs{x_\beta}=\abs{x_\gamma}$. Then $\J(x) = \nu$ and we have to
  consider all roots of $G_\nu$. As pointed out above, each two of the
  weights $\alpha$, $\beta$ and $\gamma$ have two common neighbors,
  but there is no common neighbor of all three weights. In addition,
  $\LG_\nu\otimes \C$ is a direct sum of $\LT\otimes \C$ and the three
  subspaces
  $\mathfrak k:=\langle X_{1,1}, Y_{1,1},X_{2,1}, Y_{2,1}\rangle$,
  $\mathfrak l:=\langle X_{1,2}, Y_{1,2},X_{2,2}, Y_{2,2}\rangle$ and
  $\mathfrak m:\langle X_{1,3}, Y_{1,3},X_{2,3}, Y_{2,3}\rangle$. The
  matrices in $\mathfrak k$ map the weight spaces of $\alpha$ and of
  $\beta$ the weight spaces of their common neighbors and vanish on
  $\C_\gamma$. Similarly, the image of the elements of $\mathfrak l$
  applied to $V_0$ is contained in the span of the weight spaces
  corresponding to the common neighbors of $\beta$ and $\gamma$, and
  the image of $V_0$ under the action of the elements of $\mathfrak m$
  is contained in the span of the weight spaces corresponding to the
  common neighbors of $\alpha$ and $\gamma$. Hence a vector
 \begin{equation*}
   Z = Z_\LT + Z_{\mathfrak k} + Z_{\mathfrak l} + Z_{\mathfrak m}
   \in \LT \oplus \mathfrak k \oplus \mathfrak l \oplus \mathfrak m
 \end{equation*}
 satisfies $Zx = 0$ iff this holds for each of the components $Z_\LT$,
 $Z_{\mathfrak k}$, $Z_{\mathfrak l}$, $Z_{\mathfrak m}$. In addition,
 \begin{equation*}
   \LG_\nu = \LT \oplus (\mathfrak k \cap   \LG_\nu)
   \oplus (\mathfrak l \cap   \LG_\nu)
   \oplus (\mathfrak m \cap  \LG_\nu),
 \end{equation*}
 and the solution set of $\xi x = 0$ within each of the subspaces
 $(\mathfrak k \cap \LG_\nu)$,$(\mathfrak l \cap \LG_\nu)$,
 $(\mathfrak m \cap \LG_\nu)$ corresponds to a linear system similar
 to the one in Example~\ref{ex:cube}.  Hence $\LG_x$ is an
 $8$-dimensional Lie algebra of rank $2$ and contains three Lie
 subalgebras isomorphic to $\su(2)$.

 Obviously, $\LT_x$ is the 2-dimensional subspace of $\LT$ given by
 elements of the form $(a,a, b,b,c,c)$, $a,b,c \in \R, a+b+c =
 0$. Moreover, consider the positive roots
 $\rho_{1,1} =(1,0,-1,0,0,0)$ and $\rho_{2,1}=(0,1,0,-1,0,0)$ that
 correspond to the subspace $\mathfrak k$. If we evaluate these roots
 at elements of $\LT_x$, we obtain the result $(a-b)$ for both of
 them. Thus $(a,a, b,b,c,c) \mapsto (a-b)$ is a root of $G_x^\circ$
 with a root vector contained in $\mathfrak k$. Similarly, the
 intersections $\mathfrak l \cap \LG_x$ and $\mathfrak m \cap \LG_x$
 correspond to the pairs of roots that map elements of the above form
 to $\pm(a-c)$ and $\pm(b-c)$ respectively. Thus $\LG_x$ is isomorphic
 to $\su(3)$.

  The corresponding isotropy stratum has the dimension
  \begin{equation*}
    35-8+2=29.
  \end{equation*}
\end{example}

We know briefly sketch an approach to compute the isotropy types on
the Lie algebra level within the branches of relative equilibria that
we obtain from Theorem~\ref{thm:main-thesis} in general:

We have to consider all sums of weight spaces corresponding to a
linearly independent subset $S$ of the weights of $V^c$ such that each
subset $S_i$ of $S$ consisting of the weights of $S$ contained in an
irreducible $G$-subrepresentation is maximal within the sum of its
affine span and the underlying subspaces of the affine spans of the
other subsets $S_j \subset S$ of this kind. To determine the types of
the isotropy Lie algebras, it is reasonable to start with the complex
1-dimensional spaces. Then we increase the complex dimension step by
step. The advantage of this proceeding is that we can often reuse
calculations for lower dimensional subspaces as we have seen for
instance in Example~\ref{ex:combined}.

Since the isotropy subgroup of $x \in V_c$ coincides with the
intersection of the isotropy subgroups of the components of $x$ in the
$G$-irreducible $\der^2h(0)|_{V_c}$-eigenspaces, w.~l.~o.~g. we assume
in the following that $V^c$ is $G$-irreducible.

Obviously, all non-zero points within a weight space $\C_\alpha$ are
of the same isotropy type. They even have the same isotropy Lie
algebras: For $0\ne x \in \C_\alpha$, the complexified isotropy Lie
algebra $\C \otimes \LG_x$ is given by the span of $\LT_x \otimes \C$
and the root vectors $Z$ corresponding to roots $\rho$ of $\LG_\alpha$
with $Z \left(\C_\alpha\right) =0$, which holds iff $\alpha+\rho$ is
not a weight of $V^c$. If $\rho$ is a root of $\LG_\alpha$, then $\alpha+\rho$
is a weight of $V^c$ iff this holds for $\alpha-\rho$. Recall that we
denote the weight space in $\LG\otimes \C$ corresponding to a root
$\rho$ by $L_\rho$ and the real space
$(L_\rho\oplus L_{-\rho}) \cap \LG$ by $M_\rho=M_{-\rho}$.  Then
$\LG_x$ coincides with the sum of $\LT_x$ and the spaces $M_\rho$ such
that $\rho$ is a root of $\LG_\alpha$ and $\alpha + \rho $ is not a
weight of $V^c$.

To compute the isotropy algebras of the elements of $(V_0)_\tau$ for
$V_0= \sum_{\alpha \in S}\C_\alpha$, we have to consider all coadjoint
isotropy subgroups $K$ of elements of $\LT^*$ with the following
property (see Remark~\ref{rem:characterization_L_part2}): $S$ splits
into subsets $S_i$ whose convex hulls contain a single point of
$(\LT^*)^K$, which is an inner point if S has more than one point,
and whose affine spans are orthogonal to $(\LT^*)^K$. To compute the
Lie algebra $\LG_x = \mathfrak k_x$ we only need to intersect the
isotropy Lie algebras of the components $x_i$ of $x$ within
$ \sum_{\alpha \in S_i}\C_\alpha$. If the partition contains more than
one set, then these isotropy Lie algebras have been determined in
previous steps.

Thus to complete the computation of the types of isotropy Lie algebras
of the elements of $(V_0)\tau$, we only have to calculate the groups
$K$ of this type such that $((\LT)^*)^K$ intersects the convex hull of
$S$ orthogonally in a single point $\nu$. If there is such a group $K$, then it is
maximal within all coadjoint isotropy subgroups of elements of $\LT^*$
that intersect the affine span of $S$ orthogonally. Thus there is at
most one such group.

Moreover, the set of points in $V_0$ with momentum $\nu$ consists of a
single $T$-orbit. Hence to compute the types of their isotropy Lie
algebras, we can choose a single $x$ with $\J(x) = \nu$ and compute
$\LG_x$.

To compute $\LG_x$, we proceed similarly as in our examples: We split
$\LG_\nu$ into $\LT$ and the span of spaces $M_\rho$ for the roots
$\rho$ of $G_\nu$. No two weights of $V_0$ are neighbors of each
other: If the difference of two weights of $V_0$ was a root, then
their affine span would contain a Weyl reflection pair and hence this
would hold for $S$. Thus $(\LG_\nu\otimes \C)_x$ coincides with the
sum of $(\LT \otimes T)_x$ and the annihilator of $x$ within the span
of the spaces $L_\rho$ for the roots $\rho$ of $G_\nu$. Therefore
$\LG_x$ is given by the sum of $\LT_x$ and the space of solutions
$\xi$ in the span of the spaces $M_\rho$ of the equation $\xi x=0$.
To determine the solutions $\xi$, we choose a basis of each space
$M_\rho$. (Since the each of the spaces $M_\rho$ is a real vector
space of dimension 2, we have to choose 2 vectors for each root $\rho$
of $\LG_\mu$. In our examples, we have chosen vectors of the form
$X_i - Y_i$ and $\I(X_i+Y_i)$, where $X_i$ and $Y_i$ are root vectors
for $\rho$ and $-\rho$ respectively.) Then together, these vectors
form a basis $Z_1, \dots, Z_{2k}$ of the span of the spaces
$M_\rho$. We then consider the matrix $A$ with columns $(Z_ix)$. Then
$\ker A \subset \R^{2k}$ corresponds to the space of solutions $\xi$
of $\xi x=0$ within the span of the spaces $M_\rho$.

As we have seen in Example~\ref{ex:combined}, it can happen that the
linear system decouples into subsystems that have been solved in order
to determine the isotropy Lie algebras of the main stratum of a
subspace of $V_0$. This is another reason for our proceeding.

\section*{Acknowledgments}
I thank Reiner Lauterbach for reading a preliminary version of this article and
giving helpful advice.

\printbibliography
\end{document}